\DeclareMathOperator{\C}{\mathcal{C}}
\DeclareMathOperator{\rk}{rk}
\newtheorem{theorem}{Theorem}[section]
\newtheorem{lemma}[theorem]{Lemma}
\newtheorem{corollary}[theorem]{Corollary}
\newtheorem{definition}[theorem]{Definition}
\newtheorem{proposition}[theorem]{Proposition}
\newtheorem{remark}[theorem]{Remark}
\newtheorem{construct}[theorem]{Construction}
\newcommand{\fqn}{\mathbb{F}_{q^n}}
\newcommand{\F}{{\mathbb F}}
\newcommand{\fq}{{\mathbb F}_{q}}
\newcommand{\la}{\langle}
\newcommand{\ra}{\rangle}
\newcommand{\PG}{\mathrm{PG}}
\newcommand\qbin[3]{\left[\begin{matrix} #1 \\ #2 \end{matrix} \right]_{#3}}
\newcommand{\classcode}{\mathcal{C}[n,k,d]_q}
\newcommand{\classsystem}{\mathcal{P}[n,k,d]_q}
\newcommand{\vspan}[1]{\left \langle #1 \right \rangle}
 \newcommand{\set}[1]{ \left \{ #1 \right \} }
\newcommand{\sam}[1]{\textcolor{purple}{#1}}
\title[Cones from maximum $h$-scattered linear sets and a stability result]{Cones from maximum $h$-scattered linear sets and a stability result for cylinders from hyperovals}
\author[S. Adriaensen]{Sam Adriaensen}
\email{sam.adriaensen@vub.be}
\author[J. Mannaert]{Jonathan Mannaert}
\address{Sam Adriaensen and Jonathan Mannaert, \textnormal{Department of Mathematics and Data Science, Vrije Universiteit Brussel, Brussels, Belgium}}
\email{jonathan.mannaert@vub.be}
\author[P. Santonastaso]{Paolo Santonastaso}
\email{paolo.santonastaso@unicampania.it}
\author[F. Zullo]{Ferdinando Zullo}
\address{Paolo Santonastaso and Ferdinando Zullo, \textnormal{Dipartimento di Matematica e Fisica, Universit\`a degli Studi della Campania ``Luigi Vanvitelli'', Viale Lincoln, 5, I--\,81100 Caserta, Italy}}
\email{ferdinando.zullo@unicampania.it}
\date{}
\begin{document}

\maketitle

\begin{abstract}
    This paper mainly focuses on cones whose basis is a maximum $h$-scattered linear set. We start by investigating the intersection sizes of such cones with the hyperplanes. Then we analyze two constructions of point sets with few intersection sizes with the hyperplanes. In particular, the second one extends the construction of translation KM-arcs in projective spaces, having as part at infinity a cone with basis a maximum $h$-scattered linear set. As an instance of the second construction we obtain cylinders with a hyperoval as basis, which we call \emph{hypercylinders}, for which we are able to provide a stability result. The main motivation for these problems is related to the connections with both Hamming and rank distance codes. Indeed, we are able to construct codes with few weights and to provide a stability result for the codes associated with hypercylinders.
\end{abstract}

\bigskip
{\it AMS subject classification (2020):} 51E20; 51E21; 94B05.

\bigskip
{\it Keywords:} linear set;  scattered linear set; Hamming metric code; rank metric code.

\section{Introduction}

Scattered linear sets (and more generally scattered spaces) were defined and investigated for the first time in 2000 by Blokhuis and Lavrauw in \cite{blokhuis2000scattered}. Since their introduction, scattered linear sets have found fertile ground in Galois geometries and in coding theory, see e.g.\ \cite{lavrauw2016scattered,polverino2020connections}.
In this paper we will mainly focus our attention on maximum $h$-scattered linear sets.
Let $V$ be an $r$-dimensional vector space over $\fqn$ and let $\Lambda=\PG(V,\F_{q^n})=\PG(r-1,q^n)$ be the associated projective space.
If $U$ is a $k$-dimensional $\fq$-subspace of $V$, then the set of points
\[ L_U=\{\la {u} \ra_{\mathbb{F}_{q^n}} : { u}\in U\setminus \{{ 0} \}\}\subseteq \Lambda \]
is said to be an $\fq$-\textbf{linear set of rank $k$}.
An important notion related to linear sets is \textbf{the weight of a subspace} $\Omega$ with respect to $L_U$, which is a measure of how much of the linear set is contained in $\Omega$. 
If all the $(h-1)$-dimensional projective subspaces have weight at most $h$, then $L_U$ is said to be \textbf{$h$-scattered}; see \cite{csajbok2021generalising}. When $h=1$ this exactly coincides with the notion introduced by Blokhuis and Lavrauw in \cite{blokhuis2000scattered} and when $h=r-1$ it coincides with the notion introduced by Sheekey and Van de Voorde in \cite{sheekeyVdV}. 
The rank of an $h$-scattered linear set in $\PG(r-1,q^n)$ is bounded by $rn/(h+1)$ and an $h$-scattered linear set with this rank is called a \textbf{properly maximum $h$-scattered linear set}. 
For these linear sets the intersection numbers with respect to the hyperplanes are known (see \cite{blokhuis2000scattered,csajbok2021generalising,zini2021scattered}) and interestingly they take exactly $h+1$ distinct values.
In this paper we will first study cones having as basis a properly maximum $h$-scattered $\fq$-linear set in a complementary space to the vertex. 
The possible intersection sizes of such a set with a hyperplane can be easily derived from the intersection numbers of the basis with respect to hyperplanes.
Then we exploit two constructions of point sets in $\PG(r-1,q^n)$ which arise from cones of properly maximum $h$-scattered linear sets. More precisely, let $L_U$ be a cone with basis a properly maximum $h$-scattered linear set contained in a hyperplane $\pi_{\infty}$ of $\PG(r,q^n)$ and let $P=\langle v \rangle_{\fqn}\in \PG(r,q^n)\setminus \pi_{\infty}$. Then we can consider the following two point sets:
\begin{enumerate}
    \item $\mathcal{B}=L_{U'}$, where $U'=U\oplus \langle v \rangle_{\fq}$;
    \item $\mathcal{K}=(\pi_{\infty}\setminus L_U)\cup (\mathcal{B} \setminus \pi_{\infty})=(\pi_{\infty}\setminus \mathcal B)\cup (\mathcal B \setminus \pi_{\infty})$.
\end{enumerate}
The second construction can be seen as a generalization of the construction of \emph{translation KM-arcs}, which are point sets in $\PG(2,2^n)$ of the projective plane that can be all obtained by the second construction replacing the cone with a special type of linear set of rank $n$ on the line at infinity (known as a \emph{club}) with $q=2$, see \cite[Theorem 2.1]{deboeck2016linear}.
For both of the constructions we determine the possible intersection sizes with the hyperplanes, which are strongly related to the intersection sizes with the hyperplanes of the chosen properly maximum $h$-scattered linear set, obtaining \emph{sets with few intersection numbers} with respect to the hyperplanes; see \cite{de2011constructions}.
As a special instance of Construction (2) we obtain the \textbf{hypercylinder}, that is a cone with as basis a hyperoval, and as vertex a subspace of codimension 3, where the vertex is then deleted. We prove a stability result for hypercylinders obtaining that when considering a point set with size close to the size of a hypercylinder and with at most three possible intersection sizes with the hyperplanes, then it necessarily is a hypercylinder. The main tool regards some results on KM-arcs proved by Korchm\'aros and Mazzocca in \cite{korchmaros1990on} and two results of Calkin, Key and de Resmini in \cite{calkin1999minimumweight} on even sets.

The main motivation for studying these point sets is certainly related to coding theory. Indeed, using the well-known correspondence between projective systems (or systems) and Hamming metric codes (respectively rank metric codes), we are able to provide constructions of codes with \emph{few weights} in both Hamming and rank distances and to provide a stability result for the codes associated with the hypercylinder (in the Hamming metric). 
The latter codes deserve attention as they present only three nonzero weights.

The paper is organized as follows. In Section \ref{sec:prel} we discuss some preliminaries that will be useful later on. These focus mostly on linear sets, $h$-scattered linear sets, even sets and KM-arcs. In Section \ref{sec:aux} we prove some results on linear sets which are frequently used in the paper and regard the size of certain families of linear sets.
Section \ref{sec:constr} is mainly devoted first to the study of cones with basis a properly maximum $h$-scattered linear set and then to the determination of the intersection sizes of the hyperplanes with both Constructions (1) and (2).
As an instance of Construction (2) we obtain the hypercylinders. In Section \ref{sec:stab} we provide a stability result for hypercylinders, making use of combinatorial techniques and some combinatorial results on KM-arcs. In Section \ref{sec:codes}, after describing the connections between Hamming/rank metric codes and projective systems/systems, we are able to construct codes with few weights and to provide a stability result for those codes arising from hypercylinders. This is indeed a consequence of the results obtained in the previous sections. Finally, we conclude the paper with Section \ref{sec:final} in which we summarize our results and list some open problems/questions.

\section{Preliminaries}\label{sec:prel}

We consider the projective space $\PG(r,q)$, with $r\geq 2$ and $q$ a prime power, unless otherwise stated.

\begin{proposition}[{\cite[Theorem 3.1.1]{hirschfeld1979}}] \label{prop:numbersubapcescontaining}
The number of $k$-spaces in $\PG(r,q)$ containing a fixed $h$-space is 
\[
\qbin{r-h}{k-h}{q}=\frac{(q^{r-h}-1)(q^{r-h-1}-1)\cdots (q^{r-k+1}-1)}{(q^{k-h}-1)(q^{k-h-1}-1)\cdots (q-1)}
\]
\end{proposition}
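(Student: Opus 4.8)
\section*{Proof proposal}

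The plan is to convert this projective statement into a linear-algebra count and then enumerate suitable ordered tuples of vectors. Realise the fixed $h$-space and the $k$-spaces through it as the projective images of subspaces $\Sigma \subseteq W$ of $V = \mathbb{F}_q^{r+1}$ with $\dim\Sigma = h+1$ and $\dim W = k+1$; a $k$-space of $\PG(r,q)$ contains the fixed $h$-space exactly when the corresponding $(k+1)$-dimensional subspace of $V$ contains the fixed $(h+1)$-dimensional subspace $\Sigma$. So it suffices to count the number $N$ of $(k+1)$-dimensional subspaces of $V$ that contain $\Sigma$.

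First I would fix an ordered basis $e_1,\dots,e_{h+1}$ of $\Sigma$ and count ordered completions $e_1,\dots,e_{h+1},v_{h+2},\dots,v_{k+1}$ to a basis of a $(k+1)$-dimensional subspace: $v_{h+2}$ may be any vector outside $\Sigma$, and in general, once $v_{h+2},\dots,v_j$ have been chosen, the span of $\Sigma\cup\{v_{h+2},\dots,v_j\}$ has dimension $j$, so $v_{j+1}$ has $q^{r+1}-q^{j}$ admissible values. This gives $\prod_{j=h+1}^{k}(q^{r+1}-q^{j})$ ordered completions. Each $(k+1)$-dimensional subspace $W \supseteq \Sigma$ arises from exactly the ordered bases of $W$ extending $e_1,\dots,e_{h+1}$, and the identical count carried out inside $W$ (replacing $r+1$ by $k+1$) shows there are $\prod_{j=h+1}^{k}(q^{k+1}-q^{j})$ of those. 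Hence
\[
N=\frac{\prod_{j=h+1}^{k}(q^{r+1}-q^{j})}{\prod_{j=h+1}^{k}(q^{k+1}-q^{j})}=\prod_{j=h+1}^{k}\frac{q^{r+1-j}-1}{q^{k+1-j}-1},
\]
where in each of the $k-h$ paired factors the common power $q^{j}$ has been cancelled from numerator and denominator. Writing out the factors for $j=h+1,\dots,k$ then yields
\[
N=\frac{(q^{r-h}-1)(q^{r-h-1}-1)\cdots(q^{r-k+1}-1)}{(q^{k-h}-1)(q^{k-h-1}-1)\cdots(q-1)}=\qbin{r-h}{k-h}{q},
\]
the last equality being the definition of the Gaussian binomial coefficient after the substitution $i=j-h$.

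There is no genuine obstacle here; the only points requiring a modicum of care are the bookkeeping of the index ranges — one must check that both products have exactly $k-h$ factors so that the powers of $q$ cancel pairwise and the top and bottom parameters come out as $r-h$ and $k-h$ respectively — and the observation that the ``span'' map from ordered completions to subspaces has constant fibre size, which is precisely what legitimises the division. A slicker alternative would be to pass to the quotient by $\Sigma$: subspaces of $V$ containing $\Sigma$ correspond bijectively to subspaces of $V/\Sigma\cong\mathbb{F}_q^{r-h}$ with the dimension dropping by $h+1$, so $N$ equals the number of $(k-h)$-dimensional subspaces of an $(r-h)$-dimensional space, reducing the whole claim to the special case $h=-1$ (the total count of $(k+1)$-dimensional subspaces of $\mathbb{F}_q^{r+1}$), which the ordered-basis argument above settles directly.
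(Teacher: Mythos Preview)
Your argument is correct and is one of the standard proofs of this classical fact: lifting to the underlying vector space and double-counting ordered basis completions (or, equivalently, passing to the quotient $V/\Sigma$) gives exactly the Gaussian binomial $\qbin{r-h}{k-h}{q}$; the bookkeeping you flag is the only thing to check, and you have it right.

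There is nothing to compare against here: the paper does not prove this proposition at all but simply quotes it from Hirschfeld \cite[Theorem 3.1.1]{hirschfeld1979}. Your write-up would serve perfectly well as a self-contained replacement for that citation.
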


We will denote by $[k+1]_q$ the number of points of a $k$-space of $\PG(r,q)$, i.e.
\[
[k]_q=\frac{q^{k}-1}{q-1}.
\]

In the paper we will frequently use the following notions.

\begin{definition}
Let $\mathcal K$ be a set of points in $\mathrm{PG}(r,q)$. Suppose that there exist $s$ positive integers $m_1 < \cdots < m_s$ such that for every $k$-space $\sigma$, with $k\geq 1$ fixed,
$$\vert \sigma \cap {\mathcal K}\vert \in \{m_1, \ldots, m_s\}.$$
Then we say that $\mathcal{K}$ is of \textbf{type} $\{m_1, \ldots, m_s\}_k$.
In case $k=1$ and all the $m_i$'s are even, $\mathcal K$ is called an \textbf{even set}.
If each of the integers $m_i$ occurs as the size of the intersection of $\mathcal K$ with a $k$-space of $\mathrm{PG}(r, q)$, we say that $\mathcal{K}$ is of  \textbf{type} $(m_1, \ldots, m_s)_k$, and we call the $m_i$ the \textbf{intersection numbers}. 
\end{definition}

We note that if $\mathcal K$ is an even set in $\PG(r,q)$, then either $q$ is odd and $\mathcal K \in \{\emptyset, \PG(r,q)\}$, or $q$ is even and $\mathcal K$ intersects every subspace of dimension at least 1 in an even number of points.

\subsection{KM-arcs}  

Ovals and hyperovals are well studied objects in finite geometries.

\begin{definition}
Suppose that $\mathcal{O}$ is a set of points in $\PG(2,q)$ such that no three points are collinear. Then $\mathcal{O}$ is called an \textbf{oval} of $\PG(2,q)$ if it has $q+1$ points, and a \textbf{hyperoval} if  it has $q+2$ points. 
\end{definition}

It can be seen that every line intersects an oval in $0$, $1$, or $2$ points and  every line intersects a hyperoval in $0$ or $2$ points, and all of these cases occure. This makes an oval a set of type $(0,1,2)_1$ and a hyperoval a set of type $(0,2)_1$.

The standard example of an oval is a \textbf{conic}.
Up to the action of $\mathrm{PGL}(3,q)$, there is a unique conic, namely the solutions over $\mathbb{F}_{q}^3$ to the equation $Y^2 = X Z$.
Moreover some classification results for ovals are known. One of which was proven by Segre in 1955.
\begin{theorem}[{\cite[Theorem 1]{segre_1955}}]
Suppose that $q$ is odd.
Then every oval in $\PG(2,q)$ is a conic.
Consequently, there are no hyperovals in $\PG(2,q)$.
\end{theorem}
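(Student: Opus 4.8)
The statement has two parts, the second of which I would derive from the first. Suppose $q$ is odd and $\mathcal{H}$ were a hyperoval in $\PG(2,q)$; deleting a point $N$ from $\mathcal{H}$ leaves an oval $\mathcal{O}$, which by the first part is a conic $\mathcal{C}$. Since $\mathcal{H}=\mathcal{C}\cup\{N\}$ is still an arc, no secant of $\mathcal{C}$ passes through $N$, so each of the $q+1$ lines through $N$ meets $\mathcal{C}$ in at most one point; as these lines cover all $q+1$ points of $\mathcal{C}$, each meets $\mathcal{C}$ in exactly one point and is therefore tangent to it. Hence all $q+1$ tangents of $\mathcal{C}$ are concurrent at $N$, i.e.\ $N$ is a nucleus of $\mathcal{C}$. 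But for $q$ odd a conic has no nucleus: up to a projectivity $\mathcal{C}$ is $Y^2=XZ$, the tangent at $(1,t,t^2)$ is $t^2X-2tY+Z=0$, and already the tangents for $t\in\{0,1,-1\}$ are not concurrent when $\mathrm{char}(\mathbb{F}_q)\neq 2$. This contradiction disposes of hyperovals, so it remains to prove that every oval $\mathcal{O}$ in $\PG(2,q)$, $q$ odd, is a conic.

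The engine of the proof is Segre's lemma of tangents \cite{segre_1955}. Because $q$ is odd, through each point $P\in\mathcal{O}$ there pass $q$ secants (one per other point of $\mathcal{O}$) and hence a \emph{unique} tangent line $t_P$. Fix three points of $\mathcal{O}$ and choose coordinates so that they are $A_1=(1,0,0)$, $A_2=(0,1,0)$, $A_3=(0,0,1)$; since no three points of $\mathcal{O}$ are collinear, the remaining $q-2$ points are $P_j=(1,b_j,c_j)$ with all $b_j,c_j$ nonzero, and $t_{A_1}\colon X_3=a_1X_2$, $t_{A_2}\colon X_1=a_2X_3$, $t_{A_3}\colon X_2=a_3X_1$ for some $a_i\in\mathbb{F}_q^{\times}$. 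The lemma asserts $a_1a_2a_3=-1$. I would prove it by reading off, for each vertex, the ``slopes'' of the $q+1$ lines through it: through $A_1$ these are $0$ (the side $A_1A_2$), $\infty$ (the side $A_1A_3$), $a_1$ (the tangent) and $c_j/b_j$ (the secants $A_1P_j$), and since they exhaust $\mathbb{P}^1(\mathbb{F}_q)$ we get $a_1\prod_j(c_j/b_j)=\prod_{x\in\mathbb{F}_q^{\times}}x=-1$; the analogues at $A_2$ and $A_3$ read $a_2\prod_j(1/c_j)=-1$ and $a_3\prod_j b_j=-1$, and multiplying the three identities makes the $b_j,c_j$ cancel, leaving $a_1a_2a_3=-1$.

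From this I would extract the geometric crux. A direct computation shows that the unique conic through $A_1,A_2,A_3$ tangent to $t_{A_1}$ at $A_1$ and to $t_{A_2}$ at $A_2$ is $\mathcal{C}\colon a_1X_1X_2-a_1a_2X_2X_3-X_1X_3=0$, and that its tangent at $A_3$ has slope $-1/(a_1a_2)$; by the lemma of tangents this equals $a_3$, so $\mathcal{C}$ is also tangent to $t_{A_3}$ at $A_3$. In other words, for \emph{any} three points of $\mathcal{O}$, the conic through them sharing the oval's tangent at two of the points automatically shares it at the third as well; this exhibits a well-defined nondegenerate conic $\kappa$ attached to each unordered triple of points of $\mathcal{O}$.

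What remains — turning this local agreement everywhere into the global identity $\mathcal{O}=\mathcal{C}$ — is the heart of the matter, and I expect it to be the main obstacle. One route is combinatorial: fix $A,B\in\mathcal{O}$ and work inside the pencil of conics through $A,B$ tangent to $t_A$ at $A$ and to $t_B$ at $B$; it has $q+1$ members, two of which ($t_A\cup t_B$ and the double line $(AB)^2$) are degenerate, and each of the $q-1$ points of $\mathcal{O}\setminus\{A,B\}$ lies on a unique member, necessarily one of the $q-1$ nondegenerate ones, which by the previous paragraph is tangent to $\mathcal{O}$ there. Since distinct members of the pencil meet only in the base locus $\{A,B\}$, and since two conics cannot be mutually tangent at three common points, one can play the pencils based at $\{A,B\}$, $\{A,C\}$, $\{B,C\}$ against each other to force all $q-1$ points onto a single nondegenerate member — a conic containing all $q+1$ points of $\mathcal{O}$, hence $\mathcal{O}$ itself. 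An alternative, more computational route substitutes $P_j=(1,b_j,c_j)$ into $\mathcal{C}$, reduces membership to the identity $a_1b_j=c_j/(1-a_2c_j)$, and proves it by applying the lemma of tangents to the triangles $A_1A_2P_j$, $A_1A_3P_j$, $A_2A_3P_j$ and eliminating the slope of $t_{P_j}$. Either way, this is precisely where the finiteness of $\PG(2,q)$ and the hypothesis that $q$ is odd — used above in the existence and uniqueness of the tangent lines and in the non-existence of a nucleus — are genuinely needed, and the bookkeeping with families of conics is delicate; for $q$ even the statement fails (hyperovals exist), so some input of this kind is unavoidable.
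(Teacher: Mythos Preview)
The paper does not prove this theorem: it is quoted verbatim as \cite[Theorem 1]{segre_1955} and used as a black box, with no proof or sketch given. There is therefore nothing in the paper to compare your attempt against.

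For what it is worth, your write-up follows Segre's original strategy closely: the derivation of the hyperoval statement from the oval statement is clean and correct, and your proof of the lemma of tangents ($a_1a_2a_3=-1$) via the product of all nonzero field elements is exactly Segre's argument. Where your exposition becomes genuinely sketchy is the final globalisation step. The phrase ``one can play the pencils based at $\{A,B\}$, $\{A,C\}$, $\{B,C\}$ against each other'' is not yet a proof, and the pencil you describe (conics through $A,B$ tangent to $t_A,t_B$ there) has $q-1$ nondegenerate members while $\mathcal{O}\setminus\{A,B\}$ also has $q-1$ points, so a naive counting argument only gives a bijection, not that all points land on the \emph{same} member. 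Segre's own closure is different and cleaner: he uses the lemma of tangents to show that the triangle $A_1A_2A_3$ and the triangle of tangent lines $t_{A_1},t_{A_2},t_{A_3}$ are in perspective, and then a direct coordinate computation (essentially your ``alternative, more computational route'') shows every further point $P_j$ satisfies the equation of the conic $\mathcal{C}$ determined by $A_1,A_2,A_3$ and their tangents. If you want a self-contained version, that computational route is the one to flesh out; the pencil heuristic, as stated, has a gap.
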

However, for $q > 4$ even, there exist other examples of ovals; see e.g.\ \cite{Brown}.
Furthermore, each oval in $\PG(2,q)$, $q$ even, can be extended to a hyperoval. Therefore, hyperovals always exist in $\mathrm{PG}(2,q)$ with $q$ even.

Next, we define the KM-arcs, first introduced and investigated by Korchm\'aros and Mazzocca in \cite{korchmaros1990on}.

\begin{definition}
A \textbf{KM-arc of type $t$} in $\PG(2,q)$ is a set of $q+t$ points of $\PG(2,q)$ of type $(0,2,t)_1$. 
\end{definition}
It is immediately clear that KM-arcs are in fact generalizations of ovals and hyperovals, by setting $t=1$, respectively $t=2$.
However this generalization also has some interesting properties.

\begin{proposition}[{\cite[Proposition 2.1]{korchmaros1990on}}]
 \label{prop:kmarcs}
Let $\mathcal{K}$ be a KM-arc of type $t$ in $\PG(2,q)$, then:
\begin{itemize}
    \item $t$ divides $q$;
    \item if $1 < t < q$, then $q$ is even.
\end{itemize}
\end{proposition}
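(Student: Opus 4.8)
The plan is to count incidences between the points of a KM-arc $\mathcal{K}$ of type $t$ and the lines of $\PG(2,q)$, splitting the lines according to which of the three intersection sizes $0$, $2$, $t$ they realize. Fix a point $P\in\mathcal{K}$ and look at the $q+1$ lines through $P$. Each such line meets $\mathcal{K}$ in either $2$ or $t$ points (the value $0$ is impossible since the line already contains $P$), say in $x$ lines of size $2$ and $y$ lines of size $t$. Then $x+y=q+1$ and, counting the points of $\mathcal{K}$ other than $P$ along these lines, $x(2-1)+y(t-1)=q+t-1$. Subtracting, $y(t-2)=t-2$, so if $t\ne 2$ we get $y=1$ and $x=q$: through every point of $\mathcal{K}$ there passes exactly one line of size $t$, and the other $q$ lines have size $2$. (When $t=2$ the arc is a hyperoval and the statement is vacuous for the first bullet since $t=2\mid q$ for $q$ even, and the dichotomy $1<t<q$ is not triggered.)

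Next I would use this to pin down the number of $t$-secants. Assume $1<t<q$. Each of the $q+t$ points of $\mathcal{K}$ lies on a unique $t$-secant, and each $t$-secant carries $t$ points of $\mathcal{K}$, so the number of $t$-secants is $(q+t)/t$; in particular $t\mid q+t$, equivalently $t\mid q$, which gives the first bullet. For the second bullet, I would take two distinct $t$-secants $\ell_1,\ell_2$ (there are $(q+t)/t\ge 2$ of them since $t<q$ forces $(q+t)/t=1+q/t\ge 2$) meeting in a point $R$. If $R\in\mathcal{K}$ then $R$ would lie on two $t$-secants, contradicting uniqueness; hence $R\notin\mathcal{K}$. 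Now count $\mathcal{K}$ on the $q+1$ lines through $R$: every line through $R$ meets $\mathcal{K}$ in $0$, $2$, or $t$ points, and we already have (at least) two lines of size $t$ through $R$. Writing $a$ for the number of $2$-secants and $b\ge 2$ for the number of $t$-secants through $R$, we get $2a+tb=q+t$. Reducing modulo $2$: if $q$ were odd, then since $t\mid q$ forces $t$ odd, we would need $tb\equiv q+t\pmod 2$, i.e.\ $b\equiv 0\pmod 2$ when $q+t$ is even — but $q$ odd and $t$ odd make $q+t$ even, so $b$ is even, which is consistent and does not yet give a contradiction. So I would instead argue via Segre's theorem: if $q$ is odd, a set of type $(0,2,t)_1$ with $t\ge 3$ cannot exist because any three of its points spanning a $t$-secant are collinear, whereas deleting appropriately one sees the underlying structure must contain an oval-like configuration; more cleanly, for $q$ odd every line meets $\mathcal{K}$ in an even or in $t$ points, and a short parity/counting argument on the $2$-secants through an external point shows $\mathcal{K}$ would have to be an oval or hyperoval, forcing $t\in\{1,2\}$ by Segre, contradicting $1<t<q$.

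The main obstacle is the second bullet: the incidence counts alone are parity-consistent when $q$ is odd, so the argument genuinely needs an extra ingredient. I expect the cleanest route is to show that for $q$ odd the complement behaves like an even set or to invoke Segre's classification directly after verifying that a KM-arc with $t>2$ in odd characteristic would yield an oval, which is impossible to extend; alternatively one can use that for $q$ odd a set of type $(0,2,t)_1$ with $t$ odd and $1<t<q$ would have $(q+t)/t$ pairwise-disjoint-on-$\mathcal{K}$ $t$-secants covering $\mathcal{K}$, and then count $t$-secants through an external point two ways to reach a contradiction with $t\mid q$ and $q$ odd. I would present the first bullet in full and handle the parity step for the second bullet carefully, flagging the use of Segre's theorem as the decisive input.
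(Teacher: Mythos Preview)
The paper does not supply a proof of this proposition; it is quoted verbatim from \cite{korchmaros1990on}. So there is no ``paper's own proof'' to compare against, and your attempt must be judged on its own merits.

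Your treatment of the first bullet is essentially correct. The pencil count through a point of $\mathcal K$ gives $y(t-2)=t-2$, so for $t\neq 2$ each point lies on a unique $t$-secant, and then $(q+t)/t\in\mathbb Z$ yields $t\mid q$. (For $t=2$ the arc is a hyperoval; the external-point count $2a=q+2$ already forces $q$ even, hence $2\mid q$. You gloss over this, but it is a one-line fix.)

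The second bullet, however, has a genuine gap. Your appeal to Segre is misplaced: Segre's theorem classifies \emph{ovals} in odd characteristic, and a KM-arc with $t>2$ has $q+t>q+2$ points, so it is never an oval or hyperoval and there is no way to ``reduce'' it to one. Your fallback suggestion --- counting $t$-secants through an external point --- is the right instinct, but you do not carry it out, and the bare parity relation $2a+tb=q+t$ is, as you observed, consistent for $q$ odd.

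The missing idea is to produce an external point lying on \emph{exactly one} $t$-secant. At such a point $b=1$, whence $2a=q$ and $q$ is even. To find it, fix a $t$-secant $m$. There are $q+1-t$ external points on $m$, while the remaining $q/t$ $t$-secants each meet $m$ in a single (external) point; so at most $q/t$ of the external points of $m$ lie on a second $t$-secant. Since $1<t<q$ gives $(q-t)(t-1)>0$, equivalently $q/t<q+1-t$, some external point of $m$ lies on no other $t$-secant, and you are done. This elementary pigeonhole step is what replaces your attempted invocation of Segre.
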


Recall that a KM-arc $\mathcal{K}$ in $\PG(2,q)$ is called a \textbf{translation} KM-arc if there exists a line $\ell$ of $\PG(2,q)$ such that the group of elations with axis $\ell$ and fixing $\mathcal{K}$ acts transitively on the points of $\mathcal{K}\setminus \ell$.

Finally, we list some known results on even sets in projective spaces, which have been stated and proved using a coding theoretical approach. The first regards a lower bound on the size of an even set with respect to the lines, whereas the second one is a characterization of those of minimum size.

\begin{theorem}[{\cite[Theorem 1]{calkin1999minimumweight}}]
 \label{Th:minimumsizecodeword}
Let $S$ be an even set in $\PG(r,q)$, q even. Then $\lvert S \rvert \geq q^{r-1}+2q^{r-2}$.
\end{theorem}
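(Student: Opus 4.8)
The plan is to argue by induction on $r$, with $S$ assumed nonempty (note that $S\ne\PG(r,q)$ is automatic, since for $q$ even a line meets $\PG(r,q)$ in the odd number $q+1$ of points). The base case $r=2$ is direct: picking $P\in S$, each of the $q+1$ lines through $P$ meets $S$ in an even, hence at least $2$, number of points, and these lines partition $\PG(2,q)\setminus\{P\}$, so $|S|\ge 1+(q+1)=q+2=q^{r-1}+2q^{r-2}$.

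For the inductive step let $r\ge 3$ and write $c=q^{r-2}+2q^{r-3}$. If a hyperplane $H$ meets $S$, then $S\cap H$ is a nonempty even set of $H\cong\PG(r-1,q)$ (every line of $H$ is a line of $\PG(r,q)$), so $|S\cap H|\ge c$ by the inductive hypothesis. Let $a$ be the number of hyperplanes disjoint from $S$. Counting incident pairs (point of $S$, hyperplane through it) in two ways gives $|S|\,[r]_q=\sum_H|S\cap H|\ge([r+1]_q-a)\,c$, and since $[r+1]_q=q[r]_q+1$ this already forces $|S|\ge qc=q^{r-1}+2q^{r-2}$ whenever $a\le 1$ (strictly when $a=0$).

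Suppose now $a\ge 2$. Fix a hyperplane $H_\infty$ disjoint from $S$ and view $S\subseteq\AG(r,q)=\PG(r,q)\setminus H_\infty$. For an $(r-2)$-dimensional subspace $\Pi\subseteq H_\infty$, the $q$ hyperplanes of $\PG(r,q)$ whose trace on $H_\infty$ is exactly $\Pi$ partition $\AG(r,q)$, hence partition $S$; so if all of them meet $S$, the sum $|S|=\sum|S\cap H|$ has $q$ summands each at least $c$, giving $|S|\ge qc$ as wanted. Every external hyperplane other than $H_\infty$ contains a unique such $\Pi$, namely its intersection with $H_\infty$, while there are $[r]_q$ subspaces $\Pi$ in all; therefore, provided the number $a-1$ of external hyperplanes distinct from $H_\infty$ is less than $[r]_q$ — equivalently $a\le[r]_q$ — some $\Pi$ is met by no external hyperplane other than $H_\infty$, and we conclude.

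The one remaining case, $a>[r]_q$ (an even set riddled by an unexpectedly large family of external hyperplanes), is where I expect the real difficulty to lie. A reasonable attempt is to push the pencil idea further: for two external hyperplanes $H_1,H_2$ the $(r-2)$-space $\Psi=H_1\cap H_2$ is disjoint from $S$, and each non-external hyperplane through $\Psi$ cuts $S$ in a nonempty even set of $\PG(r-1,q)$ that itself has an external hyperplane, which invites a secondary induction on a strengthened statement also bounding the number of external hyperplanes. Alternatively, one identifies $S$ with a codeword of the dual of the binary code generated by the hyperplanes (equivalently the lines) of $\PG(r,q)$ and reads the estimate off as a minimum-distance bound, which is the coding-theoretic route of the cited reference; carrying that out, or else supplying the missing combinatorial bound on $a$, is the crux.
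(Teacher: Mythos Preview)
The paper does not contain a proof of this theorem: it is quoted from Calkin, Key and de~Resmini \cite{calkin1999minimumweight} without argument and used only as a black box in Section~\ref{sec:stab}. So there is no proof in the paper to compare your proposal against.

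On its own merits: your base case is correct, and the inductive treatment of the regimes $a\le 1$ and $2\le a\le[r]_q$ is clean and valid. The remaining case $a>[r]_q$, however, is a genuine gap, and you correctly identify it as the crux. There is no short combinatorial patch. For instance, the obvious variant---counting only over hyperplanes through a fixed $P\in S$, which are all secant---gives
\[
[r]_q+(|S|-1)[r-1]_q=\sum_{H\ni P}|S\cap H|\ge[r]_q\,c,
\]
and hence $|S|\ge 1+\dfrac{[r]_q(c-1)}{[r-1]_q}=qc-q+1+\dfrac{c-1}{[r-1]_q}$, which falls short of $qc$ by roughly $q$. Your secondary-induction idea would require a quantitative hypothesis bounding the number of external hyperplanes that you have not formulated, let alone proved. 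The route actually taken in \cite{calkin1999minimumweight} is the coding-theoretic one you allude to---identifying nonempty even sets with nonzero codewords of the binary dual of the point--line (equivalently point--hyperplane) incidence code of $\PG(r,q)$ and determining that code's minimum weight---and that computation is not elementary. As it stands, your proposal is an honest partial argument, not a proof.
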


\begin{definition}
Let $\pi$ and $\sigma$ be complementary subspaces in $\PG(r,q)$, and take a set of points $S \subseteq \sigma$.
The \textbf{cone} $C$ with \textbf{vertex} $\pi$ and \textbf{basis} $S$ is the set of all points which lie on a line intersecting both $\pi$ and $S$, i.e.\
\[
 C = \bigcup_{P \in S} \la P,\pi \ra.
\]
We call $C \setminus \pi$ a \textbf{cylinder}.
If $\dim \pi = r-3$, and $S$ is a hyperoval in $\sigma$, we call $C\setminus \pi$ a \textbf{hypercylinder}.
\end{definition}

\begin{theorem}[{\cite[Proposition 3]{calkin1999minimumweight}}]
\label{th:classconehyperoval}
For $q \geq 4$ even, every point set in $\PG(3,q)$ of size $q^2+2q$ of type $(0,2,q)_1$ is a hypercylinder.
\end{theorem}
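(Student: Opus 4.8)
The plan is to let $\mathcal{K}$ be a point set in $\PG(3,q)$, $q\geq 4$ even, of size $q^2+2q$ and type $(0,2,q)_1$, and show it is a cone with a line as vertex and a hyperoval as basis. First I would analyse the line types. Since $\mathcal{K}$ has type $(0,2,q)_1$, every line meets $\mathcal{K}$ in $0$, $2$, or $q$ points; call these external, secant, and long lines respectively, and note that long lines must actually occur because $\mathcal{K}$ is of type $(0,2,q)_1$ (not merely $\{0,2\}_1$). The key first step is to count, for a fixed point $P\in\mathcal{K}$, the lines through $P$. There are $q^2+q+1$ lines through $P$, each contributing at least one further point of $\mathcal{K}$; writing $a$ for the number of long lines through $P$ and $b$ for the number of secant lines through $P$, double counting the points of $\mathcal{K}\setminus\{P\}$ on lines through $P$ gives $a(q-1)+b = q^2+2q-1$, together with $a+b\leq q^2+q+1$. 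Solving, $a(q-2)\geq q-2$, so $a\geq 1$: every point of $\mathcal{K}$ lies on at least one long line. A sharper bound should pin down $a$ exactly; I expect $a=q+2$ (the number of long lines of a hypercylinder through a fixed point), forcing the long lines through $P$ to partition, or nearly partition, $\mathcal{K}\setminus\{P\}$.

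Next I would study the long lines globally. A short double count of incident (point, long line) pairs, using that each of the $q^2+2q$ points lies on at least one long line and each long line carries $q$ points, bounds the number $N$ of long lines from below by $q+2$; and intersecting two long lines (their union would have $\geq 2q-1 > q$ points of $\mathcal{K}$ on a plane spanned by them unless they meet) should show two long lines meet in at most... this needs care. The cleaner route: pick a long line $\ell$ and a point $P\in\mathcal{K}\setminus\ell$. Every line $PQ$ with $Q\in\ell\cap\mathcal{K}$ meets $\mathcal{K}$ in $\geq 2$ points, hence in exactly $2$ or $q$; counting shows these $q$ lines cover all of $\mathcal{K}$, and a parity/counting argument forces a common structure. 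I would aim to show all long lines pass through a common line $m$ (the vertex-to-be): if $\ell_1,\ell_2$ are long lines, the plane $\langle\ell_1,\ell_2\rangle$ meets $\mathcal{K}$ in a set whose line sections are again of type $(0,2,q)$, and a plane of $\PG(3,q)$ can hold at most... this is where I would invoke the structure of KM-arcs from Proposition~\ref{prop:kmarcs}: the section of $\mathcal{K}$ by such a plane is either a KM-arc of type $q$ (size $2q$) or contained in a pencil of long lines through a point of that plane.

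The main obstacle is organising the plane sections and proving that all long lines are concurrent with a fixed line $m$, after which the argument finishes quickly. Indeed, once $m$ is identified, projecting $\PG(3,q)$ from $m$ onto a complementary line $\sigma$ sends $\mathcal{K}$ onto a point set $\bar{\mathcal{K}}\subseteq\PG(2,q)$ (using a plane through $m$); each line of $\PG(2,q)$ pulls back to a plane through $m$, and the type of $\mathcal{K}$ translates into $\bar{\mathcal{K}}$ being of type $(0,2,q)_1$ with $|\bar{\mathcal{K}}|$ computable, namely of type $(0,2,q)$ in a plane — but the cleanest is to project onto a plane $\Sigma$ from a single point of $m$ first, reducing to the dimension-$2$ statement. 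Actually the efficient path is: show $\mathcal{K}$ is the union of long lines, all meeting a fixed line $m$, with $m\cap\mathcal{K}=\emptyset$; then $\mathcal{K}\cup m$ is a cone with vertex $m$, and its section by a plane $\Sigma$ not through $m$ is a set $\mathcal{O}$ of $q+2$ points in $\PG(2,q)\cong\Sigma$ of type $(0,2)_1$ — a hyperoval by definition — whence $\mathcal{K}=C\setminus m$ is a hypercylinder. I would use Theorem~\ref{Th:minimumsizecodeword} on even sets only if needed to rule out degenerate configurations; the size $q^2+2q$ is exactly the lower bound there, which is suggestive and may be the slickest way to force the cone structure: if one can show $\mathcal{K}$ is an even set, then $|\mathcal{K}|=q^2+2q$ meets the bound with equality, and an equality-characterisation (the $q=$ even analogue for $\PG(3,q)$ of the extremal even sets) identifies it as a hypercylinder — this would be the preferred high-level strategy, reducing the whole proof to verifying that a type-$(0,2,q)_1$ set with $q$ even is automatically even, which follows since $q$ and $2$ are even and $q\equiv 0,2\equiv$ even, so every line meets $\mathcal{K}$ evenly.
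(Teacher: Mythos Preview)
The paper does not give its own proof of this statement: Theorem~\ref{th:classconehyperoval} is quoted as a preliminary result from \cite[Proposition~3]{calkin1999minimumweight} and is used as a black box (for instance in Corollary~\ref{cor:charr=3} and Corollary~\ref{cor:hypercylinder}). So there is no in-paper argument to compare your proposal against.

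As for the proposal itself, it is more of an exploration than a proof. Your initial double count is fine and gives $a\geq 1$, but you then offer three different continuations without completing any of them: the claim $a=q+2$ is asserted without derivation; the analysis of planes spanned by two long lines is abandoned mid-sentence; and the crucial step, that all long lines meet a common line $m$, is identified as ``the main obstacle'' but never resolved. That step is genuinely the heart of the matter, and nothing in your outline forces it.

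Your closing ``preferred high-level strategy'' is circular. You correctly observe that a set of type $(0,2,q)_1$ with $q$ even is an even set and that $|\mathcal K|=q^2+2q$ attains the bound of Theorem~\ref{Th:minimumsizecodeword}, and then propose to finish by invoking ``an equality-characterisation \dots\ [that] identifies it as a hypercylinder''. But Theorem~\ref{th:classconehyperoval} \emph{is} that equality characterisation in $\PG(3,q)$; you cannot appeal to it to prove itself. If you want an actual proof you must go back and establish the existence of the vertex line $m$ directly, for example by analysing plane sections (which must be KM-arcs of type $q$, hence dual hyperovals, i.e.\ symmetric differences of two lines) and showing the ``missing'' lines of those sections are all concurrent.
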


\begin{remark}
The theorem above also holds for $q=2$, because in that case, the complement of such a set is a set of size $[3]_2$ of type $(1,3)_1$, hence a hyperplane, and the complement of a hyperplane is a hypercylinder.
\end{remark}

\subsection{Linear sets}

Let $V$ be an $r$-dimensional vector space over $\fqn$ and let $\Lambda=\PG(V,\F_{q^n})=\PG(r-1,q^n)$.
If $U$ is a $k$-dimensional $\fq$-subspace of $V$, then the set of points
\[ L_U=\{\la { u} \ra_{\mathbb{F}_{q^n}} : { u}\in U\setminus \{{ 0} \}\}\subseteq \Lambda \]
is said to be an $\fq$-\textbf{linear set of rank $k$}.
Note that when we use the notation $L_U$ for an $ \fq$-linear set, we are formally considering both the set of points it defines and the underlying subspace $U$.

If $\Omega=\PG(W,\F_{q^n})$ is a subspace of $\PG(r-1,q^n)$, the intersection of $L_U$ with $\Omega$ is the $\F_q$-linear set $L_{U \cap W}$. 
We say that $\Omega$ has \textbf{weight} $i$ with respect to $L_U$, denoted as $w_{L_U}(\Omega) = i$, if the $\F_q$-linear set $L_{W \cap U}$ has rank $i$, i.e.\ $w_{L_U}(\Omega) = \dim_{\F_q}(U \cap W)$.
If $N_i$ denotes the number of points of $\Lambda$ having weight $i\in \{0,\ldots,k\}$  in $L_U$, the following relations hold:
\begin{equation}\label{eq:card}
    |L_U| \leq [k]_q,
\end{equation}
\begin{equation}\label{eq:pesicard}
    |L_U| =N_1+\ldots+N_k,
\end{equation}
\begin{equation}\label{eq:pesivett}
    N_1+N_2(q+1)+\ldots+N_k(q^{k-1}+\ldots+q+1)=[k]_q.
\end{equation}

Furthermore, $L_U$ is called \textbf{scattered} if it has the maximum number $[k]_q$  of points, or equivalently, if all points of $L_U$ have weight one.

Also, the following property concerning the weight of subspaces holds true.

\begin{proposition}[{\cite[Property 2.3]{polverino2010linear}}]
\label{prop:containingsubspace}
Let $L_U$ be an $\F_q$-linear set of $\PG(r-1,q^n)$ of rank $k$ and let $\Omega$ be an $s$-space of $\PG(r-1,q^n)$.
Then $\Omega \subseteq L_U$ if and only if $w_{L_U}(\Omega) \geq sn+1$.
\end{proposition}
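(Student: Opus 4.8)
Write $\Omega=\PG(W,\F_{q^n})$, so that $W$ is an $(s+1)$-dimensional $\F_{q^n}$-subspace of the ambient $r$-dimensional space, equivalently an $(s+1)n$-dimensional $\F_q$-subspace. Recall that $L_U\cap\Omega=L_{U\cap W}$ and that, by definition, $w_{L_U}(\Omega)=\dim_{\F_q}(U\cap W)$; put $t:=\dim_{\F_q}(U\cap W)$. The statement $\Omega\subseteq L_U$ is then equivalent to $L_{U\cap W}=\Omega=\PG(s,q^n)$. I will prove the two implications separately; the whole argument is essentially a dimension count over $\F_q$ together with the cardinality bound \eqref{eq:card}.

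For the implication ($\Leftarrow$), assume $t\geq sn+1$ and take an arbitrary point $P=\la v\ra_{\F_{q^n}}\in\Omega$. Since $W$ is an $\F_{q^n}$-subspace and $v\in W$, the line $\la v\ra_{\F_{q^n}}$ is an $n$-dimensional $\F_q$-subspace of $W$. Both $U\cap W$ and $\la v\ra_{\F_{q^n}}$ are $\F_q$-subspaces of $W$, so Grassmann's identity inside $W$ gives
\[
\dim_{\F_q}\bigl((U\cap W)\cap\la v\ra_{\F_{q^n}}\bigr)\ \geq\ t+n-(s+1)n\ \geq\ (sn+1)+n-(s+1)n\ =\ 1 .
\]
Hence there is a nonzero $u\in U$ lying in $\la v\ra_{\F_{q^n}}$, so $P=\la v\ra_{\F_{q^n}}=\la u\ra_{\F_{q^n}}\in L_U$. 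As $P$ was an arbitrary point of $\Omega$, we conclude $\Omega\subseteq L_U$.

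For the implication ($\Rightarrow$), assume $\Omega\subseteq L_U$, so $L_{U\cap W}$ covers all of $\PG(s,q^n)$ and therefore $|L_{U\cap W}|=[s+1]_{q^n}=\sum_{i=0}^{s}q^{ni}$. Since $L_{U\cap W}$ is an $\F_q$-linear set of rank $t$, the bound \eqref{eq:card} yields $[s+1]_{q^n}\leq [t]_q=\sum_{j=0}^{t-1}q^j$. If we had $t\leq sn$, then $[t]_q\leq [sn]_q=\tfrac{q^{sn}-1}{q-1}<q^{sn}\leq\sum_{i=0}^{s}q^{ni}=[s+1]_{q^n}$, a contradiction; hence $t\geq sn+1$, i.e.\ $w_{L_U}(\Omega)\geq sn+1$. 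I do not expect any genuine obstacle here: the only point requiring care is to consistently distinguish $\F_q$- from $\F_{q^n}$-dimensions (so that a projective line over $\F_{q^n}$ contributes $n$ to the $\F_q$-dimension) and to invoke the correct cardinality inequality for linear sets; the edge case $s=0$ is subsumed, since then the claim just restates that a point has positive weight exactly when it lies in $L_U$.
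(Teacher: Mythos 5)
Your proof is correct. The paper states this proposition as a citation of \cite[Property 2.3]{polverino2010linear} and supplies no proof of its own; your argument — Grassmann's identity inside $W$ (comparing $\F_q$-dimensions, with $\dim_{\F_q}W=(s+1)n$) for the sufficiency, and the cardinality bound \eqref{eq:card} applied to $L_{U\cap W}$ for the necessity — is the standard one and is exactly what one would expect here, with the $s=0$ edge case correctly subsumed.
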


We refer to \cite{lavrauw2015field} and \cite{polverino2010linear} for comprehensive references on linear sets.

In \cite{csajbok2021generalising}, the authors introduced a special family of scattered linear sets, named $h$-scattered linear sets; see \cite[Definition 1.1]{csajbok2021generalising}.

\begin{definition}
Let $h$ be a positive integer such that $1\leq h\leq r-1$.
An $\F_q$-linear set $L_U$ of $\Lambda$ is called $h$-\textbf{scattered} (or scattered w.r.t.\ the $(h-1)$-dimensional subspaces) if $\la L_U \ra=\Lambda$ and each $(h-1)$-dimensional $\F_{q^n}$-subspace $\Omega=\PG(W,\fqn)$ of $\Lambda$ has weight in $L_U$ at most $h$.
\end{definition}

The $1$-scattered linear sets are the scattered linear sets generating $\Lambda$. The same definition applied to $h=r$ describes the canonical subgeometries of $\Lambda$ (i.e.\ the copies of $\PG(r-1,q)$ embedded in $\PG(r-1,q^n)$). If $h=r-1$ and $\dim_{\F_q}(U)=n$, then $L_U$ is $h$-scattered exactly when $L_U$ is a scattered $\F_q$-linear set with respect to the hyperplanes, introduced in \cite[Definition 14]{sheekeyVdV}; see also \cite{lunardon2017mrd}.

Theorem \ref{th:bound} bounds the rank of an $h$-scattered linear set.

\begin{theorem}[{\cite[Theorem 2.3]{csajbok2021generalising}}]
\label{th:bound}
If $L_U$ is an $h$-scattered $\F_q$-linear set of rank $k$ in $\Lambda=\PG(r-1,q^n)$, then one of the following holds:
\begin{itemize}
\item $k=r$ and $L_U$ is a subgeometry $\PG(r-1,q)$ of $\Lambda$;
\item $k\leq\frac{rn}{h+1}$.
\end{itemize}
\end{theorem}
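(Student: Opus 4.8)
The plan is to establish the dichotomy by exhibiting, inside $V$, a large internal direct sum built out of $\fq$-scalar multiples of $U$. First I would record two cheap reductions. Since $\langle L_U\rangle=\Lambda$, the $\fq$-subspace $U$ spans $V$ over $\fqn$, so $k=\dim_{\fq}U\ge\dim_{\fqn}V=r$; and if $k=r$ then any $\fq$-basis of $U$ is also an $\fqn$-basis of $V$, which displays $L_U$ as a canonical subgeometry $\PG(r-1,q)$ of $\Lambda$ — the first alternative. Hence from now on I may assume $k>r$ and aim for $k(h+1)\le rn$. Second, I would prove the monotonicity statement that an $h$-scattered linear set is automatically $m$-scattered for every $1\le m\le h$: if some $m$-dimensional $\fqn$-subspace $W'$ had $\dim_{\fq}(U\cap W')\ge m+1$, one enlarges $W'$ to an $h$-dimensional $\fqn$-subspace $W$ by adjoining $h-m$ further vectors of $U$ that are $\fqn$-independent from $W'$ and from one another (possible since $U$ spans $V$ and every intermediate subspace has dimension at most $h\le r-1<r$), obtaining $\dim_{\fq}(U\cap W)\ge(m+1)+(h-m)=h+1$, contradicting $h$-scatteredness. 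In particular $L_U$ is scattered, so every point of $L_U$ has weight $1$.

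The heart of the argument is the following claim: set $m=\min\{h+1,n\}$ and pick $\fq$-linearly independent elements $\lambda_0,\dots,\lambda_{m-1}\in\fqn$ (possible because $\dim_{\fq}\fqn=n\ge m$); then the sum $\lambda_0U+\lambda_1U+\dots+\lambda_{m-1}U$ inside $V$ is direct. To see this, suppose $\sum_{i=0}^{m-1}\lambda_iu_i=0$ with $u_i\in U$ not all zero. As the $\lambda_i$ are nonzero, this is a nontrivial $\fqn$-linear dependence among $u_0,\dots,u_{m-1}$, so $W:=\langle u_0,\dots,u_{m-1}\rangle_{\fqn}$ has $\fqn$-dimension $m'\le m-1\le h$. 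By the monotonicity step $L_U$ is $m'$-scattered, so $\dim_{\fq}(U\cap W)\le m'$, while $U\cap W$ already contains an $\fqn$-basis of $W$ chosen among the $u_i$, so $\dim_{\fq}(U\cap W)=m'$ and $U\cap W$ generates $W$ over $\fqn$. Consequently $U\cap W$ is a canonical $\fq$-subgeometry of $\PG(W,\fqn)$: there is an $\fqn$-basis of $W$ with respect to which $U\cap W=\fq^{m'}$. In these coordinates every $u_i$ lies in $\fq^{m'}$, and reading the relation $\sum_i\lambda_iu_i=0$ off coordinate by coordinate gives, for each coordinate $j$, an identity $\sum_i(u_i)_j\lambda_i=0$ with $\fq$-coefficients $(u_i)_j$; since $\lambda_0,\dots,\lambda_{m-1}$ are $\fq$-independent, all $(u_i)_j$ vanish, so all $u_i=0$, a contradiction. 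This proves the claim, and comparing $\fq$-dimensions yields $m\,k\le\dim_{\fq}V=rn$.

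To conclude, I split according to the value of $m$. If $n\ge h+1$, then $m=h+1$ and the inequality reads $(h+1)k\le rn$, i.e.\ $k\le rn/(h+1)$ — the second alternative. If instead $n\le h$, then $m=n$ and the inequality becomes $nk\le rn$, i.e.\ $k\le r$, which together with $k\ge r$ forces $k=r$, contradicting the running assumption $k>r$. Thus the case $k>r$ can only occur when $n\ge h+1$, and in that case it produces $k\le rn/(h+1)$, completing the proof.

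The step I expect to require the most care is the direct-sum claim: recognizing that $\fq$-linear independence of the scalars $\lambda_i$ (rather than, say, a primitivity or field-generation condition) is precisely what makes the reduction to a subgeometry go through uniformly in $n$, and dealing with the degenerate sub-cases (only one nonzero $u_i$, or $m'=1$) where that reduction is trivial. By comparison, the monotonicity lemma and the final dimension count are routine, although the former genuinely uses the spanning hypothesis $\langle L_U\rangle=\Lambda$ to guarantee enough room to enlarge $W'$.
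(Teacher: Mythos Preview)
The paper does not supply its own proof of this theorem; it is quoted verbatim from \cite{csajbok2021generalising} and used as a black box. There is therefore nothing in the present paper to compare your argument against.

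That said, your proof is correct, and it is essentially the argument one finds in the cited source: after the cheap reduction to $k>r$, one shows that for $\fq$-linearly independent scalars $\lambda_0,\dots,\lambda_{m-1}\in\fqn$ (with $m=\min\{h+1,n\}$) the sum $\lambda_0U+\dots+\lambda_{m-1}U$ is internally direct in $V$, giving $mk\le rn$ and hence the desired bound. Your monotonicity step ($h$-scattered $\Rightarrow$ $m'$-scattered for $m'\le h$) is the standard auxiliary lemma, and the contradiction you extract---passing to coordinates in which $U\cap W=\fq^{m'}$ and reading the relation $\sum_i\lambda_iu_i=0$ componentwise against the $\fq$-independence of the $\lambda_i$---is exactly the mechanism that makes the direct-sum claim go through. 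The case split on $n\ge h+1$ versus $n\le h$ at the end is handled cleanly. No gaps.
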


An $h$-scattered $\F_q$-linear set of maximum possible rank is said to be a \textbf{maximum $h$-scattered} $\F_q$-linear set.
An $h$-scattered $\F_q$-linear set of rank $\frac{rn}{h+1}$ is said to be a \textbf{properly maximum $h$-scattered} $\F_q$-linear set.
Theorem \ref{th:inter} bounds the weight of the hyperplanes with respect to a maximum $h$-scattered linear set.

\begin{theorem}[{\cite[Theorem 2.7]{csajbok2021generalising}}]
 \label{th:inter}
If $L_U$ is a properly maximum $h$-scattered $\F_q$-linear set of $\Lambda$, then for any hyperplane $\sigma$ of $\Lambda$ we have
\[
 \frac{rn}{h+1}-n\leq w_{L_{U}}(\sigma) \leq \frac{rn}{h+1}-n+h.
\]
\end{theorem}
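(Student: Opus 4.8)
The plan is to pass to the quotient of $V$ by the hyperplane and run a single dimension count there. Write $k=\dim_{\F_q}U=\tfrac{rn}{h+1}$ and let $\sigma=\PG(W,\fqn)$, so $W$ is an $(r-1)$-dimensional $\fqn$-subspace of $V$. Because $W$ is $\fqn$-linear, $\overline V:=V/W$ is a one-dimensional $\fqn$-vector space and the projection $\pi\colon V\to\overline V$ is $\fqn$-linear. Put $\overline U:=\pi(U)$; then $\dim_{\F_q}\overline U=\dim_{\F_q}U-\dim_{\F_q}(U\cap W)=k-w_{L_U}(\sigma)$, and $\overline U\ne 0$ since $U\not\subseteq W$ (as $\langle L_U\rangle=\Lambda$). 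From $\dim_{\F_q}\overline U\le\dim_{\F_q}\overline V=n$ we immediately get the lower inequality $w_{L_U}(\sigma)\ge k-n$, so the whole point is to prove $\dim_{\F_q}\overline U\ge n-h$.

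The key input, which is where maximality is used, is the claim that \emph{for every $\F_q$-linearly independent tuple $\lambda_0,\dots,\lambda_h\in\fqn$ one has $V=\lambda_0 U\oplus\lambda_1 U\oplus\cdots\oplus\lambda_h U$}. I would first show the sum is direct: a nontrivial relation $\sum_{i}\lambda_i u_i=0$ with $u_i\in U$ forces $\dim_{\fqn}\langle u_0,\dots,u_h\rangle_{\fqn}\le h$, and if the $u_i$ are also $\F_q$-independent this produces an $\fqn$-subspace of dimension $\le h$ meeting $U$ in $\ge h+1$ $\F_q$-dimensions, contradicting the $h$-scattered property (enlarge the span to an $(h-1)$-space by adjoining further vectors of $U$, which is possible because $\langle L_U\rangle=\Lambda$). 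If the $u_i$ are $\F_q$-dependent, eliminate one of them via an $\F_q$-relation and substitute back; a short check shows the new coefficients remain $\F_q$-independent, so induction on the number of vectors reduces to the previous case. Once the sum is direct, maximality finishes it: since $L_U$ is \emph{properly} maximum, $\dim_{\F_q}(\lambda_0 U\oplus\cdots\oplus\lambda_h U)=(h+1)k=rn=\dim_{\F_q}V$, whence the sum is all of $V$.

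Applying the $\fqn$-linear map $\pi$ to $V=\bigoplus_i\lambda_i U$ gives $\lambda_0\overline U+\lambda_1\overline U+\cdots+\lambda_h\overline U=\overline V$ for \emph{every} admissible tuple; it is this abundance of decompositions of the one-dimensional space $\overline V$ that forces $\overline U$ to be large. To conclude, suppose $\dim_{\F_q}\overline U\le n-h-1$. Identifying $\overline V$ with $\fqn$ as $\fqn$-spaces, fix an $\F_q$-hyperplane $H=\{y\in\fqn:\Tr_{\fqn/\F_q}(cy)=0\}$ with $c\ne 0$. For $\lambda\in\fqn$ one has $\lambda\overline U\subseteq H$ iff $c\lambda$ lies in the trace-orthogonal complement $\overline U^{\perp}$ of $\overline U$, so $\{\lambda:\lambda\overline U\subseteq H\}=c^{-1}\overline U^{\perp}$ has $\F_q$-dimension $n-\dim_{\F_q}\overline U\ge h+1$; choosing $\F_q$-independent $\lambda_0,\dots,\lambda_h$ inside it gives $\sum_i\lambda_i\overline U\subseteq H\subsetneq\overline V$, contradicting the identity above. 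Hence $\dim_{\F_q}\overline U\ge n-h$ and $w_{L_U}(\sigma)=k-\dim_{\F_q}\overline U\le k-n+h$.

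The main obstacle I anticipate is the direct-sum lemma, in particular the elimination step handling $\F_q$-dependent tuples $(u_0,\dots,u_h)$, which has to be done with some care. It is also essential that this lemma be available for \emph{arbitrary} $\F_q$-independent coefficient tuples rather than merely for consecutive powers of a single generator of $\fqn$, since the freedom to choose the $\lambda_i$ inside $c^{-1}\overline U^{\perp}$ is exactly what makes the final contradiction work. Beyond that, the argument is elementary linear algebra over the extension $\F_q\subseteq\fqn$ together with the non-degeneracy of the trace form.
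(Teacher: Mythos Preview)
This theorem is not proved in the present paper; it is quoted from \cite{csajbok2021generalising} without argument, so there is no proof here to compare against. Your proposal is correct and is in fact close in spirit to the original: the direct-sum lemma you isolate (that $V=\lambda_0 U\oplus\cdots\oplus\lambda_h U$ for every $\F_q$-independent tuple $\lambda_0,\dots,\lambda_h\in\fqn$) is exactly the characterisation of $h$-scatteredness established in \cite{csajbok2021generalising}, and your quotient-plus-trace-duality step is a clean way to extract the upper bound on $w_{L_U}(\sigma)$ from it.

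Two small points worth tightening in the write-up. First, in the enlarging step it is essential that the adjoined vectors come from $U$: each such adjunction raises \emph{both} the $\fqn$-dimension of the span and the $\F_q$-dimension of its intersection with $U$ by one, and that simultaneous growth is what eventually forces $\dim_{\F_q}(U\cap W)\ge h+1$; enlarging by arbitrary vectors would not suffice. Second, when the elimination step leaves you with a relation among $j+1<h+1$ vectors that happen to be $\F_q$-independent, the same enlarging argument is needed again (the span now has $\fqn$-dimension $\le j$, and you must grow it to dimension $h$ by adjoining $U$-vectors before invoking $h$-scatteredness). Both points are implicit in your sketch but deserve to be spelled out.
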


Since a properly maximum $h$-scattered linear set is also scattered and using the correspondence between $h$-scattered linear sets and maximum rank distance codes (see \cite{lunardon2017mrd,1930-5346_2019_0_108,sheekey2016new,sheekeyVdV,zini2021scattered}), one can completely determine the pattern of intersections with the hyperplanes.

\begin{theorem}[{\cite[Theorem 7.1]{zini2021scattered}}]
\label{th:intersectionhyper}
Let $L_U$ be a properly maximum $h$-scattered $\F_q$-linear set in $\Lambda$.
For every $i\in\left\{0,\ldots,h\right\}$, the number of hyperplanes of weight $\frac{rn}{h+1}-n+i$ in $L_U$, that is the number of hyperplanes meeting $L_{U}$ in $\left[\frac{rn}{h+1}-n+i\right]_q$
points, is
\begin{equation}\label{eq:ti} 
t_i=\frac{1}{q^n-1}{\qbin{n}{i}{q} } \sum_{j=0}^{h-i} (-1)^{j}{\qbin{n-i}{j}{q} } q^{\binom{j}{2}}\left(q^\frac{rn(h-i-j+1)}{h+1}-1\right). 
\end{equation}
In particular, $t_i>0$ for every $i\in\left\{0,\ldots,h\right\}$.
\end{theorem}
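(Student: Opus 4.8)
The plan is to reduce the statement to the rank-weight distribution of an associated maximum rank distance code, which is known explicitly. First I would parametrize the hyperplanes of $\Lambda$ by the nonzero $\fqn$-linear functionals $\varphi\colon\fqn^{r}\to\fqn$, two functionals $\varphi,\varphi'$ defining the same hyperplane $H_{\varphi}=\ker\varphi$ precisely when $\varphi'=\lambda\varphi$ with $\lambda\in\fqn^{\ast}$. Writing $k=\frac{rn}{h+1}$ for the rank of $L_{U}$, the identity
\[
w_{L_{U}}(H_{\varphi})=\dim_{\F_{q}}(U\cap\ker\varphi)=k-\dim_{\F_{q}}\varphi(U)
\]
shows that $H_\varphi$ has weight $k-n+i$ exactly when $\dim_{\F_{q}}\varphi(U)=n-i$. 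Since $\langle L_{U}\rangle_{\fqn}=\Lambda$, the restriction map $\varphi\mapsto\varphi|_{U}$ is injective, so its image $\mathcal C\subseteq\mathrm{Hom}_{\F_{q}}(U,\fqn)$ is an $\F_{q}$-subspace of $\F_{q}$-dimension $rn$; fixing $\F_{q}$-bases of $U$ and of $\fqn$, it becomes an $\F_{q}$-linear code of $k\times n$ matrices over $\F_{q}$, the matrix rank of $\varphi|_{U}$ being $\dim_{\F_{q}}\varphi(U)$. As the $q^{n}-1$ nonzero $\fqn$-multiples of a functional all restrict to maps of equal rank, the number of hyperplanes of weight $k-n+i$ equals $A_{n-i}/(q^{n}-1)$, where $A_{w}$ denotes the number of codewords of $\mathcal C$ of rank $w$.

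Next I would check that $\mathcal C$ is MRD. Its minimum rank distance is $d=k-\max_{\sigma}w_{L_{U}}(\sigma)$, and Theorem \ref{th:inter} gives $\max_{\sigma}w_{L_{U}}(\sigma)\le k-n+h$, hence $d\ge n-h$. Since $h\le r-1$ we have $k=\frac{rn}{h+1}\ge n$, so the rank-metric Singleton bound gives $q^{rn}=|\mathcal C|\le q^{k(n-d+1)}\le q^{k(n-(n-h)+1)}=q^{k(h+1)}=q^{rn}$; thus every inequality is an equality, $d=n-h$, and $\mathcal C$ is an MRD code of $k\times n$ matrices over $\F_{q}$ with minimum distance $n-h$. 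The rank-weight distribution of such a code is known (Delsarte, Gabidulin) and depends only on $k$, $n$, $d$: for $d\le w\le n$,
\[
A_{w}=\qbin{n}{w}{q}\sum_{j=0}^{w-d}(-1)^{j}\qbin{w}{j}{q}q^{\binom{j}{2}}\bigl(q^{k(w-d+1-j)}-1\bigr).
\]
Substituting $w=n-i$, so that $w-d=h-i$ and $k(w-d+1-j)=\frac{rn(h-i-j+1)}{h+1}$, and using $\qbin{n}{n-i}{q}=\qbin{n}{i}{q}$, one gets
\[
A_{n-i}=\qbin{n}{i}{q}\sum_{j=0}^{h-i}(-1)^{j}\qbin{n-i}{j}{q}q^{\binom{j}{2}}\Bigl(q^{\frac{rn(h-i-j+1)}{h+1}}-1\Bigr),
\]
and dividing by $q^{n}-1$ reproduces exactly the expression \eqref{eq:ti} for $t_{i}$. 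Theorems \ref{th:bound} and \ref{th:inter} also guarantee that no weight outside $\{n-h,\dots,n\}$ occurs, so all hyperplanes are accounted for.

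For the last assertion, $t_{i}>0$ for all $i\in\{0,\dots,h\}$, I would use the standard fact that an MRD code has codewords of every rank between its minimum distance and its length; this can be extracted directly from the displayed formula (the alternating sum is a M\"obius inversion over the lattice of $\F_{q}$-subspaces of $\fqn$), or deduced from the fact that the dual of an MRD code is again MRD. The part I expect to be the main obstacle is not the computation itself, which is routine once the formula is available, but the bookkeeping in the reduction: one must verify carefully that $\mathcal C$ is genuinely MRD with precisely the parameters ($k\times n$ matrices, minimum distance $n-h$) that make the Delsarte--Gabidulin substitution land on \eqref{eq:ti}, and the key inputs for this are the exact value $\max_{\sigma}w_{L_{U}}(\sigma)=k-n+h$ from Theorem \ref{th:inter} together with the size count $|\mathcal C|=q^{rn}$, which forces equality in the Singleton bound.
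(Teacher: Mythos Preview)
The paper does not actually prove this theorem; it is quoted from \cite{zini2021scattered}, and the only indication of method is the sentence preceding the statement, which points to ``the correspondence between $h$-scattered linear sets and maximum rank distance codes''. Your proposal follows precisely this route: you build the rank metric code $\mathcal C$ of restrictions $\varphi|_U$, show it is MRD with parameters $(k\times n,\ d=n-h)$ via Theorem~\ref{th:inter} and the Singleton bound, and then read off \eqref{eq:ti} from the Delsarte--Gabidulin weight distribution. The bookkeeping (the identification $t_i=A_{n-i}/(q^n-1)$, the verification $k\ge n$ so that the Singleton bound has the right shape, and the substitution $w=n-i$) is handled correctly, so your argument is both sound and aligned with the approach the paper alludes to.
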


Constructions of $h$-scattered $\F_q$-linear sets have been given in \cite{csajbok2021generalising} and in \cite{napolitano2021linear}.

There are known constructions of properly maximum $h$-scattered linear sets \sam{of rank $\frac{rn}{h+1}$?} in $\Lambda$ in the following cases:
\begin{itemize}
    \item[a)] $nr$ is even and $h=1$, see \cite{ball2000linear,bartoli2018maximum,blokhuis2000scattered,csajbok2017maximum};
    \item[b)] $h+1 \mid r$ and $n\geq h+1$, see \cite{csajbok2021generalising,napolitano2021linear};
    \item[c)] $nr'$ is even, $h=n-3$ and $r=r'(n-2)/2$, see \cite{csajbok2021generalising}.
\end{itemize}

More recently, in \cite{bartoli2021evasive} a construction was exhibited of a $1$-scattered linear set of rank $7$ in $\PG(2,q^5)$, where $q=p^h$ and $p \in \{2,3,5\}$.

\section{Auxiliary results}\label{sec:aux}

Now we will prove some results which will be fundamental for the rest of the paper.

\begin{lemma} \label{lem:extensionpoint}
Let $U$ be a $k$-dimensional $\F_q$-subspace of $V$ \sam{$= \fqn^r$} and let $v \in V$ be such that $\langle v \rangle_{\F_{q^n}} \notin L_{U} $. Let $U_1=U \oplus \langle v \rangle_{\F_q}$. Then any point in $L_{U_1} \setminus L_{U}$ has weight one in $L_{U_1}$. Moreover, if $\langle v \rangle_{\F_{q^n}} \cap \langle U \rangle_{\F_{q^n}}=\{0\}$, then $\lvert L_{U_1} \rvert = \lvert L_{U} \rvert+q^k$.
\end{lemma}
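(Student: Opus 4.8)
The plan is to analyze the points of $L_{U_1}$ by splitting an arbitrary nonzero vector $w \in U_1$ as $w = u + \lambda v$ with $u \in U$ and $\lambda \in \F_q$. First I would handle the claim about weights. Take a point $P = \langle w \rangle_{\F_{q^n}} \in L_{U_1} \setminus L_U$; then in the decomposition $w = u + \lambda v$ we must have $\lambda \neq 0$, and after scaling we may assume $w = u + v$ for some $u \in U$. Suppose for contradiction that $w_{L_{U_1}}(P) \geq 2$, i.e.\ $\dim_{\F_q}(U_1 \cap \langle w \rangle_{\F_{q^n}}) \geq 2$. Then there is a vector $\mu w \in U_1$ with $\mu \in \F_{q^n} \setminus \F_q$. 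Writing $\mu w = u' + \lambda' v$ with $u' \in U$, $\lambda' \in \F_q$, and comparing with $\mu w = \mu u + \mu v$, I would use that $U \oplus \langle v \rangle_{\F_q}$ is a \emph{direct} sum over $\F_q$ — but the coefficients $\mu u$ and $\mu v$ need not lie in the right pieces, so the argument must instead compare inside $U_1$ itself: from $\mu w, w \in U_1 \cap \langle w \rangle_{\F_{q^n}}$ being $\F_q$-independent, the quotient $U_1 / U$ is $1$-dimensional over $\F_q$, so some $\F_q$-combination $a w + b (\mu w) = (a + b\mu) w$ lies in $U$ with $(a,b) \neq (0,0)$; since $w \notin U$ (as $P \notin L_U$) we get $a + b\mu \neq 0$, hence $\langle w \rangle_{\F_{q^n}} \cap U \neq \{0\}$, forcing $P = \langle w \rangle_{\F_{q^n}} \in L_U$, a contradiction. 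Thus every point of $L_{U_1} \setminus L_U$ has weight exactly one in $L_{U_1}$.

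For the cardinality statement, assume additionally $\langle v \rangle_{\F_{q^n}} \cap \langle U \rangle_{\F_{q^n}} = \{0\}$. Here $\langle U \rangle_{\F_{q^n}}$ denotes the $\F_{q^n}$-span of $U$ in $V$. The idea is to count the vectors of $U_1$ and use the weight information to pass to points. I would write $|U_1| = |U| \cdot q$ since the sum $U \oplus \langle v \rangle_{\F_q}$ is direct over $\F_q$, so $|U_1 \setminus U| = q^{k+1} - q^k$. Every nonzero vector of $U_1 \setminus U$ spans a point of $L_{U_1} \setminus L_U$, and by the first part each such point has weight one, meaning each of these points is spanned by exactly $q - 1$ nonzero vectors of $U_1$; moreover all $q-1$ of those vectors lie in $U_1 \setminus U$ (a point of weight one not in $L_U$ cannot be hit by any nonzero vector of $U$, else it would be in $L_U$). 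Hence $|L_{U_1} \setminus L_U| = (q^{k+1} - q^k)/(q-1) = q^k \cdot (q-1)/(q-1) = q^k$. Wait — this gives $q^k \cdot \frac{q-1}{q-1}$? Let me recompute: $(q^{k+1}-q^k)/(q-1) = q^k(q-1)/(q-1) = q^k$. So indeed $|L_{U_1} \setminus L_U| = q^k$, whence $|L_{U_1}| = |L_U| + q^k$. Notably the extra hypothesis $\langle v \rangle_{\F_{q^n}} \cap \langle U \rangle_{\F_{q^n}} = \{0\}$ guarantees $L_{U_1} \supseteq L_U$ as a disjoint union in the relevant sense — in fact it is needed to ensure that no point of $L_U$ secretly has higher weight in $L_{U_1}$ that would disrupt the count; I would double-check that this hypothesis is exactly what prevents $v$ from being absorbed, so that $|U_1| = q|U|$ really does hold and the point $\langle v\rangle_{\F_{q^n}}$ is genuinely new.

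The main obstacle I anticipate is the bookkeeping in the weight argument: one must be careful that ``weight one'' is about the dimension over $\F_q$ of $U_1 \cap \langle w\rangle_{\F_{q^n}}$, and that the direct-sum structure is only over $\F_q$, not over $\F_{q^n}$, so scalar multiples by elements of $\F_{q^n} \setminus \F_q$ do not respect the decomposition. The clean way around this, as sketched above, is to pass to the $1$-dimensional $\F_q$-quotient $U_1/U$ and argue that two $\F_q$-independent vectors on a common $\F_{q^n}$-line already force a nonzero vector of $U$ on that line. For the cardinality, the subtlety is ensuring the hypothesis $\langle v\rangle_{\F_{q^n}} \cap \langle U\rangle_{\F_{q^n}} = \{0\}$ is used precisely where needed — namely to conclude $\langle v \rangle_{\F_{q^n}} \notin L_U$ is automatic and that the counting of new points is not corrupted by collisions — and then the arithmetic $(q^{k+1}-q^k)/(q-1) = q^k$ finishes it.
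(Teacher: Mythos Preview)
Your proof is correct and follows essentially the same route as the paper: the weight-one claim via the codimension-one inclusion $U \subset U_1$ (the paper phrases it as Grassmann's identity applied to $T = U_1 \cap \langle w\rangle_{\F_{q^n}}$ and $U$ inside $U_1$), and the cardinality by counting the $q^{k+1}-q^k$ vectors of $U_1 \setminus U$, each hitting a new weight-one point (the paper does the equivalent bookkeeping through the standard weight relations, after first verifying that every $P \in L_U$ satisfies $w_{L_U}(P) = w_{L_{U_1}}(P)$). One small clarification: the hypothesis $\langle v \rangle_{\F_{q^n}} \cap \langle U \rangle_{\F_{q^n}} = \{0\}$ is needed precisely to ensure that every vector of $U_1 \setminus U$ spans a point \emph{outside} $L_U$ (equivalently, that weights on $L_U$ do not increase) --- not for $|U_1| = q|U|$, which already follows from $v \notin U$.
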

\begin{proof}
Suppose there exists a point $P=\langle w \rangle_{\F_{q^n}} \in L_{U_1} \setminus L_{U}$ such that $w_{L_{U_1}}(P) >1$. Let $T=U_1 \cap \langle w \rangle_{\F_{q^n}}$. So, $\dim_{\F_q}(T)=w_{L_{U_1}}(P) >1$.
Since $U$ and $T$ are both contained in $U_1$, and $\dim_{\F_q} (U) = \dim_{\F_q} (U_1) - 1$, Grassmann's identity implies that
\[
w_{L_{U}}(P)= \dim_{\F_q} (\langle w \rangle_{\F_{q^n}} \cap U) =
 \dim_{\F_q} (T \cap U) \geq -\dim_{\F_q}(U_1) + \dim_{\F_q} (U) + \dim_{\F_q} (T) \geq 1,
\]
hence $P \in L_{U}$, a contradiction.
Suppose now that $\langle v \rangle_{\F_{q^n}} \cap \langle U \rangle_{\F_{q^n}}=\{0\}$. 
Let $P=\langle u \rangle_{\F_{q^n}} \in L_{U}$, with $u \in U$, then $w_{L_{U}}(P)=w_{L_{U_1}}(P)$. Indeed, 
if $\rho u=u'+\alpha v$, for some $u' \in U, \alpha \in \fq^*$ and $\rho \in \fqn^*$, we have that $\alpha v=\rho u-u'\in U$ 
implying that $\langle v \rangle_{\F_{q^n}} \cap \langle U \rangle_{\F_{q^n}} \neq \{0\}$, a contradiction. So, we have that $w_{L_{U}}(P)=w_{L_{U_1}}(P)$ for any $P \in L_{U}$.  Let $N_1'$ be the number of points (of weight one with respect to $L_{U_1}$) in $L_{U_1} \setminus L_{U}$. By \eqref{eq:pesicard} and \eqref{eq:pesivett}, it follows 
\[
\begin{cases}
N_1'+\lvert L_{U} \rvert= \lvert L_{U_1} \rvert \\
N_1'+[k]_q=[k+1]_q
\end{cases}
\]
Then $N_1'=q^k$ and $\lvert L_{U_1} \rvert =\lvert L_{U} \rvert+q^k$.
\end{proof}

The above lemma may be generalized in the following way.

\begin{proposition} \label{prop:weightbyintersectioninfinity}
Let $U$ be an $\F_q$-subspace of $V \sam{= \fqn^r}$ such that $\langle U  \rangle_{\F_{q^n}} \ne V$ and let $v \in V$ be such that $\langle v \rangle_{\F_{q^n}} \cap \langle U \rangle_{\F_{q^n}}=\{0\}$ and let $U_1=U\oplus \langle v \rangle_{\fq}$. 
Let $\Omega=\PG(W,\F_{q^n})$ be a subspace of $\PG(r-1,q^n)=\PG(V,\F_{q^n})$.
If $w_{L_{U}}(\Omega)=j>0$ and $ \lvert \Omega \cap (L_{U_1} \setminus L_{U} )\rvert >0$, then $\lvert L_{U_1} \cap \Omega \rvert = \lvert L_{U} \cap \Omega \rvert +q^j$.
\end{proposition}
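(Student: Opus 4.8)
The plan is to reduce the statement to Lemma~\ref{lem:extensionpoint} by cutting everything down to $W$. Set $U'=U\cap W$ and $U_1'=U_1\cap W$. By the description of subspace sections of linear sets recalled in Section~\ref{sec:prel} we have $L_{U_1}\cap\Omega=L_{U_1'}$ and $L_U\cap\Omega=L_{U'}$, so $\Omega\cap(L_{U_1}\setminus L_U)=L_{U_1'}\setminus L_{U'}$ and, by definition, $j=w_{L_U}(\Omega)=\dim_{\fq}U'$. Thus the claim reads exactly $|L_{U_1'}|=|L_{U'}|+q^{j}$, and I would obtain it from the last assertion of Lemma~\ref{lem:extensionpoint} applied to $U'$ and a suitable vector $v'$.

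First I would pin down $\dim_{\fq}U_1'$. The hypothesis $\Omega\cap(L_{U_1}\setminus L_U)\ne\emptyset$ says $L_{U_1'}\supsetneq L_{U'}$, hence $U_1'\supsetneq U'$, which forces $U_1\supsetneq U$; in particular $v\ne 0$, $v\notin U$, and $U$ is a hyperplane of $U_1$. Since $U\cap U_1'=U'$, the inclusion $U'\subseteq U_1'$ together with Grassmann's identity inside $U_1$ gives $\dim_{\fq}U'\le\dim_{\fq}U_1'\le\dim_{\fq}U'+1$, and combined with $U_1'\supsetneq U'$ this yields $\dim_{\fq}U_1'=j+1$. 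Hence $U_1'=U'\oplus\langle v'\rangle_{\fq}$ for any $v'\in U_1'\setminus U'$.

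The step requiring a little thought is then to check that $v'$ sits with respect to $U'$ the way $v$ sits with respect to $U$, namely $\langle v'\rangle_{\fqn}\cap\langle U'\rangle_{\fqn}=\{0\}$ (this also yields $\langle v'\rangle_{\fqn}\notin L_{U'}$, the other hypothesis of Lemma~\ref{lem:extensionpoint}). Writing $v'=u+\alpha v$ with $u\in U$ and $\alpha\in\fq$, one has $\alpha\ne 0$ since $v'\notin U$; then $v=\alpha^{-1}(v'-u)$, so if $v'\in\langle U'\rangle_{\fqn}\subseteq\langle U\rangle_{\fqn}$ we would get $v\in\langle U\rangle_{\fqn}$, contradicting $\langle v\rangle_{\fqn}\cap\langle U\rangle_{\fqn}=\{0\}$. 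With this established, Lemma~\ref{lem:extensionpoint} applied to the $\fq$-subspace $U'$ and the vector $v'$ gives $|L_{U_1'}|=|L_{U'}|+q^{\dim_{\fq}U'}=|L_{U'}|+q^{j}$, i.e.\ $|L_{U_1}\cap\Omega|=|L_U\cap\Omega|+q^{j}$. None of this is genuinely hard; the only real obstacle is the careful bookkeeping of which intersections and spans are taken over $\fq$ versus $\fqn$, and establishing the dimension count $\dim_{\fq}U_1'=j+1$ together with the field-disjointness of $v'$ above is the crux of the argument.
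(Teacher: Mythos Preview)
Your proposal is correct and follows essentially the same route as the paper: both arguments first establish $\dim_{\fq}(W\cap U_1)=j+1$ and then invoke Lemma~\ref{lem:extensionpoint} applied to $U'=U\cap W$ and a vector $v'\in U_1'\setminus U'$. The only differences are cosmetic: the paper obtains the dimension count by a direct contradiction (picking two independent vectors in $U_1'\setminus U'$ and eliminating $v$), whereas you use Grassmann's identity; and you are more explicit than the paper in verifying the hypothesis $\langle v'\rangle_{\fqn}\cap\langle U'\rangle_{\fqn}=\{0\}$ needed for the second part of Lemma~\ref{lem:extensionpoint}, which the paper leaves to the reader.
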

\begin{proof}
By assumptions, $\dim_{\fq}(W\cap U)=j>0$ and $\dim_{\fq}(W\cap U_1)\geq j+1$. By contradiction, suppose that $\dim_{\fq}(W\cap U_1)\geq j+2$, then there exist $w_1,w_2 \in W\cap U_1$ which are $\fq$-linearly independent and such that $\langle w_1,w_2\rangle_{\fq} \cap (W\cap U)=\{0\}$. Moreover, there exist $u_1,u_1' \in U$ and $\alpha,\beta \in \fq^*$ such that
\[ w_1=u_1+\alpha v\,\,\mbox{and}\,\, w_2=u_1'+\beta v, \]
from which we have 
\[ \beta w_1 -\alpha w_2=\beta u_1-\alpha u_1'\in U, \]
that is $\beta w_1 -\alpha w_2 \in U \cap \langle w_1,w_2\rangle_{\fq}=\{0\}$ and hence
\[ \beta w_1=\alpha w_2, \]
a contradiction. 
Therefore, $w_{L_{U_1}}(\Omega)=\dim_{\fq}(W\cap U_1)= j+1$ and the assertion follows now by applying the previous lemma.
\end{proof}

\section{Constructions of point sets in projective spaces with few intersection numbers}\label{sec:constr}

In this section we provide families of a point sets in $\PG(r,q^n)$ having few intersection numbers with respect to hyperplanes. We essentially investigate two families, in particular the second one may be seen as an extension of the translation KM-arcs in a projective space version. This construction extends the notion of hypercylinder.

\subsection{Properties of cones with basis a linear set}

\begin{construct} \label{con:genhyper}
Let $S_1$ be a $(d-1)$-space of $\PG(r-1,q^n)$ and let $S_2=\PG(W,\F_{q^n})$ be an $(r-d-1)$-space complementary to $S_1$ in $\PG(r-1,q^n)$. 
Let $L=L_{U_1}$ be a properly maximum $h$-scattered linear set in $S_1$. Let $\mathcal{D} \subseteq \PG(r-1,q^n)$ be the cone having as base $L$ and as vertex $S_2$, i.e.\ $\mathcal D$ is the union of all lines through a point of $L$ and a point of $S_2$.
Then we have that $\mathcal{D}$ coincides with the $\F_q$-linear set $L_U$ in $\PG(r-1,q^n)$ with $U=U_1 \oplus W$,
see Figure \ref{fig:cons1}.

\bigskip

We can now choose a suitable coordinatization $(x_0,...,x_{r-1})$ for $\PG(r-1,q^n)$ in such a way that:
\begin{itemize}
    \item The $(d-1)$-space $S_1=\{\langle (x_0,...,x_{r-1}) \rangle_{\F_{q^n}} \colon x_d=...=x_{r-1}=0\}$.
    \item The $(r-d-1)$-space $S_2=\{\langle (x_0,...,x_{r-1}) \rangle_{\F_{q^n}} \colon x_0=...=x_{d-1}=0\}$
\end{itemize}
Then we have that $\mathcal{D}=L_U$ is an $\F_q$-linear set of $\PG(r-1,q^n)$, with
\begin{equation} \label{eq:formU}
U:=\{(x_0,\ldots,x_{r-1})\in \fqn^r \colon (x_0,\ldots,x_{d-1}) \in U_1 \}.\end{equation}

\begin{figure}[h!]
    \centering
    \includegraphics[scale=0.5]{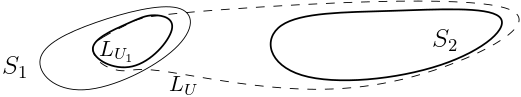}
    \caption{Construction \ref{con:genhyper}.}\label{fig:cons1}
\end{figure}
\end{construct}

\begin{lemma}\label{le:linset}
Let $L_U$ be the $\F_q$-linear set in $\PG(r-1,q^n)$ with $U$ as in \eqref{eq:formU}. Then $L_U$ has rank $\frac{dn}{h+1}+n(r-d)$, all points have weight $0,1$ or $n$ and $|L_U|=q^{n(r-d)}\left[\frac{dn}{h+1}\right]_q+\left[ r-d
 \right]_{q^n}$.
\end{lemma}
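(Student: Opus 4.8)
The plan is to compute the three invariants of $L_U$ directly from the explicit description \eqref{eq:formU}, reducing everything to known facts about the properly maximum $h$-scattered linear set $L_{U_1}$ in $S_1$. First I would compute the rank: $U$ is the preimage of $U_1$ under the projection $\fqn^r \to \fqn^d$ onto the first $d$ coordinates, so $U = U_1 \oplus W'$ where $W' = \{(0,\ldots,0,x_d,\ldots,x_{r-1})\}$ is the $\F_q$-subspace (of $\F_q$-dimension $n(r-d)$) underlying $S_2$. Hence $\dim_{\F_q} U = \dim_{\F_q} U_1 + n(r-d) = \tfrac{dn}{h+1} + n(r-d)$, using that $L_{U_1}$ has rank $\tfrac{dn}{h+1}$ by hypothesis.

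Next I would determine the weights of points. Take a point $P = \langle v \rangle_{\fqn} \in L_U$ and write $v = (v', v'')$ with $v' \in \fqn^d$, $v'' \in \fqn^{r-d}$. The key observation is that $w_{L_U}(P) = \dim_{\F_q}(U \cap \langle v\rangle_{\fqn})$, and a vector $\lambda v = (\lambda v', \lambda v'') \in U$ iff $\lambda v' \in U_1$, a condition that only involves the $S_1$-part. I would split into cases. If $P \in S_2$ (i.e.\ $v' = 0$), then $\langle v \rangle_{\fqn} \subseteq W'\subseteq U$, so $w_{L_U}(P) = n$; this accounts for all of $S_2$ lying in $L_U$. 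If $P \notin S_2$, project $P$ from $S_2$ onto $S_1$ to get the point $\bar P = \langle v' \rangle_{\fqn}$; one checks $\langle v\rangle_{\fqn} \cap U$ maps $\F_q$-linearly isomorphically (via $(\lambda v',\lambda v'') \mapsto \lambda v'$, injective since $\lambda v = 0 \iff \lambda v' = 0$ when $v' \neq 0$) onto $\langle v'\rangle_{\fqn} \cap U_1$. Hence $w_{L_U}(P) = w_{L_{U_1}}(\bar P) \in \{0,1\}$, the latter because $L_{U_1}$ is scattered. So the weights of points of $L_U$ are exactly $0$, $1$, $n$, with weight $n$ occurring precisely on $S_2$ and weight $1$ on the "affine" part lying over $L_{U_1}$.

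For the cardinality I would count as follows. The points of weight $n$ form exactly $S_2 = \PG(r-d-1, q^n)$, contributing $[r-d]_{q^n}$. For the weight-one part: a point $P \notin S_2$ lies in $L_U$ iff its projection $\bar P$ lies in $L_{U_1}$; the fiber over each $\bar P \in L_{U_1}$ of the projection $\PG(r-1,q^n) \setminus S_2 \to S_1$ is an affine space $\AG(r-d,q^n)$ of size $q^{n(r-d)}$. Since $|L_{U_1}| = \left[\tfrac{dn}{h+1}\right]_q$ (it is scattered of that rank), this contributes $q^{n(r-d)}\left[\tfrac{dn}{h+1}\right]_q$, and summing gives the claimed formula. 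Alternatively one can invoke Lemma~\ref{lem:extensionpoint} or Proposition~\ref{prop:weightbyintersectioninfinity} iteratively, adjoining one basis vector of $W$ at a time, each step multiplying the "new points" count by $q^{\dim}$; but the direct fibration argument is cleaner.

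The only genuinely delicate point is the bijection $\langle v\rangle_{\fqn} \cap U \to \langle v'\rangle_{\fqn} \cap U_1$ and checking it is an $\F_q$-linear isomorphism when $v' \neq 0$: surjectivity needs that if $\mu v' \in U_1$ for some $\mu \in \fqn$, then $\mu v'' $ is the correct second coordinate, i.e.\ $\mu v = (\mu v', \mu v'') \in U$ automatically since $U$ only constrains the first $d$ coordinates — this is immediate from \eqref{eq:formU}. Everything else is bookkeeping with dimensions and the defining properties of properly maximum $h$-scattered linear sets (scattered, rank $\tfrac{dn}{h+1}$). I expect no real obstacle, just care in separating the $S_2$-part from the rest.
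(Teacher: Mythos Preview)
Your proposal is correct and follows essentially the same approach as the paper's own proof: compute the rank from the direct-sum decomposition $U = U_1 \oplus W$, split the weight computation into the cases $P \in S_2$ (weight $n$) versus $P \notin S_2$ (reduce to scatteredness of $L_{U_1}$), and obtain the cardinality from the cone/fibration structure over $L_{U_1}$. Your phrasing of the weight-$1$ case via the explicit $\F_q$-linear isomorphism $\langle v\rangle_{\fqn}\cap U \to \langle v'\rangle_{\fqn}\cap U_1$ is slightly more detailed than the paper's argument (which simply notes that $y=\lambda x$ in $U$ forces $\lambda\in\F_q$ by projecting to the first $d$ coordinates), but the underlying idea is identical.
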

\begin{proof}
Since $L_{U_1}$ is a properly maximum $h$-scattered linear set in $\PG(d-1,q^n)$ and so it has rank $\frac{dn}{h+1}$, we have that $L_U$ has rank equal to $\frac{dn}{h+1}+n(r-d)$.
Secondly, consider the weight of the point $P=\langle (x_0,\ldots,x_{r-1}) \rangle_{\F_{q^n}} \in L_U$.
\begin{itemize}
    \item If $P \in S_2$, it automatically follows that $w_{L_U}(P)=n$.
    \item Now suppose that $P \in L_U \setminus S_2$, and let $x=(x_0, \dots, x_{r-1})$, $y=(y_0, \dots, y_{r-1})$ be two non-zero vectors in $U$ whose $\fqn$-span defines $P$.
    Then $y = \lambda x$, for some $\lambda \in \fqn$.
    Since $P \notin S_2$,
    \[
     (x_0, \dots, x_{d-1}) = \lambda (y_0, \dots, y_{d-1})
    \]
    are non-zero vectors of $U_1$.
    Since $U_1$ is scattered, $\lambda$ must be in $\fq$, which proves that $P$ has weight 1.
\end{itemize}
Thirdly, the size of $L_U$ can be determined easily using the fact that $L_U$ is a cone with vertex $S_2$ (which contains 
$[r-d]_{q^n}$
points) and as base $L_{U_1}$ (which contains 
$\left[\frac{dn}{h+1}\right]_q$ points).
\end{proof}

\subsection{The intersection sizes of the cone}\label{sec:intersect}

In this section, we calculate the intersection sizes of the set of points $L_U$ with hyperplanes in $\PG(r-1,q^n)$. 

We start with the following well-known property, which asserts that if we consider a cone with a basis for which we already know the intersection numbers with respect to the hyperplanes in its span, then we can completely determine the intersection numbers with respect the hyperplanes of the associated cone in the entire space.

\begin{proposition} \label{prop:intersectionconehyper}
Let $S_1$ and $S_2$ be a skew $(d-1)$-space and $(r-d-1)$-space in $\PG(r-1,q)$, respectively.
Consider a cone $\mathcal{D}$ with base $B \subset S_1$ and vertex $S_2$.
Suppose that $B$ is of type $(m_i)_{d-2}$ with respect to the hyperplanes of $S_1$. Then each hyperplane $\pi$ satisfies one of the following:
\begin{enumerate}
    \item $S_2 \subset \pi$ and \( |\pi \cap \mathcal{D}| = [r-d]_{q} + m_i q^{r-d} \) for some $i$,
    \item or $S_2 \not \subset \pi$, and \( |\pi \cap \mathcal{D}| =[r-d-1]_{q}+ |B| q^{r-d-1} \).
\end{enumerate}
\end{proposition}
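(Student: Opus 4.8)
The plan is to analyze a hyperplane $\pi$ of $\PG(r-1,q)$ by intersecting it with the vertex $S_2$ and with the base plane $S_1$, and to decompose the cone $\mathcal D$ accordingly. First I would recall that every point of $\mathcal D$ lies on a (unique, away from $S_2$) line joining a point of $B$ to $S_2$, and that $\mathcal D = \bigcup_{P\in B}\la P,S_2\ra$; since $S_1$ and $S_2$ are complementary, $\la P, S_2\ra$ is an $(r-d)$-space meeting $S_1$ exactly in $P$. The two cases of the statement are distinguished by whether $S_2\subseteq\pi$ or not, so I would treat them separately.

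In case (1), $S_2\subseteq\pi$. Then for each point $P\in B$, the subspace $\la P,S_2\ra$ is either contained in $\pi$ (when $P\in\pi$) or meets $\pi$ exactly in $S_2$ (when $P\notin\pi$), because $\la P,S_2\ra$ has dimension one more than $S_2$. Now $\pi\cap S_1$ is a hyperplane of $S_1$ (it cannot be all of $S_1$, since $\la S_1,S_2\ra$ is the whole space and would force $\pi=\PG(r-1,q)$), so by hypothesis $|\pi\cap B| = m_i$ for some $i$. Writing $\mathcal D = S_2 \,\cup\, \bigsqcup_{P\in B}(\la P,S_2\ra\setminus S_2)$, the points of $\pi\cap\mathcal D$ are exactly $S_2$ together with, for each of the $m_i$ points $P\in\pi\cap B$, the $q^{r-d}$ points of $\la P,S_2\ra\setminus S_2$. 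This yields $|\pi\cap\mathcal D| = [r-d]_q + m_i q^{r-d}$.

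In case (2), $S_2\not\subseteq\pi$, so $\pi\cap S_2$ is a hyperplane of $S_2$, of size $[r-d-1]_q$. For a point $P\in B$, the $(r-d)$-space $\la P,S_2\ra$ is not contained in $\pi$ (it contains $S_2$), hence $\pi\cap\la P,S_2\ra$ is a hyperplane of $\la P,S_2\ra$, namely an $(r-d-1)$-space containing $\pi\cap S_2$ and contained in $\la P,S_2\ra$. The key point is that this hyperplane meets $S_2$ precisely in $\pi\cap S_2$: if it contained $S_2$ entirely it would equal $\pi\cap\la P,S_2\ra$ only if $S_2\subset\pi$, contradiction. Thus each line $\la P,Q\ra$ with $Q\in S_2$... more cleanly: counting $|\pi\cap\la P,S_2\ra\setminus S_2| = [r-d]_q - [r-d-1]_q = q^{r-d-1}$ for every $P\in B$. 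Since $\mathcal D\setminus S_2$ is the disjoint union over $P\in B$ of $\la P,S_2\ra\setminus S_2$, we get $|\pi\cap(\mathcal D\setminus S_2)| = |B|\,q^{r-d-1}$, and adding back $|\pi\cap S_2| = [r-d-1]_q$ gives $|\pi\cap\mathcal D| = [r-d-1]_q + |B|q^{r-d-1}$.

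The main obstacle is purely bookkeeping: making sure the decomposition of $\mathcal D$ into the vertex plus the disjoint ``punctured'' $(r-d)$-spaces $\la P,S_2\ra\setminus S_2$ is genuinely disjoint (this uses that $S_1$ and $S_2$ are skew, so two distinct points $P,P'\in B\subseteq S_1$ give subspaces meeting only in $S_2$), and correctly identifying $\pi\cap S_1$ as a hyperplane of $S_1$ so that the type hypothesis on $B$ applies. No real difficulty is expected beyond these dimension-count verifications; the identities $[r-d]_q = [r-d-1]_q + q^{r-d-1}$ and $[r-d]_q = q^{r-d-1}+\cdots+q+1$ handle the arithmetic.
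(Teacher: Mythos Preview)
Your proof is correct. Case~(1) is handled exactly as in the paper: $\pi\cap S_1$ is a hyperplane of $S_1$, so $\pi\cap\mathcal D$ is the cone with vertex $S_2$ over $B\cap\pi$, giving $[r-d]_q + m_i q^{r-d}$ points.

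In Case~(2) you take a slightly different route. The paper chooses a $(d-1)$-space $S_1'\subset\pi$ skew to $S_2':=\pi\cap S_2$, projects $B$ from $S_2$ onto $S_1'$ to obtain $B'$ with $|B'|=|B|$, and observes that $\mathcal D\cap\pi$ is the cone with vertex $S_2'$ and base $B'$, yielding the count immediately. You instead work directly with the decomposition $\mathcal D = S_2\cup\bigsqcup_{P\in B}(\la P,S_2\ra\setminus S_2)$ and compute $|\pi\cap(\la P,S_2\ra\setminus S_2)| = [r-d]_q - [r-d-1]_q = q^{r-d-1}$ for each $P$. Your argument is more elementary in that it avoids the auxiliary projection, while the paper's version has the conceptual advantage of exhibiting $\pi\cap\mathcal D$ again as a cone. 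Both are short and valid; the bookkeeping you flag (disjointness of the punctured subspaces, $\pi\cap S_1$ being a genuine hyperplane of $S_1$) is exactly what is needed.
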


We can now apply the previous proposition to determine the intersection pattern between the hyperplanes and any cone with basis a properly maximum $h$-scattered linear set.

\begin{corollary} \label{lem:intersectioninfinity}
Let $L_U$ be the $\F_q$-linear set in $\PG(r-1,q^n)$ with $U$ as in \eqref{eq:formU}. Consider a hyperplane $\pi$ of $\PG(r-1,q^n)$. Let $L_T=L_U \cap \pi$. Then one of the two cases holds:
\begin{enumerate}
    \item $L_T$ has rank $\gamma_i+n(r-d)$ where $\gamma_i:=\frac{dn}{h+1}-n+i$ and 
    $|L_T|=q^{n(r-d)}\left[\gamma_i\right]_q+[r-d]_{q^n}$, for $i\in \{0,\ldots, h\}$;
    \item $L_T$ has rank $n(r-d-1)+\frac{dn}{h+1}$ and $\lvert L_T \rvert=q^{n(r-d-1)}\left[\frac{dn}{h+1} \right]_q+[r-d-1]_{q^n}$.
    
\end{enumerate}
\end{corollary}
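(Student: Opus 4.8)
The plan is to reduce Corollary~\ref{lem:intersectioninfinity} directly to Proposition~\ref{prop:intersectionconehyper}, using that $L_U = \mathcal{D}$ is the cone with vertex $S_2$ and basis $L = L_{U_1}$, a properly maximum $h$-scattered linear set in $S_1 \cong \PG(d-1,q^n)$. The ambient projective space of the cone is $\PG(r-1,q^n)$, so the role played by $q$ in Proposition~\ref{prop:intersectionconehyper} is now played by $q^n$; concretely, a point of $S_2$ (which is an $(r-d-1)$-space) will contribute $[r-d]_{q^n}$ points, and the projection trick lives over $\fqn$. The only genuine input I need beyond Proposition~\ref{prop:intersectionconehyper} is the intersection pattern of $L_{U_1}$ with the hyperplanes of $S_1$: by Theorem~\ref{th:inter} together with the fact that $L_{U_1}$ is scattered, a hyperplane of $S_1$ of weight $w$ meets $L_{U_1}$ in exactly $[w]_q$ points, and Theorem~\ref{th:intersectionhyper} (or just Theorem~\ref{th:inter}) tells us $w$ ranges over $\{\gamma_i : i = 0,\dots,h\}$ with $\gamma_i = \frac{dn}{h+1} - n + i$, all of which actually occur. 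So $L_{U_1}$ is of type $\big([\gamma_0]_q, \dots, [\gamma_h]_q\big)_{d-2}$ with respect to the hyperplanes of $S_1$.

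First I would handle case (1): a hyperplane $\pi$ with $S_2 \subseteq \pi$. Then $\pi \cap S_1$ is a hyperplane of $S_1$, meeting $L_{U_1}$ in $[\gamma_i]_q$ points for some $i \in \{0,\dots,h\}$, and $\pi \cap L_U$ is the cone with vertex $S_2$ and basis $\pi \cap S_1 \cap L_{U_1}$. Plugging $m_i = [\gamma_i]_q$ into case (1) of Proposition~\ref{prop:intersectionconehyper} (with $q \to q^n$) gives
\[
 |\pi \cap L_U| = [r-d]_{q^n} + [\gamma_i]_q \, q^{n(r-d)},
\]
which is exactly the claimed value in case (1). To get the rank statement, I would invoke Lemma~\ref{le:linset}-style reasoning: $L_T = \pi \cap L_U$ is itself a cone with vertex $S_2$ over the properly-maximum-$h$-scattered-restricted linear set $L_{U_1 \cap W'}$ (where $W'$ is the hyperplane of $S_1$), so by the same rank computation as in Lemma~\ref{le:linset} its rank is $\gamma_i + n(r-d)$. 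Alternatively, one can just note $|L_T| = q^{n(r-d)}[\gamma_i]_q + [r-d]_{q^n}$ forces the rank via the structure of the cone.

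Next, case (2): $S_2 \not\subseteq \pi$. Here $\pi \cap S_2$ is a hyperplane $S_2'$ of $S_2$, i.e.\ an $(r-d-2)$-space, and Proposition~\ref{prop:intersectionconehyper}(2) (again $q \to q^n$) gives
\[
 |\pi \cap L_U| = [r-d-1]_{q^n} + |L_{U_1}| \, q^{n(r-d-1)} = [r-d-1]_{q^n} + \left[\tfrac{dn}{h+1}\right]_q q^{n(r-d-1)},
\]
using $|L_{U_1}| = \big[\frac{dn}{h+1}\big]_q$ since $L_{U_1}$ is scattered of rank $\frac{dn}{h+1}$. This matches case (2) of the corollary; the rank $n(r-d-1) + \frac{dn}{h+1}$ again follows from recognizing $L_T$ as the cone with vertex $S_2'$ over the full (projected) linear set $L_{U_1}$, whose rank adds up as in Lemma~\ref{le:linset}.

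The only real obstacle is that Proposition~\ref{prop:intersectionconehyper} is stated over $\PG(r-1,q)$, so I must be careful that substituting the ground field $\fqn$ for $\fq$ is legitimate — it is, since the proposition is purely combinatorial about cones and says nothing about linear-set structure — and that the \emph{basis} intersection numbers I feed in are correct, for which I rely on the scattered property converting weights into $[\,\cdot\,]_q$ counts and on Theorem~\ref{th:inter} for the range of weights. A minor point to verify is that both types of hyperplane genuinely occur (so that the stated intersection numbers are attained and not merely possible), which is immediate: any hyperplane containing $S_2$ gives type (1), and since $L_{U_1}$ spans $S_1$ one can choose the hyperplane of $S_1$ to realize each weight $\gamma_i$; dually, hyperplanes meeting $S_2$ properly exist and give type (2). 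I would close by remarking that the rank assertions can also be read off purely from the sizes together with the fact, from Lemma~\ref{le:linset}, that in such cones all points have weight $0$, $1$, or $n$.
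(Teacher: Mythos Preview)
Your proposal is correct and follows essentially the same route as the paper: identify $L_U$ as the cone with vertex $S_2$ and basis $L_{U_1}$, feed the hyperplane intersection numbers $[\gamma_i]_q$ of $L_{U_1}$ (from Theorem~\ref{th:inter}/\ref{th:intersectionhyper}) into Proposition~\ref{prop:intersectionconehyper} with $q$ replaced by $q^n$, and read off the two cases. Your treatment is in fact more explicit than the paper's, which simply cites Proposition~\ref{prop:intersectionconehyper} and Theorem~\ref{th:inter} without spelling out the rank computations or the $q \to q^n$ substitution.
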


\begin{proof}
    The $\F_q$-linear set $L_U$ is the cone having as base $B=L_{U_1} \subseteq S_1$ and as a vertex a skew space $S_2=\PG(r-d-1,q^n)$ to $S_1$. By Theorem $\ref{th:inter}$, we know that $B$ is of type $\left([\gamma_i]_q\right)_i$, where $\gamma_i=\frac{dn}{h+1}-n+i$  for $i \in \{0,\ldots,h\}$. Then by Proposition \ref{prop:intersectionconehyper} the assertion follows.
\end{proof}


\subsection{First general construction}\label{sec:constr1}

In the following construction we will consider a cone with an $h$-scattered linear set in $\pi_\infty$.
The classical way is to consider the linear set defined by the chosen linear set in $\pi_\infty$ and an extra affine point. See Figure \ref{fig:cons2}.

\begin{figure}[ht]
    \centering
    \includegraphics[scale=0.4]{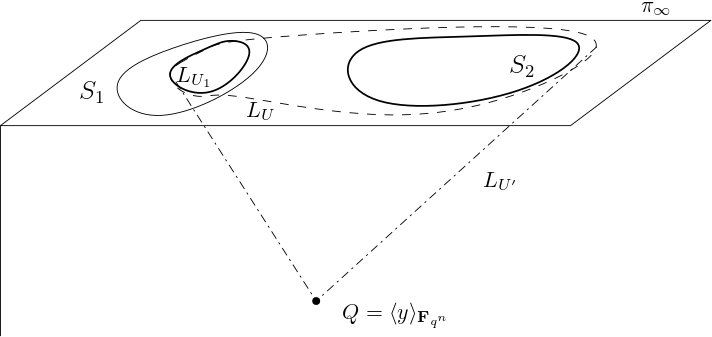}
    \caption{Theorem  \ref{th:intersectsizes}.}\label{fig:cons2}
\end{figure}

\begin{theorem}\label{th:intersectsizes}
Let $\mathcal{D}=L_{U} \subseteq \pi_{\infty}:=\PG(r-1,q^n)$ be as in Construction \ref{con:genhyper} and embedded in $\PG(r,q^n)$. Let $Q=\langle y \rangle_{\F_{q^n}} \in \PG(r,q^n) \setminus \pi_{\infty}$. Let $U'=U \oplus \langle y \rangle_{\F_{q}}$.
Then the point set $\mathcal{B}=L_{U'} \subseteq \PG(r,q^n)$ has size
\[
q^{n(r-d)}\left[\frac{dn}{h+1}+1\right]_q+[r-d]_{q^n}
\]
and $\mathcal{B}$ is of type
\begin{center}
\scalebox{0.9}{%
$\left(q^{n(r-d)}\left[\frac{dn}{h+1}\right]_q+[r-d]_{q^n}, q^{n(r-d-1)}\left[\frac{dn}{h+1}+1\right]_q+[r-d-1]_{q^n},
q^{n(r-d)}\left([\gamma_i]_q +\beta_{i,j}\right) +[r-d]_{q^n}
\right)_{r-1}$,
}
\end{center}

where for $i\in\{0,...,h\}$ and $j \in \{0,1\}$, $\gamma_i:=\frac{dn}{h+1}-n+i$, and 
\[\beta_{i,j} = \begin{cases}
0 & \text{if } j=0 \text{ and } i \geq 1\\
q^{\gamma_i} & \text{otherwise}.
\end{cases}
\]
\end{theorem}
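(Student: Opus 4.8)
\emph{Proof strategy.} The plan is to first reduce to convenient coordinates and dispatch the cardinality, and then analyse the hyperplanes one at a time. Applying a collineation of $\PG(r,q^n)$ that fixes $\pi_\infty$ pointwise, I may assume $\pi_\infty=\{x_r=0\}$ and $Q=\la e_r\ra_{\fqn}$; this alters neither $\mathcal D=L_U$ nor any hyperplane intersection number. Then $U'=U\oplus\la e_r\ra_{\F_q}$, $\mathcal B\cap\pi_\infty=L_U$, and $\mathcal B\setminus\pi_\infty=\{\la u+e_r\ra_{\fqn}:u\in U\}$ is a set of $q^{k}$ distinct points, where $k=\dim_{\F_q}U=\frac{dn}{h+1}+n(r-d)$ is the rank of $L_U$ by Lemma~\ref{le:linset}. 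Since $\la U\ra_{\fqn}$ is contained in $\{x_r=0\}$ while $e_r$ is not, $\la e_r\ra_{\fqn}\cap\la U\ra_{\fqn}=\{0\}$, so Lemma~\ref{lem:extensionpoint} gives $|\mathcal B|=|L_U|+q^{k}$; inserting the value of $|L_U|$ from Lemma~\ref{le:linset} and using $[m]_q+q^{m}=[m+1]_q$ produces the claimed cardinality.

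Now fix a hyperplane $\pi$. If $\pi=\pi_\infty$, then $\mathcal B\cap\pi=L_U$ has size $q^{n(r-d)}[\frac{dn}{h+1}]_q+[r-d]_{q^n}$, the first listed value. Otherwise put $\sigma:=\pi\cap\pi_\infty$, a hyperplane of $\pi_\infty$, and write $\pi$ as the zero set of the form $a_0x_0+\dots+a_rx_r$, where $(a_0,\dots,a_{r-1})\neq0$ is exactly the condition $\pi\neq\pi_\infty$ and $\sigma$ is cut out on $\pi_\infty$ by $(a_0,\dots,a_{r-1})$. The key tool is the $\F_q$-linear map $\phi\colon U\to\fqn$, $\phi(u)=a_0u_0+\dots+a_{r-1}u_{r-1}$: its kernel is $U$ intersected with the underlying space of $\sigma$, so $\dim_{\F_q}\ker\phi=w_{L_U}(\sigma)=:j^*$ and $\dim_{\F_q}\phi(U)=k-j^*$; moreover, evaluating the form on $u+e_r$ shows that $\pi$ contains a point of $\mathcal B\setminus\pi_\infty$ if and only if $-a_r\in\phi(U)$. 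By Corollary~\ref{lem:intersectioninfinity} applied to $\sigma$, exactly one of the following holds: (i) $S_2\subseteq\sigma$, whence $j^*=\gamma_i+n(r-d)$ and $|L_U\cap\sigma|=q^{n(r-d)}[\gamma_i]_q+[r-d]_{q^n}$ for some $i\in\{0,\dots,h\}$, so $\dim_{\F_q}\phi(U)=n-i$; or (ii) $S_2\not\subseteq\sigma$, whence $j^*=n(r-d-1)+\frac{dn}{h+1}$ and $|L_U\cap\sigma|=q^{n(r-d-1)}[\frac{dn}{h+1}]_q+[r-d-1]_{q^n}$, so $\dim_{\F_q}\phi(U)=n$, i.e.\ $\phi(U)=\fqn$.

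I would then split according to whether $\pi$ meets $\mathcal B\setminus\pi_\infty$. If it does not, then $\phi(U)\subsetneq\fqn$, which forces case (i) with $i\geq1$; here $\mathcal B\cap\pi=L_U\cap\sigma$ has size $q^{n(r-d)}[\gamma_i]_q+[r-d]_{q^n}$, the third listed value with $j=0$ (and indeed $\beta_{i,0}=0$ precisely when $i\geq1$). If it does, then Proposition~\ref{prop:weightbyintersectioninfinity} applies with $\Omega=\pi$ and $v=e_r$ — its hypotheses $\la e_r\ra_{\fqn}\cap\la U\ra_{\fqn}=\{0\}$, $\la U\ra_{\fqn}\neq\fqn^{r+1}$, $w_{L_U}(\pi)=j^*>0$, and $\pi\cap(L_{U'}\setminus L_U)\neq\emptyset$ all being in force — and gives $|\mathcal B\cap\pi|=|L_U\cap\sigma|+q^{j^*}$: in case (i) this is $q^{n(r-d)}\bigl([\gamma_i]_q+q^{\gamma_i}\bigr)+[r-d]_{q^n}$, the third value with $j=1$; in case (ii) it is $q^{n(r-d-1)}\bigl([\frac{dn}{h+1}]_q+q^{\frac{dn}{h+1}}\bigr)+[r-d-1]_{q^n}=q^{n(r-d-1)}[\frac{dn}{h+1}+1]_q+[r-d-1]_{q^n}$, the second value. (The sole configuration with $j^*=0$ is the degenerate $r=d=h+1$, $i=0$; there $L_U\cap\sigma=\emptyset$, yet $\phi(U)=\fqn$ still forces a point of $\mathcal B\setminus\pi_\infty$ in $\pi$, and one counts $|\mathcal B\cap\pi|=1=q^{0}\bigl([\gamma_0]_q+q^{\gamma_0}\bigr)+[0]_{q^n}$, consistently with $\beta_{0,0}=\beta_{0,1}$.) That every listed value actually occurs follows from $t_i>0$ in Theorem~\ref{th:intersectionhyper}: $L_{U_1}$ has hyperplanes of every weight $\gamma_i$, which cone up to hyperplanes $\sigma$ as in (i); any $\sigma$ not through $S_2$ realises (ii); and for a fixed $\sigma$ as in (i) with $i\geq1$, where $\phi(U)$ is proper, $a_r$ may be chosen with $-a_r\in\phi(U)$ or $-a_r\notin\phi(U)$, realising $j=1$ and $j=0$ — the pair $(i,j)=(0,0)$ need not occur on its own, its value agreeing with that of $(1,0)$ because $\gamma_0+1=\gamma_1$.

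The step I expect to be the main obstacle is the one carried out in the two preceding paragraphs: recognising that whether a hyperplane $\pi\neq\pi_\infty$ picks up the extra $q^{j^*}$ points of $\mathcal B$ lying off $\pi_\infty$ is governed purely by membership of the single scalar $-a_r$ in $\phi(U)$, and then pinning down $\dim_{\F_q}\phi(U)$ from $w_{L_U}(\sigma)$ via Corollary~\ref{lem:intersectioninfinity} — in particular seeing that $\phi(U)=\fqn$ exactly when the weight at infinity is minimal ($i=0$) or $S_2\not\subseteq\pi$, which is precisely the dichotomy recorded by the piecewise definition of $\beta_{i,j}$. Beyond that, the argument is bookkeeping with the quantities $[m]_q$ together with direct invocations of Lemma~\ref{lem:extensionpoint}, Proposition~\ref{prop:weightbyintersectioninfinity}, and Corollary~\ref{lem:intersectioninfinity}.
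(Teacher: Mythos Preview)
Your proof is correct and follows essentially the same route as the paper: both compute $|\mathcal B|$ via Lemma~\ref{lem:extensionpoint} and Lemma~\ref{le:linset}, classify hyperplanes through their trace $\sigma=\pi\cap\pi_\infty$ using Corollary~\ref{lem:intersectioninfinity}, and then invoke Proposition~\ref{prop:weightbyintersectioninfinity} to add the $q^{j^*}$ affine points when present. The only real difference is in how the ``does $\pi$ meet $\mathcal B\setminus\pi_\infty$?'' dichotomy is resolved: the paper argues by counting (the $q^n$ hyperplanes through $\sigma$ partition the $q^{k}$ affine points into blocks of size $q^{j^*}$, so an empty block exists iff $k-j^*<n$), whereas you introduce the explicit $\F_q$-linear map $\phi\colon U\to\fqn$ and the membership test $-a_r\in\phi(U)$, reading off $\dim_{\F_q}\phi(U)=k-j^*=n-i$ directly. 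These are the same observation in different clothes; your formulation has the advantage of making the realization of each listed value constructive (pick $a_r$ inside or outside $\phi(U)$), and of cleanly explaining why the case $(i,j)=(0,0)$ is redundant.
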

\begin{proof}
Consider $\pi_\infty:=\PG(r-1,q^n)$ as a subspace of $\PG(r,q^n)$. 
We first make the convention to consider $\pi_\infty$ as the hyperplane at infinity, so we can easily describe the points of $\PG(r,q^n)\setminus \pi_\infty$ as affine points.
Since $L_{U}$ has rank $\frac{dn}{h+1}+n(r-d)$, it follows from Lemma \ref{lem:extensionpoint} and Lemma \ref{le:linset} that 
\begin{equation*}
    \begin{split}
        \lvert\mathcal{B}\rvert&=\lvert L_U\rvert+q^{\frac{dn}{h+1}+n(r-d)}\\
        &=q^{n(r-d)}\left[\frac{dn}{h+1}\right]_q+[r-d]_{q^n}+q^{\frac{dn}{h+1}+n(r-d)}\\
        &=q^{n(r-d)}\left[\frac{dn}{h+1}+1\right]_q+[r-d]_{q^n}
    \end{split}
\end{equation*}
Secondly, we are interested in the intersection sizes with respect to the hyperplanes. This boils down to two cases: let $\pi$ be an arbitrary $(r-1)$-space of $\PG(r,q^n)$ then either $\pi=\pi_{\infty}$ or $\pi\ne\pi_{\infty}$. Suppose that $\pi=\pi_\infty$: in this case clearly
    \begin{equation*}
        \begin{split}
            \lvert \mathcal{B} \cap \pi \rvert &= \lvert L_U \rvert= q^{n(r-d)}\left[\frac{dn}{h+1}\right]_q+[r-d]_{q^n}.
        \end{split}
    \end{equation*}
    If $\pi\not= \pi_\infty$, we first determine the number of points at infinity in $\pi\cap \mathcal{B}$ and then determine the number of affine points, that is $|\mathcal B \cap \pi| = |L_T|+\lvert (\mathcal{B} \cap \pi)\setminus L_T |$, where  $L_T:=\pi\cap \pi_\infty\cap L_U$ and whose rank is $w_{L_U}(\pi\cap\pi_{\infty})$. By Corollary \ref{lem:intersectioninfinity}, $L_T$ is a linear set (as $\pi\cap \pi_{\infty}$ defines a hyperplane of $\pi_{\infty}$) such that one of the two cases holds:
\begin{enumerate}
    \item[1)] $L_T$ has rank $n(r-d-1)+\frac{dn}{h+1}$ and $\lvert L_T \rvert=q^{n(r-d-1)}\left[\frac{dn}{h+1}\right]_q+[r-d-1]_{q^n}$;
    \item[2)] $L_T$ has rank $\gamma_i+n(r-d)$ and  
$|L_T|=q^{n(r-d)}[\gamma_i]_q+[r-d]_{q^n}$,
for $i\in \{0,\cdots, h\}$.
\end{enumerate}
Now we find the number of affine points in $\pi \cap \mathcal{B}$, that is $\lvert (\mathcal{B} \cap \pi)\setminus L_T |$.
    From Proposition \ref{prop:weightbyintersectioninfinity} we find that 
    $$\lvert (\mathcal{B} \cap \pi)\setminus L_T |\in \{0, q^{\gamma_i+n(r-d)},q^{\frac{nd}{h+1}+n(r-d-1)}\}$$
    Note that if a hyperplane $\pi'$ through $\pi\cap \pi_{\infty}$ different from $\pi_{\infty}$ has at least one point in $\mathcal{B}\setminus \pi_{\infty}$, then $|\pi'\cap \mathcal{B}|=q^{\rk(L_T)}+|L_T|$. Therefore, any hyperplane of $\PG(r,q^n)$ meets $\mathcal{B}$ in one of the following number of points:
\begin{itemize}
    \item[a)] $q^{n(r-d)}\left[\frac{dn}{h+1}\right]_q+[r-d]_{q^n}$;
    \item[b)] $ q^{n(r-d-1)}\left[\frac{dn}{h+1}\right]_q+[r-d-1]_{q^n}$ (arising from 1));
    \item[c)] $ q^{n(r-d-1)}\left[\frac{dn}{h+1}+1\right]_q+[r-d-1]_{q^n}$ (arising from 1));
    \item[d)] $q^{n(r-d)}[\gamma_i]_q +[r-d]_{q^n}$ (arising from 2));
    \item[e)]  $q^{n(r-d)}([\gamma_i]_q+q^{\gamma_i}) +[r-d]_{q^n}$ (arising from 2)),
\end{itemize}
for any $i \in \{0,\ldots,h\}$.
We prove that hyperplanes meeting $\mathcal{B}$ in the number of points listed in a)-e) always exist expect for the case d) with $i=0$. Indeed, a) is obtained considering the hyperplane at infinity. Consider now $\tau$ a hyperplane of $\pi_{\infty}$, we know that the hyperplanes (different from $\pi_\infty$) through $\tau$ partition the (affine) points of $\mathcal{B}\setminus  \pi_\infty$. This implies the existence of hyperplanes meeting $\mathcal{B}$ in the number points c) and e) (choosing $\tau$ having weight $n(r-d-1)+\frac{dn}{h+1}$ or $\gamma_i+n(r-d)$ with respect to $L_{U'}$, respectively). Moreover, a hyperplane meeting $\mathcal{B}$ in the number of points b) and d) only exists if there exists at least one hyperplane through $\tau$ which does not have any affine point of $\mathcal{B}$.
Using Proposition \ref{prop:weightbyintersectioninfinity}, we know that
$$\lvert\mathcal{B}\setminus \pi_\infty \rvert=q^{\rk(L_U)}.$$
We also know that there are $q^n$ such hyperplanes different from $\pi_\infty$, 
therefore if
$$q^n\cdot q^{w_{L_{U}}(\tau)}=q^{\rk(L_U)},$$
it follows that there do not exist hyperplanes thorugh $\tau$ that intersect $\mathcal{B}\setminus \pi_\infty$ trivially.
Taking into account that $\rk(L_U)= \frac{dn}{h+1}+ n(r-d)$, we find that
there are hyperplanes meeting $\mathcal{B}\setminus \pi_\infty$ trivially except for the cases b), and e) for $i=0$.
This allows us to completely determine the intersection numbers with respect to the hyperplanes, completing the proof.
\end{proof}

\begin{remark}\label{rk:weighthyperconstrB}
In the proof of the above theorem, we may also deduce the possible values of the weight of the hyperplanes with respect to $L_{U'}$, which are:
\begin{itemize}
    \item $n(r-d)+\frac{dn}{h+1}$;
    \item $n(r-d-1)+\frac{dn}{h+1}+1$ (which happens if and only if $r>d$);
    \item $n(r-d)+\gamma_i$;
    \item $n(r-d)+\gamma_j+1$,
\end{itemize}
for any $i,j \in \{0,\ldots,h\}$.
\end{remark}

When choosing $r=d$ in Construction \ref{con:genhyper},  and taking into account Remark \ref{rk:weighthyperconstrB}, Theorem \ref{th:intersectsizes} simplifies to the following.

\begin{corollary}
\label{th:constrnocompl}
Suppose that $\pi_\infty:=\PG(r-1,q^n)$, for $r\geq 2$ and $n\geq 2$, is embedded in $\PG(r,q^n)$. Suppose that $L_{U}$ is a properly maximum $h$-scattered linear set in $\pi_\infty$ and $Q=\langle y\rangle_{\mathbb{F}_{q^n}}$ is a point in $\PG(r,q^n)\setminus\pi_\infty$. If $U'=U \oplus \langle y\rangle_{\mathbb{F}_{q}}$, then the set $\mathcal{B}=L_{U'}$ is a set of type
\begin{center}
$\left( 
\left[\frac{rn}{h+1}\right]_q, [\gamma_i]_q+\beta_{i,j}
\right)_{r-1}$,
\end{center}
where for $i\in\{0,...,h\}$ and $j \in \{0,1\}$, $\gamma_i:=\frac{dn}{h+1}-n+i$, and 
\[\beta_{i,j} = \begin{cases}
0 & \text{if } j=0 \text{ and } i \geq 1\\
q^{\gamma_i} & \text{otherwise}.
\end{cases}
\]
Finally, we have that $\lvert \mathcal{B} \rvert=\left[\frac{rn}{h+1}+1\right]_q$.
\end{corollary}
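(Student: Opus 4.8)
The plan is to obtain Corollary~\ref{th:constrnocompl} as the degenerate instance $d=r$ of Theorem~\ref{th:intersectsizes}. First I would check that the hypotheses line up. Taking $d=r$ in Construction~\ref{con:genhyper} forces the complementary space $S_2=\PG(W,\fqn)$ to be empty (so $W=\{0\}$); hence the cone $\mathcal D$ degenerates to its own base, $\mathcal D=L=L_{U_1}$, a properly maximum $h$-scattered linear set in $S_1=\pi_\infty=\PG(r-1,q^n)$, and by \eqref{eq:formU} we have $U=U_1$. Thus the data of Corollary~\ref{th:constrnocompl} --- a properly maximum $h$-scattered linear set $L_U$ in $\pi_\infty$, an affine point $Q=\langle y\rangle_{\fqn}\notin\pi_\infty$, and $U'=U\oplus\langle y\rangle_{\fq}$ --- is exactly the $d=r$ specialisation of the setup of Theorem~\ref{th:intersectsizes}.

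Next I would specialise the conclusion of Theorem~\ref{th:intersectsizes} by setting $d=r$, using $q^{n(r-d)}=q^{0}=1$ and $[r-d]_{q^n}=[0]_{q^n}=0$. The size $q^{n(r-d)}\left[\frac{dn}{h+1}+1\right]_q+[r-d]_{q^n}$ then collapses to $\left[\frac{rn}{h+1}+1\right]_q$, the first intersection number to $\left[\frac{rn}{h+1}\right]_q$, and the third to $[\gamma_i]_q+\beta_{i,j}$ with $\gamma_i=\frac{rn}{h+1}-n+i$. It remains to rule out the middle intersection number $q^{n(r-d-1)}\left[\frac{dn}{h+1}+1\right]_q+[r-d-1]_{q^n}$, which is only meaningful when $r>d$. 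For this I would invoke Remark~\ref{rk:weighthyperconstrB}: the hyperplane weight $n(r-d-1)+\frac{dn}{h+1}+1$ responsible for this value occurs precisely when $r>d$. Equivalently and more directly, since $\pi_\infty=S_1$, every hyperplane of $\pi_\infty$ meets $L_U$ in $[\gamma_i]_q$ points for some $i\in\{0,\dots,h\}$ by Theorem~\ref{th:inter}, so only the ``$\gamma_i$''-alternative of Corollary~\ref{lem:intersectioninfinity} can occur; the generic-cone intersection numbers (items b) and c) in the proof of Theorem~\ref{th:intersectsizes}) therefore never arise. Hence the possible intersection numbers are exactly $\left[\frac{rn}{h+1}\right]_q$ and the $[\gamma_i]_q+\beta_{i,j}$, $i\in\{0,\dots,h\}$, $j\in\{0,1\}$.

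Finally, the stronger claim that $\mathcal B$ is of type $(\dots)_{r-1}$, i.e.\ that every listed value is actually attained, follows by specialising to $d=r$ the realisability argument of Theorem~\ref{th:intersectsizes}: there all candidate intersection numbers are shown to occur except the one labelled d) with $i=0$, namely the bare value $[\gamma_0]_q$, and this exception is harmless because $\beta_{0,0}=q^{\gamma_0}$, so $[\gamma_0]_q$ (without the extra summand $q^{\gamma_0}$) never appears among the values $[\gamma_i]_q+\beta_{i,j}$ in the statement. Concretely: $\left[\frac{rn}{h+1}\right]_q$ is realised by $\pi=\pi_\infty$, and for $\tau$ a hyperplane of $\pi_\infty$ of weight $\gamma_i$ with respect to $L_U$ the $q^n$ hyperplanes of $\PG(r,q^n)$ through $\tau$ other than $\pi_\infty$ partition the $q^{rn/(h+1)}$ affine points of $\mathcal B$, so by Proposition~\ref{prop:weightbyintersectioninfinity} each meets $\mathcal B$ in $[\gamma_i]_q$ or $[\gamma_i]_q+q^{\gamma_i}$ points, the former occurring iff some such hyperplane misses $\mathcal B\setminus\pi_\infty$, i.e.\ iff $q^{rn/(h+1)-\gamma_i}<q^n$, i.e.\ iff $i\geq 1$. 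I do not anticipate any genuine obstacle: the corollary is essentially the $d=r$ substitution into Theorem~\ref{th:intersectsizes}, the only points needing care being this bookkeeping around $i=0$ and the observation that the generic-cone intersection numbers cannot occur in the absence of a proper vertex.
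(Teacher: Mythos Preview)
Your proposal is correct and follows essentially the same route as the paper: the paper's proof consists of the single sentence that the corollary is obtained by choosing $d=r$ in Construction~\ref{con:genhyper} and Theorem~\ref{th:intersectsizes}, together with Remark~\ref{rk:weighthyperconstrB} to discard the intersection number coming from the generic-cone case. You carry out this specialisation explicitly and add the realisability bookkeeping (including the care around $i=0$), which the paper leaves implicit in its appeal to Theorem~\ref{th:intersectsizes}; there is no substantive difference in strategy.
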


\begin{remark}
Note that, using Theorem \ref{th:intersectionhyper}, one can actually determine also the number of hyperplanes meeting $\mathcal{B}$ in the number of points described in Corollary \ref{th:constrnocompl} which strongly depends on the $t_i$'s of Theorem \ref{th:intersectionhyper}. 
\end{remark}

\subsection{Second general construction}\label{sec:constr2}

The second construction we analyze corresponds to taking the same affine points as in the previous construction together with its complement at infinity. Also for this construction we are able to determine the pattern of intersection with the hyperplanes.
For the next theorem we refer again to Figure \ref{fig:cons2}.

\begin{theorem}\label{th:intersectsizes2}
Let $\mathcal{D}=L_{U} \subseteq \pi_{\infty}:=\PG(r-1,q^n)$ be as in Construction \ref{con:genhyper} and embed it in $\PG(r,q^n)$. Let $Q=\langle y \rangle_{\F_{q^n}} \in \PG(r,q^n) \setminus \pi_{\infty}$. Let $U'=U \oplus \langle y \rangle_{\F_{q}}$.
Define $\mathcal{K}:=(L_{U'}\setminus L_U)\cup (\pi_\infty \setminus L_U)$. 
 We have that
 $|\mathcal{K}|=q^{n(r-d)}\left([d]_{q^n} -\left[\frac{dn}{h+1}\right]_q+ q^{\frac{dn}{h+1}} \right) $

\begin{center}
\scalebox{0.75}{%
$\left( q^{n(r-d)}\left([d]_{q^n} -\left[\frac{dn}{h+1}\right]_q\right),
q^{n(r-d-1)}\left([d]_{q^n}-\left[\frac{dn}{h+1}\right]_q+ q^{\frac{dn}{h+1}}\right), 
q^{n(r-d)}\left( [d-1]_{q^n}-[\gamma_i]_q+ \beta_{i,j}\right)
\right)_{r-1}$,
}
\end{center}
%
where for $i\in\{0,...,h\}$ and $j \in \{0,1\}$, $\gamma_i:=\frac{dn}{h+1}-n+i$, and 
\[\beta_{i,j} = \begin{cases}
0 & \text{if } j=0 \text{ and } i \geq 1\\
q^{\gamma_i} & \text{otherwise}.
\end{cases}
\]

In particular if $q=2$, we obtain that $\mathcal{K}$ is of type 
\begin{center}
\scalebox{0.75}{%
$\left( 2^{n(r-d)}\left([d]_{2^n} -2^\frac{dn}{h+1}+1\right),
2^{n(r-d-1)}\left([d]_{2^n} +1\right), 
2^{n(r-d)}\left([d-1]_{2^n} -2^{\gamma_i}+1+ \beta_{i,j}\right)
\right)_{r-1}$
}
\end{center}
which in total gives $2h+3$ possibilities. In particular, $|\mathcal{K}|=2^{n(r-d)}\left( [d]_{2^n}+1 \right)$.
\end{theorem}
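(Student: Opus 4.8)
The plan is to obtain Theorem~\ref{th:intersectsizes2} almost entirely as a bookkeeping consequence of Theorem~\ref{th:intersectsizes}, using only the fact that $\mathcal K$ is, by construction, the ``complement inside $\pi_\infty$ plus the same affine part'' of $\mathcal B = L_{U'}$. The key identity is that for any hyperplane $\pi$ of $\PG(r,q^n)$,
\[
 |\mathcal K \cap \pi| = |\pi \cap (\pi_\infty \setminus L_U)| + |\pi \cap (\mathcal B \setminus \pi_\infty)|
 = \bigl(|\pi \cap \pi_\infty| - |\pi \cap \pi_\infty \cap L_U|\bigr) + \bigl(|\mathcal B \cap \pi| - |\pi \cap \pi_\infty \cap L_U|\bigr).
\]
If $\pi = \pi_\infty$ this reads $|\mathcal K \cap \pi_\infty| = |\pi_\infty| - |L_U| = [r]_{q^n} - \bigl(q^{n(r-d)}[\tfrac{dn}{h+1}]_q + [r-d]_{q^n}\bigr)$, which a short computation rewrites as $q^{n(r-d)}\bigl([d]_{q^n} - [\tfrac{dn}{h+1}]_q\bigr)$ (using $[r]_{q^n} = [r-d]_{q^n} + q^{n(r-d)}[d]_{q^n}$). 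If $\pi \neq \pi_\infty$, then $|\pi \cap \pi_\infty| = [r-1]_{q^n}$ and $|\pi \cap \pi_\infty \cap L_U| = |L_T|$ is exactly the quantity $|L_T|$ described in Corollary~\ref{lem:intersectioninfinity}, namely either $q^{n(r-d-1)}[\tfrac{dn}{h+1}]_q + [r-d-1]_{q^n}$ (case 1) or $q^{n(r-d)}[\gamma_i]_q + [r-d]_{q^n}$ (case 2). Substituting these, together with the already-known values of $|\mathcal B \cap \pi|$ from the proof of Theorem~\ref{th:intersectsizes} (items a)--e) there), gives $|\mathcal K \cap \pi| = [r-1]_{q^n} + |\mathcal B \cap \pi| - 2|L_T|$, and one simply plugs in each subcase.

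Concretely, the first step is to compute $|\mathcal K|$ directly from the $\pi = \pi_\infty$ computation plus $|\mathcal B \setminus \pi_\infty| = q^{\rk(L_U)} = q^{dn/(h+1)+n(r-d)}$, yielding $|\mathcal K| = q^{n(r-d)}\bigl([d]_{q^n} - [\tfrac{dn}{h+1}]_q + q^{dn/(h+1)}\bigr)$ after collecting the $q^{n(r-d)}$ factor. The second step is the hyperplane count: for $\pi \neq \pi_\infty$ there are two families. In case~1 ($w_{L_U}(\pi\cap\pi_\infty) = n(r-d-1)+\tfrac{dn}{h+1}$) we get $|\mathcal K\cap\pi| = [r-1]_{q^n} + |\mathcal B\cap\pi| - 2\bigl(q^{n(r-d-1)}[\tfrac{dn}{h+1}]_q + [r-d-1]_{q^n}\bigr)$, which with the two possible values of $|\mathcal B\cap\pi|$ (items b) and c) of the earlier proof) and the identity $[r-1]_{q^n} = [r-d-1]_{q^n} + q^{n(r-d-1)}[d]_{q^n}$ collapses to $q^{n(r-d-1)}\bigl([d]_{q^n} - [\tfrac{dn}{h+1}]_q\bigr)$ in the ``no affine point'' subcase and to $q^{n(r-d-1)}\bigl([d]_{q^n} - [\tfrac{dn}{h+1}]_q + q^{dn/(h+1)}\bigr)$ in the other; only the latter survives, since by Theorem~\ref{th:intersectsizes} every hyperplane through such a $\tau$ meets $\mathcal B$ affinely. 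In case~2 a parallel computation using $|\mathcal B\cap\pi|$ from items d) and e) and the identity $[r-1]_{q^n} = [r-d]_{q^n} + q^{n(r-d)}[d-1]_{q^n}$ gives $|\mathcal K\cap\pi| = q^{n(r-d)}\bigl([d-1]_{q^n} - [\gamma_i]_q + \beta_{i,j}\bigr)$ for $j\in\{0,1\}$, $i\in\{0,\dots,h\}$, with the same exception (item d) with $i=0$ does not occur, and case e) with $i=0$ is forced to occur) inherited verbatim from Theorem~\ref{th:intersectsizes}. That produces exactly the three listed types.

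The third step is the $q=2$ specialization: set $q=2$, use $[m]_2 = 2^m - 1$ throughout, and simplify each of the three expressions. For instance $q^{n(r-d-1)}([d]_{2^n} - [\tfrac{dn}{h+1}]_2 + 2^{dn/(h+1)}) = 2^{n(r-d-1)}([d]_{2^n} - (2^{dn/(h+1)}-1) + 2^{dn/(h+1)}) = 2^{n(r-d-1)}([d]_{2^n}+1)$, and similarly the third family becomes $2^{n(r-d)}([d-1]_{2^n} - 2^{\gamma_i} + 1 + \beta_{i,j})$; the first becomes $2^{n(r-d)}([d]_{2^n} - 2^{dn/(h+1)} + 1)$. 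Counting the distinct values: one from $\pi_\infty$, one from case~1, and $2(h+1)$ a priori from case~2 (pairs $(i,j)$), but the pair $(0,0)$ is excluded, so $2h+1$; total $2h+3$, and $|\mathcal K| = 2^{n(r-d)}([d]_{2^n}+1)$ after the same simplification of $|\mathcal K|$. I do not expect a genuine obstacle here: the whole argument is ``apply Theorem~\ref{th:intersectsizes}, take complements, simplify''. The one place that needs care is tracking which of the $2(h+1)$ values in case~2 actually occur as intersection sizes (the $(0,0)$ exclusion) and making sure the $q=2$ simplification does not accidentally merge two of the remaining values into one --- i.e.\ verifying the count ``$2h+3$'' rather than fewer; this requires checking that the three families $[d]_{2^n}-2^{dn/(h+1)}+1$, $[d]_{2^n}+1$ (scaled by different powers of $2$), and $[d-1]_{2^n}-2^{\gamma_i}+1+\beta_{i,j}$ are pairwise distinct for generic parameters, which follows from comparing leading terms and the distinctness of the $\gamma_i$.
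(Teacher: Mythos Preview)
Your proposal is correct and follows essentially the same approach as the paper. Both arguments decompose $|\mathcal K\cap\pi|$ into the ``complement of $L_U$ inside $\pi\cap\pi_\infty$'' plus the affine part $|(\mathcal B\cap\pi)\setminus\pi_\infty|$, then invoke Corollary~\ref{lem:intersectioninfinity} for the possible values of $|L_T|$ and the case analysis from the proof of Theorem~\ref{th:intersectsizes} (items a)--e) and the exclusion of d) with $i=0$) to determine which combinations actually occur; your compact identity $|\mathcal K\cap\pi|=[r-1]_{q^n}+|\mathcal B\cap\pi|-2|L_T|$ is just a one-line repackaging of the paper's \eqref{eq:countK}.
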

\begin{proof}
The intersection sizes with a hyperplane $\pi$ and $\mathcal{K}$ can be derived from Lemma \ref{lem:intersectioninfinity} and Proposition \ref{prop:weightbyintersectioninfinity} for the affine part. In particular, we always have that
\begin{equation}\label{eq:countK}
    \begin{split}
        \lvert \mathcal{K} \cap \pi \rvert &= \lvert \mathcal{K} \cap \pi \cap \pi_\infty \rvert +\lvert (\mathcal{K} \cap \pi)\setminus \pi_\infty  \rvert\\
        &=
        \begin{cases}
        [r]_{q^n}-|L_U \cap \pi_{\infty}| & \mbox{ if } \pi=\pi_{\infty},\\
        [r-1]_{q^n}-|L_U \cap \pi|+\lvert (\mathcal{K} \cap \pi)\setminus \pi_\infty \rvert & \mbox{ if } \pi \neq \pi_{\infty}.
        \end{cases}
    \end{split}
\end{equation}

Here we can find $\lvert L_U \cap \pi\rvert$ for every type of hyperplane, this is one of the values determined in Corollary \ref{lem:intersectioninfinity}.
Using Proposition \ref{prop:weightbyintersectioninfinity}, we find that $\lvert (\mathcal{K} \cap \pi)\setminus \pi_\infty  \rvert\in \{0,q^{\rk(L_T)}\}$, where $L_T:=L_U \cap \pi_{\infty}\cap \pi$. 
Noting that the number of hyperplanes $\pi\ne\pi_{\infty}$ meeting $\mathcal{K}$ in at least one point is $[r-1]_{q^n}-|L_U \cap \pi|+\lvert (\mathcal{K} \cap \pi)\setminus \pi_\infty \rvert$ corresponds to the set of hyperplanes meeting $L_{U'}$ in $|L_U \cap \pi|+\lvert (\mathcal{K} \cap \pi)\setminus \pi_\infty \rvert$, which we already counted in the proof of Theorem \ref{th:intersectsizes}. Putting together the arguments of the proof of Theorem \ref{th:intersectsizes} and \eqref{eq:countK} we obtain the assertion.


\end{proof}

We end this section with the following corollary that gives a rather interesting case of the previous theorem ($r=d$), leading to an interesting geometrical object that will be classified in the next section as an hypercylinder.

\begin{corollary}
Suppose that $\pi_\infty:=\PG(r-1,q^n)$, for $r\geq 2$ and $n\geq 2$, is embedded in $\PG(r,q^n)$. Suppose that $L_{U}$ is a properly maximum $h$-scattered linear set in $\pi_\infty$ and $Q=\langle y\rangle_{\mathbb{F}_{q^n}}$ is a point in $\PG(r,q^n)\setminus\pi_\infty$. Let $U'=U \oplus \langle y\rangle_{\mathbb{F}_{q}}$ and define $\mathcal{K}:=(L_{U'}\setminus L_U)\cup (\pi_\infty \setminus L_U)$, then $\mathcal{K}$ is of type

\begin{center}
\scalebox{1}{%
$\left(  [r]_{q^n}- \left[\frac{rn}{h+1}\right]_q,
[r-1]_{q^n}-[\gamma_i]_q+ \beta_i
\right)_{r-1}$
}
\end{center}
Here  $\gamma_i:=\frac{rn}{h+1}-n+i$ for $i\in\{0,...,h\}$, and $\beta_i\in \{q^{\gamma_i},0\}$ for $i\geq 1$ or $\beta_0=q^{\gamma_0}$. Finally, we also conclude that
$|\mathcal{K}|= [r]_{q^n}-\psi\left[\frac{rn}{h+1}\right]_q+ q^{\frac{rn}{h+1}}.$\\

In particular if $q=2$ and $h=1$, we obtain that $\mathcal{K}$ is of type 
\begin{center}
\scalebox{1}{%
$\left(  [r]_{2^n}-2^\frac{rn}{h+1}+1,
[r-1]_{2^n}+1,
[r-1]_{2^n}-2^{\gamma_1}+1
\right)_{r-1}$
}
\end{center}
which in total gives $3$ possibilities. Note that $|\mathcal{K}|= [r]_{2^n}+1 $.
\end{corollary}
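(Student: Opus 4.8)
The plan is to obtain this statement as the specialization $r=d$ of Theorem~\ref{th:intersectsizes2}; in that case the vertex $S_2$ of Construction~\ref{con:genhyper} is empty, so $\mathcal{D}=L_U=L_{U_1}$ is simply a properly maximum $h$-scattered $\fq$-linear set of rank $\frac{rn}{h+1}$ in $\pi_\infty=\PG(r-1,q^n)$, and the middle family of intersection numbers of Theorem~\ref{th:intersectsizes2} (the one arising from hyperplanes of $\pi_\infty$ avoiding the vertex, whose coefficient $q^{n(r-d-1)}$ degenerates when $r=d$) simply does not occur. First I would collect the ingredients. Since $L_U$ is scattered, $|L_U|=\bigl[\frac{rn}{h+1}\bigr]_q$; by Theorem~\ref{th:inter} every hyperplane $\tau$ of $\pi_\infty$ has weight $\gamma_i:=\frac{rn}{h+1}-n+i$ with respect to $L_U$ for some $i\in\{0,\dots,h\}$, so $|\tau\cap L_U|=[\gamma_i]_q$, and by Theorem~\ref{th:intersectionhyper} (where $t_i>0$ for all $i$) every one of these $h+1$ weights is realized. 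Because $Q\notin\pi_\infty$ we have $\langle y\rangle_{\F_{q^n}}\cap\langle U\rangle_{\F_{q^n}}=\{0\}$, hence $L_{U'}\cap\pi_\infty=L_U$, and Lemma~\ref{lem:extensionpoint} shows that $L_{U'}\setminus L_U$ consists of exactly $q^{rn/(h+1)}$ affine points, each of weight one.

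Next I would read off $|\mathcal{K}|$ and the hyperplane intersections. As $\mathcal{K}$ is the disjoint union of $\pi_\infty\setminus L_U$ and the (purely affine) set $L_{U'}\setminus L_U$, we get $|\mathcal{K}|=[r]_{q^n}-\bigl[\frac{rn}{h+1}\bigr]_q+q^{rn/(h+1)}$, and $\mathcal{K}\cap\pi_\infty=\pi_\infty\setminus L_U$ gives the first listed value $[r]_{q^n}-\bigl[\frac{rn}{h+1}\bigr]_q$. For a hyperplane $\pi\ne\pi_\infty$, set $\tau=\pi\cap\pi_\infty$; this is a hyperplane of $\pi_\infty$, of weight $\gamma_i$ for some $i$, so the part of $\mathcal{K}\cap\pi$ lying in $\pi_\infty$ is $\tau\setminus L_U$, contributing $[r-1]_{q^n}-[\gamma_i]_q$ points, while the affine part of $\mathcal{K}\cap\pi$ is $(L_{U'}\setminus L_U)\cap\pi$, which by Proposition~\ref{prop:weightbyintersectioninfinity} has either $0$ or $q^{w_{L_U}(\pi)}=q^{\gamma_i}$ points. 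Hence $|\mathcal{K}\cap\pi|=[r-1]_{q^n}-[\gamma_i]_q+\beta_i$ with $\beta_i\in\{0,q^{\gamma_i}\}$, which is the second family of values.

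To justify that these are the intersection numbers (type $(\cdots)_{r-1}$, not merely a containment of types) and to pin down which $\beta_i$'s actually occur, I would use a partition argument: for a hyperplane $\tau$ of $\pi_\infty$ of weight $\gamma_i$ (which exists because $t_i>0$), the $q^n$ hyperplanes of $\PG(r,q^n)$ through $\tau$ other than $\pi_\infty$ partition the $q^{rn/(h+1)}$ affine points of $\mathcal{K}$, each meeting that set in $0$ or $q^{\gamma_i}$ points, so precisely $q^{n-i}$ of them contain an affine point. For $i=0$ all $q^n$ of them do, forcing $\beta_0=q^{\gamma_0}$; for $i\ge1$ one has $0<q^{n-i}<q^n$, so both $\beta_i=q^{\gamma_i}$ and $\beta_i=0$ are attained. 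The $q=2$, $h=1$ statement then follows from the identity $[\gamma]_2=2^{\gamma}-1$: this makes $[r-1]_{2^n}-[\gamma_i]_2+2^{\gamma_i}=[r-1]_{2^n}+1$ for every $i$ and $[r-1]_{2^n}-[\gamma_1]_2=[r-1]_{2^n}-2^{\gamma_1}+1$, while $\bigl[\frac{rn}{h+1}\bigr]_2=2^{rn/(h+1)}-1$ turns the $\pi_\infty$-value into $[r]_{2^n}-2^{rn/(h+1)}+1$ and $|\mathcal{K}|$ into $[r]_{2^n}+1$, so only three distinct values survive.

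The only point that is not pure bookkeeping is the boundary case $\gamma_0=0$, which occurs exactly when $r=h+1$ (so $L_U$ is a scattered linear set with respect to the hyperplanes, of rank $n$). There Proposition~\ref{prop:weightbyintersectioninfinity} does not apply verbatim, its hypothesis being $w_{L_U}(\Omega)=j>0$, so for a weight-$0$ hyperplane $\tau$ of $\pi_\infty$ I would argue directly, mimicking the contradiction argument inside the proof of Proposition~\ref{prop:weightbyintersectioninfinity}: for every hyperplane $\pi\supseteq\tau$ one has $\dim_{\fq}(W_\pi\cap U')\le 1$, so $(L_{U'}\setminus L_U)\cap\pi$ has exactly $1=q^{\gamma_0}$ point as soon as it is nonempty, and a count over the $q^n$ hyperplanes through $\tau$ (which share the $q^{rn/(h+1)}=q^n$ affine points of $\mathcal{K}$) shows it is always nonempty, so again $\beta_0=q^{\gamma_0}$. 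With this special case dispatched, everything else follows from the formulas already recorded in Corollary~\ref{lem:intersectioninfinity} and the proofs of Theorems~\ref{th:intersectsizes} and~\ref{th:intersectsizes2}.
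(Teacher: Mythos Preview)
Your approach is essentially the paper's: the corollary is obtained as the $r=d$ specialization of Theorem~\ref{th:intersectsizes2}, and the paper's own proof is literally the one line ``This follows directly combining Theorem~\ref{th:intersectsizes2} and Corollary~\ref{lem:intersectioninfinity}.'' Your write-up is considerably more explicit than that, in particular your handling of the boundary case $\gamma_0=0$ (where Proposition~\ref{prop:weightbyintersectioninfinity} does not apply verbatim) is a genuine detail the paper leaves implicit, and your partition count showing that both values of $\beta_i$ are attained for $i\ge 1$ makes the ``type $(\cdots)_{r-1}$'' claim rigorous.
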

\begin{proof}
This follows directly combining Theorem \ref{th:intersectsizes2} and Corollary \ref{lem:intersectioninfinity}.
\end{proof}

\section{Stability results for hypercylinders}\label{sec:stab}

When considering $q=2,d=2$ and $h=1$, in Theorem \ref{th:intersectsizes2}, we obtain examples of point sets of $\PG(r,2^n)$ with few intersection numbers with respect to the hyperplanes, more precisely we have constructed point sets $S \subseteq \PG(r,2^n)$ of size $2^{n(r-1)}+2^{n(r-2)+1}$ of type $(0,2^{n(r-2)}+2^{n(r-3)+1},2^{n(r-2)+1})_{r-1}$.
In this section we will study point sets with size $q^{r-1}+q^{r-2}+t$ in $\PG(r,q)$ of type $(0,q^{r-2}+2q^{r-3},q^{r-2}+t)_{r-1}$, under certain restrictions on $t$. Actually, it turns out that the only objects with these properties are exactly the hypercylinders, that are hyperoval cones with their vertex deleted.

We will divide the discussion in two cases: we will first analyze the case $r=3$ and then, using induction arguments, we will deal with the $r \geq 4$ case. 
\subsection{\texorpdfstring{The case $r=3$.}{The case r=3.}}
We start by giving some geometrical properties of such point sets.

\begin{theorem} \label{th:propgenkmarc} 
Let $S \subseteq \PG(3,q)$ be a set of points of type $\{0,q+2,q+t\}_2$ of size $q^2+q+t$, where $2<t \leq q+1$. Then:
\begin{itemize}
    \item[(i)]  there are no tangent lines to $S$;
    \item[(ii)] through every point of $S$ there are exactly $q^2+q$ lines that are $2$-secant and one $t$-secant line;
    \item [(iii)] all the planes through a $t$-secant line are $(q+t)$-secant planes;
    \item[(iv)] through every $2$-secant line there is exactly one $(q+t)$-secant plane and all the others are $(q+2)$-secant planes; 
    \item[(v)] the intersection of a $(q+2)$-secant plane and $S$ is a hyperoval;
    \item[(vi)] it holds that $q=2^n$, for some positive integer $n$;
    \item[(vii)] $S$ is of type $(0,2,t)_1$;
    \item[(viii)] if $\pi$ is a $(q+t)$-secant plane, then $S\cap \pi$ is a KM-arc of type $t$ in $\pi$;
    \item[(ix)] it holds that $t=2^i$, with $i \leq n$.
\end{itemize}
\end{theorem}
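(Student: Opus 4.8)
The plan is to derive all nine parts from a short list of elementary double-counting arguments on the planes through a line and on the planes and lines through a point of $S$. Note first that a plane containing a point of $S$ is not $0$-secant, so every plane through a line that meets $S$ is either $(q+2)$- or $(q+t)$-secant. For (i), suppose $\ell$ is tangent, say $\ell\cap S=\{P\}$; the $q+1$ planes through $\ell$ partition $S\setminus\{P\}$, so if $y$ of them are $(q+t)$-secant then $(q+1)^2+y(t-2)=|S|-1=q^2+q+t-1$, forcing $y(t-2)=t-q-2\le -1$, which is impossible since $y\ge 0$ and $t>2$.

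Next I would set up the two workhorse counts. Fixing $P\in S$ and counting incidences of the points of $S\setminus\{P\}$ with the $q^2+q+1$ planes through $P$ (each such point lying on $q+1$ of them) gives $(q+1)(t-2)=x(t-2)$, where $x$ is the number of $(q+t)$-secant planes through $P$; hence $x=q+1$, the remaining $q^2$ planes through $P$ are $(q+2)$-secant, and in particular $(q+2)$-secant planes exist. Similarly, for a line $\ell$ with $|\ell\cap S|=m+1\ge 2$, counting points of $S$ against the $q+1$ planes through $\ell$ shows that the number $y_\ell$ of $(q+t)$-secant planes through $\ell$ equals $1+\tfrac{q(m-1)}{t-2}$; since $0\le y_\ell\le q+1$, this already forces $|\ell\cap S|\le t$, and substituting $m=t-1$ and $m=1$ yields (iii) (all $q+1$ planes through a $t$-secant line are $(q+t)$-secant) and (iv) (exactly one $(q+t)$-secant plane through a $2$-secant line, the other $q$ being $(q+2)$-secant), once the existence of such lines is in place, which follows below.

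For (v): inside a $(q+2)$-secant plane $\pi$, the $q+1$ lines of $\pi$ through a point $P\in\pi\cap S$ partition the other $q+1$ points of $\pi\cap S$ into nonempty parts (by (i), no line of $\pi$ is tangent to $S$), so each of these lines is a $2$-secant; thus $\pi\cap S$ is a $(q+2)$-arc, i.e.\ a hyperoval, and the absence of hyperovals for $q$ odd (Segre's theorem) gives $q$ even, proving (vi). Now (vii): a line $\ell$ with $3\le|\ell\cap S|\le t-1$ cannot lie in a $(q+2)$-secant plane, since a hyperoval has no line of multiplicity $\ge 3$, so every plane through $\ell$ is $(q+t)$-secant, i.e.\ $y_\ell=q+1$; but the formula for $y_\ell$ equals $q+1$ only when $|\ell\cap S|=t$, a contradiction. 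Hence $S$ is of type $\{0,2,t\}_1$, and all three numbers occur: $2$ and $0$ arise in a hyperoval plane, while $t$ must occur because a $(q+t)$-secant plane, having $q+t>q+2$ points, cannot be an arc. Then (ii) is immediate: through $P\in S$ every line is $2$- or $t$-secant, and $a_2+a_t=q^2+q+1$ together with $a_2+(t-1)a_t=|S|-1$ gives $a_t=1$ and $a_2=q^2+q$; in particular $t$-secant lines exist, completing (iii) and (iv).

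Finally, for (viii) and (ix) I would fix a $(q+t)$-secant plane $\pi$: by (vii) its lines meet $\pi\cap S$ in $0$, $2$ or $t$ points, and repeating the computation of (ii) inside $\pi$ shows every point of $\pi\cap S$ lies on exactly one $t$-secant line of $\pi$; hence these $t$-secant lines partition $\pi\cap S$, a routine count confirms that external lines of $\pi$ remain (so $0$ also occurs as an intersection number inside $\pi$), and therefore $\pi\cap S$ is a set of $q+t$ points of type $(0,2,t)_1$ in $\pi$, i.e.\ a KM-arc of type $t$, giving (viii). The partition forces $t\mid|\pi\cap S|=q+t$, hence $t\mid q=2^n$ and $t=2^i$ with $i\le n$, which is (ix) (equivalently, the divisibility is Proposition~\ref{prop:kmarcs} applied to the KM-arc $\pi\cap S$). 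The main obstacle is (vii): everything else is bookkeeping around the two counting identities, but ruling out lines of intermediate multiplicity $3\le|\ell\cap S|\le t-1$ requires combining the rigidity of the plane-count formula for $y_\ell$ with the hyperoval structure of $(q+2)$-secant planes from (v), and getting that interplay exactly right is the crux of the argument.
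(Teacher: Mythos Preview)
Your proposal is correct and follows essentially the same strategy as the paper: both arguments rest on the same two double counts (planes through a point, planes through a secant line) together with the key observation that a line meeting $S$ in $\geq 3$ points cannot lie in a $(q+2)$-secant plane, forcing all planes through it to be $(q+t)$-secant and hence the line to be $t$-secant. The only difference is organisational: you package the line count into the formula $y_\ell=1+\tfrac{q(m-1)}{t-2}$ and prove (v)--(vii) before (ii), whereas the paper proves (ii) first (using (i) directly to exclude a $(q+2)$-plane through a $\geq 3$-secant line, via a would-be tangent) and then reads off (v)--(vii).
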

\begin{proof}
(i) By contradiction, suppose that there exists a tangent line $\ell$ to $S$ for a point $P \in S$. All the planes through $\ell$ meet $S$ in at least $q+2$ points. By counting the number of points of the intersection between these planes and $S$, we obtain that
\[
(q+1)(q+1)+1 \leq \lvert S \rvert =q^2+q+t,
\]
from which $t\geq q+2$, a contradiction. \\
(ii) \& (iii) Let $P \in S$. Suppose that no $2$-secant lines pass through $P$. By point (i), we get that $\lvert \ell \cap S \rvert \geq 3$, for every line $\ell$ through $P$. Then, since there are $q^2+q+1$ lines through $P$, we now get
\[
2(q^2+q+1)+1 \leq \lvert S \rvert=q^2+q+t,
\]
and hence $t \geq q^2+q+3$, a contradiction. This implies the existence of at least one $2$-secant line through $P$.\\
Now, suppose that every line through $P$ is a $2$-secant line. Then
\[
q^2+q+1+1= \lvert S \rvert =q^2+q+t,
\]
that is $t=2$, a contradiction. So there exists at least one line $\ell$ through $P$ which is not a $2$-secant line, hence $\lvert \ell \cap S \rvert \geq 3$. 
Now, we prove that all the planes through $\ell$ are $t$-secant planes. 
Suppose that $\ell$ is contained in a $(q+2)$-secant plane $\pi$.
Since $ \lvert S \cap (\pi \setminus \ell) \rvert \leq q-1$, through $P$ there is at least one tangent line contained in $\pi$, which contradicts (i). 
Therefore, every plane through $\ell$ is a $(q+t)$-secant plane to $S$.
Moreover, let $x = \lvert \ell \cap S \rvert$. Since the planes through $\ell$ give a partition of $S \setminus \ell$ and since each plane contains $q+t-x$ elements of $S$ that are not in $\ell$, we get that
\[
(q+1)(q+t-x)+x=q^2+q+t,
\]
hence $x=t$ and $\ell$ is a $t$-secant line. 
So, we have proved that every line through $P$ is either a $2$-secant line or a $t$-secant line.
In particular, the number of $t$-secant lines through $P$ is one. 
Indeed, suppose that there are at least two $t$-secant lines through $P$. Then 
\[
2(t-1)+q^2+q-1+1 \leq q^2+q+t,
\]
and so $t \leq 2$, a contradiction. \\
(iv) Let $\ell$ be a $2$-secant line.
Let $x$ denote the number of $(q+t)$-secant planes through $\ell$.
Then
\[
 q^2 + q + t = |S| = 2 + x (q+t-2) + (q+1-x) q,
\]
hence $x=1$. \\
(v) Let $\pi$ be a $(q+2)$-secant plane. By (ii), the lines meet $S$ either in the empty set, in $2$ points or in $t$ points.
Moreover, if there exists a $t$-secant line in $\pi$ through a point of $S \cap \pi$ then by (iii) $\pi$ is a $(q+t)$-secant plane, a contradiction. So, in $\pi$ there are only external lines and $2$-secant lines and hence $S \cap \pi$ is a hyperoval. \\
(vi) As a consequence of the previous point $q$ is even.\\
(vii) In a $(q+2)$-secant plane $\pi$ there exist external lines to $S$. Together with (ii), this implies that $S$ is of type $(0,2,t)_1$ in $\pi$. \\
(viii) Let $\ell$ be a $t$-secant line and let $\pi$ be a plane through $\ell$. By (iii), $\pi$ is a $(q+t)$-secant plane.
This means that $S'=S\cap \pi$ has size $q+t$ and by (vii) it is of type $(0,2,t)_1$ in $\pi$. Therefore, $S'$ is a KM-arc of type $t$ in $\pi$. \\
(ix) Let $\pi$ be a $(q+t)$-secant plane. By (viii), $S\cap \pi$ is a KM-arc of type $t$ in $\pi$. Then, by Proposition \ref{prop:kmarcs}, $t \mid q$ and the assertion then follows from the fact that $q=2^n$.
\end{proof}


We are now able to completely characterize such point sets in $\PG(3,q)$.

\begin{corollary}\label{cor:charr=3}
Suppose that $q \geq 4$ and let $S \subseteq \PG(3,q)$ be a set of points of type $\{0,q+2,q+t\}_2$ of size $q^2+q+t$, with $2<t \leq q+1$. Then $q$ is even, $t=q$ and $S$ is a hypercylinder.
\end{corollary}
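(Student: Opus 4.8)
The plan is to use all the structural facts from Theorem \ref{th:propgenkmarc} to force the value $t=q$; once that is established, $|S|=q^2+2q$, $S$ is of type $(0,2,q)_1$ by \ref{th:propgenkmarc}(vii), $q$ is even by \ref{th:propgenkmarc}(vi), and $q\geq 4$, so Theorem \ref{th:classconehyperoval} immediately gives that $S$ is a hypercylinder. Hence the whole problem reduces to proving $t=q$.

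First I would study the configuration of $t$-secant lines. By \ref{th:propgenkmarc}(ii) every point of $S$ lies on a unique $t$-secant line, so the $t$-secant lines partition $S$ and there are exactly $(q^2+q+t)/t$ of them. Fix one $t$-secant line $\ell$. Each of the $q+1$ planes through $\ell$ is $(q+t)$-secant by \ref{th:propgenkmarc}(iii), hence meets $S$ in a KM-arc of type $t$ by \ref{th:propgenkmarc}(viii); a short incidence count inside such a KM-arc (of the same flavour as the proof of \ref{th:propgenkmarc}(ii), using $t>2$) shows it carries exactly $q/t+1$ $t$-secant lines, so $q/t$ besides $\ell$. This accounts for $(q+1)q/t$ $t$-secant lines meeting $\ell$, which together with $\ell$ itself already exhausts the total $(q^2+q+t)/t$; therefore no $t$-secant line is skew to $\ell$. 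As $\ell$ was arbitrary, any two $t$-secant lines meet, so they are either all coplanar or all concurrent. The coplanar case is impossible, since then $S=\bigcup(\text{$t$-secant lines})$ would lie in a plane, contradicting $|S|=q^2+q+t>q^2+q+1$. Hence all $t$-secant lines pass through a common point $N$, and $N\notin S$ (otherwise the unique $t$-secant line through $N$ would be the only $t$-secant line, forcing $S$ onto a single line by \ref{th:propgenkmarc}(ii)).

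Next I would exploit $N$ via projection. Since $S$ has no tangents (\ref{th:propgenkmarc}(i)) and a $2$-secant line through $N$ would put a point of $S$ simultaneously on that line and on a $t$-secant line through $N$ — impossible, as distinct lines through $N$ meet only at $N\notin S$ — every line through $N$ is external or $t$-secant. Projecting $S$ from $N$ onto a plane $\pi$ not containing $N$ then yields a set $S'$ with $|S'|=(q^2+q+t)/t$ (the $t$-secant lines through $N$ correspond bijectively to the points of $S'$), and lines of $\pi$ correspond to planes through $N$. A $(q+2)$-secant plane through $N$ would force $t\mid q+2$, incompatible with $t\mid q$ (from \ref{th:propgenkmarc}(ix)) and $t>2$; so the planes through $N$ are external or $(q+t)$-secant, and consequently every line of $\pi$ meets $S'$ in $0$ or $q/t+1$ points. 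Thus $S'$ is a set of type $\{0,m\}_1$ in $\PG(2,q)$ with $m:=q/t+1$ and $|S'|=(m-1)q+m$, i.e.\ a maximal arc of degree $m$. The classical divisibility argument (the $q+1$ lines through a point of $S'$ are all $m$-secant, while the lines through a point outside $S'$ partition $S'$) gives $m\mid q$. Now $q=2^n$ and $t=2^i$ with $i\leq n$ by \ref{th:propgenkmarc}(vi),(ix); if $t<q$ then $q/t=2^{n-i}$ is even, so $m=2^{n-i}+1$ is an odd integer $\geq 3$ dividing $2^n$, a contradiction. Hence $t=q$, and the corollary follows from Theorem \ref{th:classconehyperoval} as explained above. (When $t=q$ one even sees directly that $S'$ is a hyperoval and $S$ is the cone over it with vertex $N$ with $N$ removed.)

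The main obstacle is the middle step: establishing concurrency of all $t$-secant lines and then correctly transporting the intersection pattern through the projection so as to recognise $S'$ as a maximal arc — everything after that is elementary divisibility. Two points deserve care: the borderline case $q/t=1$ (where the maximal-arc argument yields no contradiction but instead directly exhibits the hyperoval), and the smallest case $q=4$, where $2<t\leq q+1$ together with $t\mid q$ already forces $t=q$, so the projection argument is not even needed.
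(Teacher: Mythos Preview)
Your argument is correct, but it takes a genuinely different route from the paper. The paper's proof is a two-line application of outside machinery: by Theorem~\ref{th:propgenkmarc}(vi),(vii),(ix) the set $S$ is an even set with $t=2^i\le q$, and then the Calkin--Key--de Resmini bound (Theorem~\ref{Th:minimumsizecodeword}) forces $|S|\ge q^2+2q$, i.e.\ $t\ge q$; hence $t=q$ and Theorem~\ref{th:classconehyperoval} finishes. Your proof bypasses Theorem~\ref{Th:minimumsizecodeword} entirely and is instead structural: you show directly that the $t$-secant lines form a pencil with vertex $N\notin S$, and then the quotient from $N$ is a $\{0,q/t+1\}_1$-set of the right size, i.e.\ a maximal arc of degree $m=q/t+1$; the standard divisibility $m\mid q$ is incompatible with $m=2^{n-i}+1$ being odd unless $t=q$. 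This is longer but more self-contained (no appeal to the minimum-weight result for even sets), and it has the pleasant side effect of producing the vertex of the hypercylinder before you even know $S$ is one. Your counting of $t$-secant lines, the coplanar/concurrent dichotomy, the exclusion of $2$-secants and $(q+2)$-secant planes through $N$, and the handling of the boundary case $t=q$ are all sound.
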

\begin{proof}
By Theorem \ref{th:propgenkmarc}, we find that $q$ is even, say $q=2^n$ for some positive integer $n \geq 2$, $t=2^i$ and $S$ is of type $(0,2,2^i)_1$. This means that the intersection numbers of $S$ with respect to the lines are even and hence, by Theorem \ref{Th:minimumsizecodeword}, it follows that $q^2+q+t\geq q^2+2q$. In addition, since $t=2^i$, we have $i=n$.
Therefore, since $S$ is a point set of size $q^2+2q$ and it is of type $(0,2,q)_1$, from Theorem \ref{th:classconehyperoval} the assertion follows.
\end{proof}

\subsection{\texorpdfstring{The case $r\geq 4$.}{The case r>=4.}}
We will now deal with the point sets in $\PG(r,q)$ with $r \geq 4$. We start by deriving some geometrical properties. 

\begin{theorem} \label{th:propgenkmarcspace}
Let $S \subseteq \PG(r,q)$, with $r\geq 4$ and $q>2$, be a set of points of type $\{0,q^{r-2}+2q^{r-3},q^{r-2}+t\}_{r-1}$  of size $q^{r-1}+q^{r-2}+t$, with $2q^{r-3}< t \leq q^{r-2}+q-1$. Then the following properties are true:
\begin{itemize}
    \item[(i)] if $\pi$ is a $k$-space with $2 \leq k < r$, then $\pi \cap S = \emptyset$, or $|\pi \cap S| \geq q^{k-1}+2q^{k-2}$;
    \item[(ii)] there are no tangent lines to $S$;
    \item[(iii)] there exists at least one $k$-space that is $(q^{k-1}+2q^{k-2})$-secant, for every $k \in \{2,\ldots,r-1\}$, and at least one $2$-secant line; 
    \item[(iv)] any $(q+2)$-secant plane to $S$ meets $S$ in a hyperoval;
    \item[(v)] it holds that $q$ is even;
    \item[(vi)] all hyperplanes through an $(q^{r-3}+2q^{r-4})$-secant $(r-2)$-space are $(q^{r-2}+2q^{r-3})$-secant spaces; 
    \item[(vii)] it holds that $t=q^{r-2}$;
    \item[(viii)] every $(r-i+1)$-space through a fixed $(q^{r-i-1}+2q^{r-i-2})$-secant $(r-i)$-space is a $(q^{r-i}+2q^{r-i-1})$-secant space, for any $i \in \{1,\ldots,r-2\}$. 
\end{itemize}
\end{theorem}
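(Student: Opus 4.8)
The plan is to argue by induction on $r$, the base case $r=3$ being supplied by Theorem~\ref{th:propgenkmarc} together with Corollary~\ref{cor:charr=3}; for $r\ge 4$ we may freely use the statement of the theorem, and its classification consequence, for point sets of $\PG(r-1,q)$. It is convenient to set $\mu_k:=q^{k-1}+2q^{k-2}$ (a quantity independent of the ambient dimension), so that the two nonzero hyperplane‑intersection numbers of $S$ are $\mu_{r-1}$ and $q^{r-2}+t$, while $|S|=q^{r-1}+q^{r-2}+t$ and $\mu_r=q^{r-1}+2q^{r-2}$. Several items run in close parallel with the planar case treated in Theorem~\ref{th:propgenkmarc}; the genuinely new ingredients are a systematic use of the pencil of hyperplanes through a subspace, a small integrality refinement forced by the a priori range $2q^{r-3}<t\le q^{r-2}+q-1$, and the descent of the hypotheses to hyperplane sections (which is where the induction on $r$ feeds in).

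For (i) and (ii): if $\pi$ is a $k$-space with $\pi\cap S\ne\emptyset$, then every one of the $[r-k]_q$ hyperplanes through $\pi$ contains a point of $S$, hence meets $S$ in at least $\mu_{r-1}$ points. Writing $c=|\pi\cap S|$ and letting $b,c'$ count the hyperplanes of this pencil that are $\mu_{r-1}$-, respectively $(q^{r-2}+t)$-secant (so $b+c'=[r-k]_q$), the partition of $\PG(r,q)\setminus\pi$ by the pencil yields one linear identity relating $([r-k]_q-1)c$, $b$, $c'$ and $t$. Because $t>2q^{r-3}$, the right‑hand side is minimised when $c'=0$, and the resulting lower bound for $c$ either comfortably exceeds $\mu_k$ or, in the borderline regime (essentially $k=r-2$), lies strictly between $\mu_k-1$ and $\mu_k$, so that integrality of $c$ forces $c\ge\mu_k$; this is (i) for $2\le k\le r-2$, the case $k=r-1$ being the hypothesis on $S$. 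Specialising to $k=1$, where the same bound vastly exceeds $\mu_1=1+2/q$, excludes tangent lines, which is (ii).

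For (iii)--(v): a double and triple count of the incidence pairs (point of $S$, hyperplane) and (line of $S$, hyperplane) shows that the nonempty hyperplanes cannot all have a common intersection size (otherwise the resulting value of $|S|$ is not an integer, or else $t=[r-2]_q+1<2q^{r-3}$, both impossible); hence $\mu_{r-1}$- and $(q^{r-2}+t)$-secant hyperplanes both occur. Arguing as in the planar case one then produces a $2$-secant line and, inside a $\mu_{r-1}$-secant hyperplane section (to which the inductive hypothesis for $\PG(r-1,q)$ applies once the descent below is in place), a $\mu_k$-secant $k$-space for each $k\in\{2,\dots,r-1\}$, which is (iii). Finally, a $(q+2)$-secant plane $\pi$ has no tangent lines by (ii), and the familiar planar argument---were a line of $\pi$ to meet $S$ in at least $3$ points, the pencil of lines of $\pi$ through one of these, covering at most $q-1$ further points of $S\cap\pi$, would contain a tangent---shows $S\cap\pi$ is of type $(0,2)_1$ of size $q+2$, i.e.\ a hyperoval; by Segre's theorem $q$ is therefore even, giving (iv) and (v).

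The core of the proof is (vi)--(viii). For (vi), fix a $\mu_{r-2}$-secant $(r-2)$-space $\Sigma$ (it exists by (iii)); exactly $q+1$ hyperplanes pass through $\Sigma$, all meeting $S$, say $b$ of them $\mu_{r-1}$-secant and $c'$ of them $(q^{r-2}+t)$-secant with $b+c'=q+1$. Computing $|S|$ through this pencil and simplifying yields the clean relation $t(c'-1)=q^{r-3}(2c'-q)$. Here $c'=1$ would force $q=2$, while $c'\ge 2$ would give $(2c'-q)/(c'-1)<2$ and hence $t<2q^{r-3}$; both contradict the hypotheses, so $c'=0$, which is precisely (vi), and substituting $c'=0$ back gives $t=q^{r-2}$, which is (vii). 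In particular $|S|=\mu_r$, the case $i=1$ of (viii). The remaining cases of (viii) are obtained by induction on the codimension, each step reducing to the previous one inside a $\mu_{r-1}$-secant hyperplane $H$ containing the relevant subspace: since $\mu_k$ is dimension‑independent, applying part (viii) of the theorem to the section $S\cap H$ in $\PG(r-1,q)$ transports the rigidity down one dimension. The step I expect to be the main obstacle is exactly this descent: one must check that a $\mu_{r-1}$-secant hyperplane section genuinely satisfies the hypotheses of the theorem in $\PG(r-1,q)$---matching the parameter ranges by using the sharpened value $t=q^{r-2}$ rather than the crude bound $t\le q^{r-2}+q-1$, and identifying such sections with hypercylinders via the inductive classification---and that at each level both the required ``small'' secant subspace and an enclosing $\mu_{r-1}$-secant hyperplane are available; the integrality refinement in item (i) for $k=r-2$ is the other point that demands care.
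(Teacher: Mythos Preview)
Your treatment of (vi)--(vii) via the single pencil identity $t(c'-1)=q^{r-3}(2c'-q)$ is correct and is essentially the paper's argument, packaged slightly differently (the paper first excludes $c'\ge 1$ to get (vi), then reads off $t=q^{r-2}$ from $c'=0$). Your arguments for (iv) and (v) are also fine.

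There is a slip in your argument for (i)--(ii): the hyperplanes through a $k$-space $\pi$ with $k<r-2$ do \emph{not} partition $\PG(r,q)\setminus\pi$; each exterior point lies on $[r-k-1]_q$ of them. The double count can be salvaged as a weighted count, and the resulting bound does appear to give $c\ge\mu_k$ (with the integrality squeeze you anticipate at $k=r-2$), but this is not what the paper does. The paper instead inducts on codimension, passing from $(r-i)$-spaces to $(r-i-1)$-spaces via the $[i+1]_q$ \emph{containing} $(r-i)$-spaces, which genuinely partition the complement; this avoids the bookkeeping entirely.

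The genuine gap is your plan for (iii) and (viii). You want to induct on $r$ by applying the theorem (and its classification corollary) to a $\mu_{r-1}$-secant hyperplane section $S\cap H$. For that you need $S\cap H$ to be of type $\{0,\mu_{r-2},2q^{r-3}\}_{r-2}$ inside $H$. Item (i) gives only the lower bound $\mu_{r-2}$; it does not restrict secant $(r-2)$-spaces to the two values $\mu_{r-2}$ and $2q^{r-3}$. Indeed, a pencil count through an $(r-2)$-space $\Sigma$ (using $t=q^{r-2}$) only yields $|\Sigma\cap S|=\mu_{r-2}+c'q^{r-4}(q-2)$ with $0\le c'\le q+1$, and nothing you have proved forces $c'\in\{0,1\}$ for $\Sigma\subset H$. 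So the hypotheses needed to invoke the inductive statement are not available---you correctly flag this as ``the main obstacle'', but it is not a detail to be filled in; it is essentially the content of the paper's subsequent Theorem~\ref{th:intersectionspaces}. Even granting the descent, it would only control the $(r-i+1)$-spaces through your $\mu_{r-i}$-secant subspace that happen to lie in $H$, not all of them.

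The paper avoids all of this: there is no induction on $r$. Once (i) and (vii) are in hand, (viii) is a one-line equality count. For a $\mu_{r-i}$-secant $(r-i)$-space $\pi$, the $[i]_q$ containing $(r-i+1)$-spaces partition $S\setminus\pi$ and, by (i), each contributes at least $\mu_{r-i+1}-\mu_{r-i}$ points; the sum is already exactly $|S|=\mu_r$, so equality holds term by term. Likewise (iii) is built from the bottom up (a $2$-secant line, then a $(q+2)$-secant plane through it, then one dimension at a time), each step by showing that the ``all strictly larger'' alternative overcounts $|S|$. I would reorganise your argument along these lines and drop the induction on $r$.
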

\begin{proof}
(i) 
We prove this property by induction on the $(r-i)$-spaces.
The case $i=1$ follows from our assumptions.
Now suppose that $i \in \{1,\ldots,r-3\}$, and that all the secant $(r-i)$-spaces meet $S$ in at least $q^{r-i-1}+2q^{r-i-2}$ points.
Our aim is to prove that all the secant $(r-i-1)$-spaces meet $S$ in at least $q^{r-i-2}+2q^{r-i-3}$ points. By contradiction, suppose that there exists a $y$-secant $(r-i-1)$-space $\pi$, with $0 < y < q^{r-i-2}+2q^{r-i-3}$.
Then the $(r-i)$-spaces through $\pi$ meet $S$ in at least $q^{r-i-1}+2q^{r-i-2}$ points (by induction). These intersections give a partition of the points of $S$ not in $\pi$. This implies, by using Proposition \ref{prop:numbersubapcescontaining}, that
\[
[i+1]_q(q^{r-i-1}+2q^{r-i-2} -y)+y \leq q^{r-1}+q^{r-2}+t  
\]
and since $y \leq q^{r-i-2}+2q^{r-i-3}-1$, we get
\[2q^{r-2}+q[i]_q \leq q^{r-2}+t,\] which contradicts $t \leq q^{r-2}+q-1$. \\
(ii) We prove now that all the secant lines meet $S$ in at least two points. Again, suppose by contradiction that there exists a tangent line $\ell$ to $S$. The planes through $\ell$ intersect $S$ in at least $q+2$ points and they form a partition of the points of $S$ not in $\ell$. Similarly as before, this implies, by Proposition \ref{prop:numbersubapcescontaining}, that
\[
[r-1]_q(q+1)+1 \leq q^{r-1}+q^{r-2}+t
\]
thus $2[r-1]_q-q^{r-2} \leq t$, which is a contradiction to the fact that $t\leq q^{r-2}+q-1$ and $r \geq 4$.\\ 
(iii) Let us start by proving that through every point of $S$ there is at least one $2$-secant line.
Let $P \in S$ and suppose that there does not exist any $2$-secant line through $P$. By (ii), we have that the lines through $P$ meet $S$ in at least $3$ points. So,
\[
2[r]_q+1\leq q^{r-1}+q^{r-2}+t
\]
and hence, we find that $\frac{q^r+q^{r-1}+q-3}{q-1}-q^{r-2} \leq t$ a contradiction. 
Let $\ell$ be a $2$-secant line. We now claim that there is at least one $(q+2)$-secant plane through $\ell$. Indeed, if all the planes through $\ell$ intersect $S$ in at least $q+3$ points then 
\[
(q+1)[r-1]_q +2\leq q^{r-1}+q^{r-2}+t
\]
and thus $2[r-1]_q+1 -q^{r-2}\leq t$, a contradiction.

Now, we proceed by induction. 
Take $i \in \{2,\ldots,r-3\}$, and suppose that there exists a $(q^{r-i-2}+2q^{r-i-3})$-secant $(r-i-1)$-space $\rho$.
If no $(r-i)$-space through $\rho$ meets $S$ in $q^{r-i-1}+2q^{r-i-2}$ points, by (i) they all meet $S$ in more than $q^{r-i-1}+2q^{r-i-2}$ points, thus
\[
(q^{r-i-1}+q^{r-i-2}-2q^{r-i-3}+1)[i+1]_q+q^{r-i-2}+2q^{r-i-3} \leq q^{r-1}+q^{r-2}+t
\]
and hence $q^{r-2}+[i+1]_q \leq t$, which contradicts our hypothesis on $t$. 
So, there exists some $(q^{r-i-1}+2q^{r-i-2})$-secant $(r-i)$-space through $\rho$.\\
(iv) \& (v) Let $\pi$ be $(q+2)$-secant plane to $S$.
Since there are no tangent lines to $S$, $S\cap \pi$ is a hyperoval and $q$ has to be even.\\
(vi) Let $\pi$ be an $(r-2)$-space that is $(q^{r-3}+2q^{r-4})$-secant. By hypothesis, all hyperplanes through $\pi$ meet $S$ in either $q^{r-2}+2q^{r-3}$ or $q^{r-2}+t$ points. Suppose that there exists at least one hyperplane through $\pi$ that is $(q^{r-2}+t)$-secant. 
This implies that
\[
q(q^{r-2}+q^{r-3}-2q^{r-4})+q^{r-2}+t-(q^{r-3}+2q^{r-4})+(q^{r-3}+2q^{r-4}) \leq q^{r-1}+q^{r-2}+t
\]
and so $q^{r-2}-2q^{r-3} \leq 0$.
This is only possible if $q=2$, a contradiction. \\
(vii) Let $\pi$ be a $(q^{r-3}+2q^{r-4})$-secant $(r-2)$-space, which always exists because of (iii).
By (vi), we know that all the hyperplanes through $\pi$ are $(q^{r-2}+2q^{r-3})$-secant.
Note that they provide a partition of the points of $S$ outside $\pi$.
This implies that
\[
(q+1)(q^{r-2}+q^{r-3}-2q^{r-4})+q^{r-3}+2q^{r-4}= q^{r-1}+q^{r-2}+t,
\]
and so $t=q^{r-2}$. \\
(viii) Let $\pi$ be an $(r-i)$-space which is $(q^{r-i-1}+2q^{r-i-2})$-secant.
Then the number of $(r-i+1)$-spaces through $\pi$ equals $[i]_q$.
Since each point of $S \setminus \pi$ is contained in a unique such $(r-i+1)$-space and by (i) all $(r-i+1)$-spaces through $\pi$ meet $S$ in at least $q^{r-i}+2q^{r-i-1}$ points, we find that
\[
q^{r-1}+2q^{r-2}=|S| \geq (q^{r-i}+q^{r-i-1}-2q^{r-i-2})[i]_q+q^{r-i-1}+2q^{r-i-2} = q^{r-1}+2q^{r-2},
\]
so the above is an equality, from which it follows that all of these $(r-i)$-spaces are $(q^{r-i}+2q^{r-i-1})$-secant.
\end{proof}

Thanks to the geometry of the point sets described in the previous result, we are able to determine their possible intersection numbers with respect to the $k$-spaces.

\begin{theorem} 
\label{th:intersectionspaces}
Suppose that $q\geq 4$ and let $S \subseteq \PG(r,q)$ be a set of points of type $\{0,q^{r-2}+2q^{r-3},2q^{r-2}\}_{r-1}$ of size $q^{r-1}+2q^{r-2}$.
If $\tau$ is an $(r-i)$-space, with $1 \leq i \leq r-2$, then
\[
 |\tau \cap S| \in \{2q^{r-i-1}+c(-q^{r-i-1}+2q^{r-i-2}) \colon c \in \mathbb{Z}, -q \leq c \leq 1\} \cup \set 0.
\]
\end{theorem}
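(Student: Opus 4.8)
The statement is about intersection numbers of $k$-spaces with $S$, where $S \subseteq \PG(r,q)$ is the special point set with $t = q^{r-2}$ (so a hypercylinder, by the $r \geq 4$ analysis, or at least a set with the same combinatorial parameters). The target claim is that for an $(r-i)$-space $\tau$ with $1 \leq i \leq r-2$, the size $|\tau \cap S|$, when nonzero, lies in the arithmetic-progression-like family $\{2q^{r-i-1} + c(-q^{r-i-1} + 2q^{r-i-2}) : c \in \mathbb Z,\ -q \leq c \leq 1\}$. Note that the extreme value $c = 1$ gives $q^{r-i-1} + 2q^{r-i-2}$ (the smaller hyperplane-type number, one dimension down), and $c = -q$ gives $2q^{r-i-1} + q(q^{r-i-1} - 2q^{r-i-2}) = q^{r-i} + 2q^{r-i-1} - 2q^{r-i-1} + \ldots$, i.e.\ the full-space count $q^{r-i-1+1}+\dots$; so the endpoints of the progression correspond exactly to ``$\tau$ contained in no distinguished structure'' versus ``$\tau$ contains the vertex-direction''. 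The plan is to prove this by \emph{downward induction on the dimension}, i.e.\ induction on $i$, starting from $i = 1$ (hyperplanes), where the statement is exactly the hypothesis that $S$ is of type $\{0, q^{r-2}+2q^{r-3}, 2q^{r-2}\}_{r-1}$: indeed $c=1$ yields $q^{r-2}+2q^{r-3}$ and $c = -q$ yields $2q^{r-1} - q \cdot q^{r-2} + 2q^{r-1} \cdot \tfrac{1}{\ldots}$ — one checks $2q^{r-2} + (-q)(-q^{r-2}+2q^{r-3}) = 2q^{r-2} + q^{r-1} - 2q^{r-2} = q^{r-1}$, hmm, that is not $2q^{r-2}$, so actually for $i=1$ the allowed values are $\{q^{r-2}+2q^{r-3}, \ldots, q^{r-1}\}$ and the hypothesis says only $\{0, q^{r-2}+2q^{r-3}, 2q^{r-2}\}$ occur — consistent since $q^{r-2}+2q^{r-3} \leq 2q^{r-2} \leq q^{r-1}$ for $q \geq 2$. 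Good: the base case is covered by the hypothesis.

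\textbf{Inductive step.} Assume the claim holds for $(r-i)$-spaces and let $\tau$ be an $(r-i-1)$-space with $|\tau \cap S| = y > 0$. Consider the $[i+1]_q$ many $(r-i)$-spaces through $\tau$; these partition $S \setminus \tau$, so
\[
  \sum_{\sigma \supseteq \tau} |\sigma \cap S| \;=\; |S| + [i+1]_q \cdot y \;-\; \big([i+1]_q - \mu\big)\, y',
\]
where I must be slightly careful — the cleaner identity is $\sum_{\sigma \supseteq \tau}(|\sigma \cap S| - y) = |S| - y$, the sum running over all $[i+1]_q$ spaces $\sigma$ of dimension $r-i$ through $\tau$. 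By the inductive hypothesis each term $|\sigma \cap S| - y$ (when $|\sigma \cap S| \neq 0$, which forces $|\sigma \cap S| \geq y$ since it contains $\tau \cap S$) equals $2q^{r-i-1} - y + c_\sigma(-q^{r-i-1}+2q^{r-i-2})$ for some integer $c_\sigma \in [-q,1]$, and the ``empty'' spaces contribute $-y$... wait, a $(r-i)$-space through $\tau$ cannot be empty since it contains the nonempty $\tau \cap S$. So \emph{every} $\sigma \supseteq \tau$ satisfies the inductive formula with some $c_\sigma \in \{-q, \ldots, 1\}$. Summing and using $|S| = q^{r-1}+2q^{r-2}$ and $[i+1]_q = (q^{i+1}-1)/(q-1)$, I obtain a single linear equation expressing $\sum_\sigma c_\sigma$ in terms of $y$. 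The key point is that $\sum_\sigma c_\sigma$ ranges over a bounded interval $[-q[i+1]_q,\ [i+1]_q]$ of integers, and as $c_\sigma$ varies by $1$ the count $|\sigma\cap S|$ varies by $-q^{r-i-1}+2q^{r-i-2}$; from the equation, $y$ is an affine function of $\sum_\sigma c_\sigma$ with slope $(-q^{r-i-1}+2q^{r-i-2})/[i+1]_q$ up to the fixed shift, and the endpoints of the admissible range for $\sum c_\sigma$ translate precisely to $c \in \{-q, \ldots, 1\}$ in the target formula for $y$. So the real content is an arithmetic bookkeeping: show that the only integer solutions $y$ compatible with $\sum_\sigma c_\sigma$ being an integer in its range are exactly the values $2q^{r-i-2} + c(-q^{r-i-2}+2q^{r-i-3})$ for $c \in [-q,1]$ (the statement with $i$ replaced by $i+1$, i.e.\ dimension one lower).

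\textbf{Where the difficulty lies.} The main obstacle is the divisibility/integrality argument: from $\sum_\sigma (|\sigma\cap S|) = |S| + ([i+1]_q - 1)y$ — more precisely $\sum_\sigma |\sigma \cap S| = |S| + [i+1]_q\, y - y \cdot$(number of $\sigma$) $+$ ... I need to be careful that the sum counts each point of $S\setminus\tau$ exactly once and each point of $\tau \cap S$ exactly $[i+1]_q$ times, giving $\sum_\sigma|\sigma\cap S| = |S| - y + [i+1]_q y = |S| + q[i]_q\, y$ since $[i+1]_q - 1 = q[i]_q$. Substituting the inductive formula for each $|\sigma\cap S|$ and the value of $|S|$, I get $[i+1]_q\cdot 2q^{r-i-1} + (\sum_\sigma c_\sigma)(-q^{r-i-1}+2q^{r-i-2}) = q^{r-1}+2q^{r-2} + q[i]_q\, y$. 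I must solve for $y$ and check it lands in the claimed set; this requires a short computation dividing by $q[i]_q$ and verifying that the quotient has the right form, using $q^{r-1}+2q^{r-2} - 2q^{r-i-1}[i+1]_q = \ldots$. I expect this to reduce cleanly because of how the parameters are rigged (these are exactly hypercylinder cross-section counts), but it is the only genuinely nontrivial step — handling the case distinctions at the endpoints $c_\sigma = 1$ (so $\sigma$ meets $S$ in the minimal $q^{r-i-1}+2q^{r-i-2}$) and $c_\sigma = -q$ and making sure no spurious intermediate values of $y$ slip through. A secondary check is the base case range for $i=1$: one verifies $2q^{r-2} = 2q^{r-1} + c(-q^{r-1}+2q^{r-2})$ has integer solution $c$; indeed $c=2$ does not satisfy $c \leq 1$, so one instead parametrizes from the other end — the hypothesis value $2q^{r-2}$ should correspond to $c$ chosen so that $2q^{r-i-1} + c(-q^{r-i-1}+2q^{r-i-2})$ with $i=1$ equals $2q^{r-2}$: this forces $c(-q^{r-1}+2q^{r-2}) = 2q^{r-2}-2q^{r-1}$, i.e.\ $c = \frac{2q^{r-2}-2q^{r-1}}{-q^{r-1}+2q^{r-2}} = \frac{2q^{r-2}(1-q)}{q^{r-2}(2-q)} = \frac{2(1-q)}{2-q}$, which is an integer only for small $q$ — so I should double-check that the hypothesis type $\{0, q^{r-2}+2q^{r-3}, 2q^{r-2}\}$ is really a \emph{sub}set of the claimed set, or else reconcile by noting the theorem asserts membership in a superset and both listed nonzero values ($c=1$ gives $q^{r-2}+2q^{r-3}$; $2q^{r-2}$ must be some other allowed $c$) genuinely lie in $\{2q^{r-2}+c(-q^{r-2}+2q^{r-3})\}$ for $i=1$ wait that has $r-i-1 = r-2$, so for $i=1$ the set is $\{2q^{r-2} + c(-q^{r-2}+2q^{r-3}) : -q \leq c \leq 1\}$, which \emph{does} contain $2q^{r-2}$ (at $c=0$) and $q^{r-2}+2q^{r-3}$ (at $c=1$). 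So the base case is immediate, and my earlier slip was using the wrong exponent. With that fixed, the proof is: base case $i=1$ from the hypothesis, then the partition-counting induction above, with the arithmetic verification as the crux.
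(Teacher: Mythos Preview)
Your plan has the right shape --- downward induction on dimension via a partition count --- and the base case $i=1$ is indeed immediate from the hypothesis. The gap is precisely at the step you flagged as ``the only genuinely nontrivial step'': that arithmetic does \emph{not} reduce cleanly.

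If you sum over all $[i+1]_q$ of the $(r-i)$-spaces $\sigma\supset\tau$ and set $C=\sum_\sigma c_\sigma$, your counting identity rearranges to
\[
q[i]_q\bigl(y-2q^{\,r-i-2}\bigr)=(q-2)\,q^{\,r-i-2}\bigl(q^{i}-C\bigr),
\]
so that
\[
y-2q^{\,r-i-2}=E\cdot\frac{C-q^{i}}{[i]_q},\qquad E=-q^{\,r-i-2}+2q^{\,r-i-3}.
\]
Thus the desired conclusion $y\in 2q^{\,r-i-2}+E\,\mathbb Z$ requires $[i]_q\mid C-q^{i}$, i.e.\ $C\equiv 1\pmod{[i]_q}$. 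For $i=1$ this is vacuous (and your step then coincides with the paper's base case $i=2$), but for $i\ge 2$ nothing in your argument forces it: integrality of $y$ only gives $[i]_q\mid(q-2)(C-1)$, and since $\gcd([i]_q,q-2)=\gcd(2^{i}-1,q-2)$ can exceed~$1$ (for instance $q=8$, $i=2$ gives $\gcd(9,6)=3$), you cannot cancel the factor $q-2$. Counting \emph{globally} over all $(r-i)$-spaces through $\tau$ therefore loses exactly the divisibility you need.

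The paper avoids this by using strong induction and counting \emph{locally}: it fixes one $(r-i+1)$-space $\sigma\supset\tau$ (whose intersection size is already known to have the form $2q^{\,r-i}+\bar c\cdot q\,D$ by the hypothesis one level up) and counts only the $q+1$ many $(r-i)$-spaces through $\tau$ inside $\sigma$. The same partition identity then reads
\[
q\,y = 2q^{\,r-i-1}+\Bigl(\textstyle\sum_\pi c_\pi-\bar c\,q\Bigr)D,
\]
and because $D=qE$, division by $q$ yields $y=2q^{\,r-i-2}+aE$ with $a\in\mathbb Z$ automatically; the bound $a\le 1$ is then read off from Theorem~\ref{th:propgenkmarcspace}\,(i). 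So the missing idea in your plan is to localise the count inside a single space of one dimension higher, which requires carrying the induction hypothesis at two consecutive levels rather than one.
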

\begin{proof}
Let's start by proving the result when $i=2$. Let $\tau$ be a $j$-secant $(r-2)$-space. Let $m$ be the number of hyperplanes through $\tau$ of $\PG(r,q)$ that are $(q^{r-2}+2q^{r-3})$-secant. So, by counting arguments, we find that
\[
m(q^{r-2}+2q^{r-3}-j)+(q+1-m)(2q^{r-2}-j)+j=q^{r-1}+2q^{r-2},
\]
from which it follows that
\[
m(-q^{r-3}+2q^{r-4})=-q^{r-2}+j.
\]
This means that $-q^{r-3}+2q^{r-4}$ divides $-q^{r-2}+j$, thus $-q^{r-3}+2q^{r-4}$ divides $-2q^{r-3}+j$. Therefore, there exists an integer $c$ such that $j=2q^{r-3}+c(-q^{r-3}+2q^{r-4})$. By (i) of Theorem \ref{th:propgenkmarcspace} we also have 
that $c \leq 1$.\\
Now we use an inductive argument.
Suppose that the assertion is true for all the $(r-g)$-spaces with $g\leq i$, for some $i \in \{2,\ldots,r-3\}$. 
Then we want to prove the assertion for $g=i+1$.
In particular, for every secant $(r-i+1)$-spaces, its intersection number with $S$ is in the set
 $\{2q^{r-i}+c(-q^{r-i}+2q^{r-i-1}) \colon c \in \mathbb{Z}, c \leq 1\}$, and all the secant $(r-i)$-space have possible intersection numbers in the set $\{2q^{r-i-1}+c(-q^{r-i-1}+2q^{r-i-2}) \colon c \in \mathbb{Z}, c \leq 1\} $.
We claim that 
\[
 |\tau \cap S| \in \{2q^{r-i-2}+c(-q^{r-i-2}+2q^{r-i-3}) \colon c \in \mathbb{Z}, c \leq 1\}, 
\]
for every secant $(r-i-1)$-space $\tau$. First, assume that $\tau$ is a $j$-secant $(r-i-1)$-space. This means that there exists some $\overline{c} \in \mathbb{Z}$ such that $\tau$ is contained in a $(2q^{r-i}+\overline{c}(-q^{r-i}+2q^{r-i-1}))$-secant $(r-i+1)$-space $\sigma$. 
Denote the number of $(r-i)$-spaces through $\tau$ that are $(2q^{r-i-1}+\alpha(-q^{r-i-1}+2q^{r-i-2}))$-secant and contained in $\sigma$ by $m_\alpha$. As before, by counting the size of the intersection between $(r-i)$-spaces through $\tau$ and $S$ we obtain
 \[
 \sum_{\alpha} m_{\alpha} (2q^{r-i-1}+{\alpha}(-q^{r-i-1}+2q^{r-i-2})-j)+j=2q^{r-i}+\overline{c}(-q^{r-i}+2q^{r-i-1}), 
 \]
and since $\sum_{\alpha} m_{\alpha}=q+1$, we have that
\[
\begin{array}{rl}
qj & = 2q^{r-i-1}+\sum_{\alpha} m_{\alpha} {\alpha}(-q^{r-i-1}+2q^{r-i-2})-\overline{c}(-q^{r-i}+2q^{r-i-1}) \\
& = 2q^{r-i-1}+(\sum_{\alpha} m_{\alpha} {\alpha}-\overline{c}q)(-q^{r-i-1}+2q^{r-i-2})
\end{array}\] 
and so
\[
j=2q^{r-i-2}+a(-q^{r-i-2}+2q^{r-i-3}),
\]
for some integer $-q\leq a \leq 1$. 
\end{proof}

As a consequence of Theorem \ref{th:propgenkmarcspace}, we can focus solely on the case in which $q$ is even. We now are able to determine all the possible intersection sizes with the lines (note that the lines were not covered in the above theorem).

\begin{corollary} \label{cor:sisevenset}
Let $S \subseteq \PG(r,q)$, with $q$ even and $q\geq 4$, be a set of points of type $\{0,q^{r-2}+2q^{r-3},2q^{r-2}\}_{r-1}$ of size $q^{r-1}+2q^{r-2}$. Then $S$ is of type $(0,2,q)_1$. 
\end{corollary}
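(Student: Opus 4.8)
The plan is to argue by induction on $r$. For $r=3$, $S$ has type $\{0,q+2,2q\}_2$ and size $q^2+2q$, so setting $t=q$ places us exactly in the hypotheses of Theorem~\ref{th:propgenkmarc} (indeed $2<t\le q+1$ since $q\ge 4$), and part~(vii) of that theorem gives at once that $S$ is of type $(0,2,q)_1$. From now on assume $r\ge 4$. By Theorem~\ref{th:propgenkmarcspace}, $q$ is even, $S$ has no tangent lines, and every $(q+2)$-secant plane — one exists by part~(iii) — is a hyperoval by part~(iv); in particular the values $0$ and $2$ already occur as line-intersection numbers. It remains to show that every secant line meets $S$ in exactly $2$ or exactly $q$ points.

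The idea is to pass to a hyperplane. First, every secant line $\ell$ lies in a $(q^{r-2}+2q^{r-3})$-secant hyperplane: otherwise all $[r-1]_q$ hyperplanes through $\ell$ would have size $0$ or $2q^{r-2}$, and since $\ell$ meets $S$ they would all have size $2q^{r-2}$. Double-counting the incidences of $S\setminus\ell$ with these hyperplanes (each point off $\ell$ lies in exactly $[r-2]_q$ of them) then forces $|\ell\cap S|=2q^{r-2}-q[r-2]_q=2q^{r-2}-[r-1]_q+1$, which for $r\ge 4$, $q\ge 4$ exceeds $q+1$ — impossible for a line. Fix such a hyperplane $\pi$, so that $S\cap\pi$ is a set of size $q^{(r-1)-1}+2q^{(r-1)-2}$ inside $\pi\cong\PG(r-1,q)$.

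The crux is the claim that $S\cap\pi$ is again of the restrictive type $\{0,\,q^{(r-1)-2}+2q^{(r-1)-3},\,2q^{(r-1)-2}\}_{(r-1)-1}$ in $\pi$; granting this, the induction hypothesis applied to $S\cap\pi$ shows it is of type $(0,2,q)_1$ there, and since every secant line lies in some such $\pi$ this finishes the proof (all three values $0,2,q$ then occur). To prove the claim, let $\rho\subseteq\pi$ be an $(r-2)$-space; by Theorem~\ref{th:intersectionspaces} (with $i=2$) the value $w:=|\rho\cap S|$ is $0$ or one of $q^{r-4}\big((j+2)q-2j\big)$, $-1\le j\le q$, and the intermediate ones $1\le j\le q-1$ must be excluded. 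The $q+1$ hyperplanes of $\PG(r,q)$ through $\rho$ partition $S\setminus\rho$; if $w>0$ none of them is external (each contains $\rho\cap S\ne\emptyset$), so writing $m$ and $b=q+1-m$ for the numbers of them of size $q^{r-2}+2q^{r-3}$ and $2q^{r-2}$, the partition identity gives $w=q^{r-2}-m\,q^{r-4}(q-2)$, whence $j=q-m$. As $\pi$ itself is a $(q^{r-2}+2q^{r-3})$-secant hyperplane through $\rho$ we have $m\ge 1$, and $w$ is one of the two admissible positive values precisely when $j\in\{-1,0\}$, i.e.\ when $b=q+1-m\le 1$. So the claim reduces to: \emph{no $(r-2)$-subspace of a $(q^{r-2}+2q^{r-3})$-secant hyperplane has two distinct $2q^{r-2}$-secant hyperplanes of $\PG(r,q)$ through it.}

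This last statement is the genuine obstacle. I expect it to require a closer look at the $2q^{r-2}$-secant hyperplane sections, which should turn out to be symmetric differences of two hyperplanes of $\PG(r-1,q)$ (two "generators" meeting in an $(r-3)$-space); then two such hyperplanes through a common $\rho$ that also lies in a third, $(q^{r-2}+2q^{r-3})$-secant hyperplane would either over-count $|S|$ or contradict Theorem~\ref{th:propgenkmarcspace}(vi) through the $(r-3)$-space where they meet. Everything else — the base case, the production of a good hyperplane through a line, the pencil-counting and parity bookkeeping above (using that $q$ is a power of $2$ with $q\ge 4$), and the final assembly of the three line-intersection numbers — is routine.
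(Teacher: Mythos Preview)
Your inductive setup is sound through the pencil identity $j=q-m$, but the argument breaks exactly where you yourself flag it, and the proposed patch is circular. To invoke the induction hypothesis on a $(q^{r-2}+2q^{r-3})$-secant hyperplane $\pi$ you must exclude $(r-2)$-subspaces $\rho\subset\pi$ with $b\ge 2$, equivalently with $|\rho\cap S|\ge 3q^{r-3}-2q^{r-4}$. Your plan is to first show that every $2q^{r-2}$-secant hyperplane meets $S$ in the symmetric difference of two of its own hyperplanes; but that is precisely the generator structure of the hypercylinder, and nothing in Theorems~\ref{th:propgenkmarcspace} or~\ref{th:intersectionspaces} gives it to you before you know the line type. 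In particular, those results do not by themselves rule out an $(r-2)$-space inside $\pi$ with intersection size $3q^{r-3}-2q^{r-4}$ (this corresponds to $m=q-1$, $b=2$, perfectly consistent with the pencil count), so the induction does not close as written.

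The paper avoids both induction and any hyperplane-section structure. It works directly at the level of lines: for a $j$-secant line $\ell$, pick any solid $\overline\tau\supset\ell$, use Theorem~\ref{th:intersectionspaces} to constrain the sizes of $\overline\tau\cap S$ and of the planes through $\ell$ inside $\overline\tau$, and count those planes. A short computation gives $(q-2)\mid 2(j-2)$, whence $j\in\{2,\,q/2+1,\,q\}$ since $2\le j\le q+1$. The spurious value $q/2+1$ is then killed by a separate global count of planes through such a line in $\PG(r,q)$, which forces at least one $(q+2)$-secant plane to contain it, contradicting Theorem~\ref{th:propgenkmarcspace}(iv). This is shorter and needs no structural information about big hyperplane sections.
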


\begin{proof}
Let $\ell$ be a $j$-secant line to $S$ with $2\leq j \leq q+1$. By Theorem  \ref{th:intersectionspaces}, we have that 
\[ |\pi \cap S| \in \{2q+c(-q+2) \colon c \in \mathbb{Z},-q \leq c \leq 1\}, \]
for every plane $\pi$ and 
\[
 |\tau \cap S| \in \{2q^{2}+c(-q^{2}+2q) \colon c \in \mathbb{Z},-q \leq c \leq 1\},
\]
for every $3$-space $\tau$. This implies that there exists some $\overline{c}$ such that $\ell$ is contained in a $(2q^{2}+\overline{c}(-q^{2}+2q))$-secant $3$-space $\overline{\tau}$.
Denote by $m_{\alpha}$ the number of planes through $\ell$ that are $(2q+{\alpha}(-q+2))$-secant and contained in $\overline{\tau}$. So we get
\[
 \sum_{\alpha} m_{\alpha} (2q+{\alpha}(-q+2)-j)+j=2q^{2}+\overline{c}(-q^{2}+2q),
\]
and since $\sum_{\alpha} m_{\alpha}=q+1$, we have that
\[
2q-jq=\sum_{\alpha} m_{\alpha} {\alpha}(q-2)+\overline{c}(-q^{2}+2q)
\]
and 
\[
2q-jq=\left(\sum_{\alpha} m_{\alpha} {\alpha}-q\overline{c}\right)(q-2).
\]
Therefore $q-2$ divides $jq-2q$. Since $q$ is a power of $2$ and $\frac{q-2}2$ is odd, we find that $q-2$ divides $2j-4$. In this way $j=2+a(\frac{q-2}{2})$, for some integer $a$. Since $2 \leq j \leq q+1$, we get that $j \in \{2,q/2+1,q\}$. Now suppose that there exists a $(q/2+1)$-secant line $s$. By Theorem \ref{th:intersectionspaces}, any plane meets $S$ in $q+2+\alpha(q-2)$ points, for some $\alpha$ with $1 \leq \alpha \leq 1+q$.
Let $g_\alpha$ be the number of $q+2+\alpha(q-2)$-secant planes through $s$, for any $\alpha \in \{0,\ldots,q+1\}$. Clearly $\sum_{\alpha=0}^{q+2} g_{\alpha}=[r-1]_q$ from which we obtain that
\[
\sum_{\alpha=0}^{q+1}g_{\alpha}(q+2+\alpha(q-2)-\frac{q+2}{2} )+\frac{q+2}{2}=q^{r-1}+2q^{r-2}.
\]
This implies that 
\[
\sum_{\alpha=0}^{q+1}g_\alpha \alpha=\frac{(q+2)(q^{r-2}-1)}{2(q-1)} \geq \sum_{\alpha=1}^{q+1}g_\alpha =[r-1]_q-g_0
\]
and hence $g_0 \geq 1$. It follows that through $s$ there is at least a $(q+2)$-secant plane, this is a contradiction to (iv) of Theorem \ref{th:propgenkmarcspace}. This shows that $S$ is of type $\{0,2,q\}_1$. Now, it is easy to see that not all of the lines through a point of $S$ can be $2$-secant lines and not all of them can be $q$-secant lines, due to the size of $S$. Therefore, $S$ is of type $(0,2,q)_1$.
\end{proof}

So, we have reduced the problem of studying our point sets to study minimum size even sets with some other geometrical assumptions. These latter sets can be characterized as follows.

\begin{theorem} \label{th:classificationsam}
Let $S \subseteq \PG(r,q)$ be an even set of size $q^{r-1}+2q^{r-2}$. Assume that there exists at least one $(q+2)$-secant plane and that every solid through a $(q+2)$-secant plane is a hypercylinder. Then $S$ is a hypercylinder.  
\end{theorem}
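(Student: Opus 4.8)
The plan is to induct on $r$, with the base case $r = 3$ already supplied by Theorem~\ref{th:classconehyperoval} (applied to the unique $(q+2)$-secant plane's ambient solid, which is the whole space when $r=3$). So assume $r \geq 4$ and that the statement holds for all smaller dimensions. Fix a $(q+2)$-secant plane $\pi_0$, which meets $S$ in a hyperoval $\mathcal{O}_0$ by hypothesis. The first step is to identify the \emph{vertex} $V$ of the prospective hypercylinder: I expect $V$ to be recovered as the intersection of the $(r-3)$-spaces that ``should'' be the deleted vertices inside the various solids through $\pi_0$. Concretely, for each solid $\Sigma \supseteq \pi_0$, the hypothesis says $\Sigma \cap S$ is a hypercylinder with some vertex point $v_\Sigma \in \Sigma \setminus \pi_0$; I would first show all these points $v_\Sigma$ are forced to agree, or more precisely that they span a fixed $(r-3)$-space $V$ skew to a complement of $\pi_0$, by a counting/consistency argument comparing two solids through $\pi_0$ inside a common $4$-space.

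The second step is to pass to quotients. Projecting from a well-chosen subspace $W$ of the candidate vertex $V$ (say of dimension $r-4$) onto a complementary $\PG(3,q)$ should send $S$ to a set $\bar S$ satisfying the hypotheses of the $r=3$ case: one needs that $\bar S$ has size $q^2 + 2q$ (this should follow because the fibres of the projection over points of $\bar S \setminus \bar\pi_0$ have constant size $q^{r-3}$, using the type conditions and Theorems~\ref{th:propgenkmarcspace} and \ref{th:intersectionspaces}), and is of type $(0,2,q)_1$ (inherited from Corollary~\ref{cor:sisevenset} after checking the projection behaves well on lines). Then the $r=3$ conclusion identifies $\bar S$ as a hypercylinder over a hyperoval $\bar{\mathcal{O}}$. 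Pulling back, this shows $S$ is contained in the cone with vertex $W$ over $\bar{\mathcal{O}}$'s preimage structure; combined with the size count $|S| = q^{r-1} + 2q^{r-2}$, which matches the size of a hypercylinder exactly, containment forces equality once one checks $S$ really is a full cone minus a subspace and not some proper subset. An alternative to the quotient argument, which may be cleaner, is a direct induction: slice $S$ by a hyperplane $H$ through $\pi_0$; by Step 1's consistency argument $H \cap S$ is an even set in $H \cong \PG(r-1,q)$ of size $q^{r-2} + 2q^{r-3}$ still satisfying the solid-hypothesis, so by induction $H \cap S$ is a hypercylinder; then show the vertex $(r-4)$-spaces of $H \cap S$, as $H$ ranges over hyperplanes through $\pi_0$, fit together into a single $(r-3)$-space, and that $S$ is the union of the corresponding lines.

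The main obstacle will be Step~1: proving that the local vertices coming from the hypothesis on solids are globally consistent, i.e.\ that they genuinely assemble into one $(r-3)$-dimensional vertex rather than wobbling. This is where the even-set structure and the precise intersection numbers from Theorem~\ref{th:intersectionspaces} must be used in force — for instance, to show that a line joining a point of $S \setminus \pi_0$ to the ``wrong'' vertex candidate would create a $k$-space with a forbidden intersection size, or a tangent line, contradicting part~(ii) of Theorem~\ref{th:propgenkmarcspace}. A secondary technical point is ensuring the chosen projection/slicing subspace is generic enough to avoid the vertex and to keep the type conditions intact; this should be handled by a dimension count showing such subspaces exist, since $S$ and its forbidden configurations occupy only a bounded-degree locus. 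Once the vertex is pinned down, the identification of $S$ as the full cone minus $V$ is a routine comparison of sizes using the known cardinality $q^{r-1}+2q^{r-2}$ of a hypercylinder.
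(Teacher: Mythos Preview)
Your inductive/projection framework is plausible, but there is a concrete gap at precisely the point you flag. The hypotheses of Theorem~\ref{th:classificationsam} are only that $S$ is an even set of the stated size and that solids through $(q+2)$-secant planes are hypercylinders; there is \emph{no} assumption on the hyperplane intersection type. You therefore cannot invoke Theorem~\ref{th:propgenkmarcspace}, Theorem~\ref{th:intersectionspaces}, or Corollary~\ref{cor:sisevenset}, since all of those presuppose the type $\{0,q^{r-2}+2q^{r-3},q^{r-2}+t\}_{r-1}$ condition. (The absence of tangent lines is available, but it follows immediately from $S$ being even, not from Theorem~\ref{th:propgenkmarcspace}.) This removes the tools you name for the vertex-consistency step. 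Your hyperplane-slicing induction can in fact be set up---one checks $|H\cap S|=q^{r-2}+2q^{r-3}$ for every hyperplane $H\supset\pi_0$ by partitioning $H\setminus\pi_0$ into solids through $\pi_0$, and the solid hypothesis descends to $H$---but after applying the inductive hypothesis you must still glue the $(r-4)$-dimensional vertices $V_H$ into a single $(r-3)$-space, and you have not said how; the induction merely relocates the consistency problem one dimension up.

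The paper's proof is direct rather than inductive and resolves the consistency step by a self-contained argument using only the even-set hypothesis. One collects the vertex points $v_\Sigma$ of the $[r-2]_q$ solids $\Sigma\supset\pi$ into a set $\tau$ and shows $\tau$ is a subspace by proving that any line $v=\langle V_1,V_2\rangle$ through two of its points lies entirely in $\tau$. To do this, take $Q\in\pi\cap S$ and set $\sigma=\langle v,Q\rangle$. For each line $l\subset\sigma$ through $Q$, one argues that the vertex of the hypercylinder $\langle\pi,l\rangle\cap S$ must lie on $l$: otherwise some plane $\rho\subset\langle\pi,l\rangle$ through $l$ missing that vertex meets $\pi$ in a line, so $\rho\cap S$ is a hyperoval and $l$ is $2$-secant, contradicting the structure of $\sigma$ along the $q$-secant lines $\langle V_i,Q\rangle$. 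Hence $\sigma\setminus S$ has exactly $q+1$ points and, since $S$ is even, meets every line of $\sigma$ in an odd number of points; a blocking set of size $q+1$ is a line, necessarily $v$. Thus every point of $v$ is a solid-vertex, $\tau$ is an $(r-3)$-space, and $S$ is the hypercylinder with vertex $\tau$ and base $\pi\cap S$.
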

\begin{proof}
Let $\pi$ be a $(q+2)$-secant plane. By hypothesis, we have that $S \cap \Sigma$ is a hypercilinder for each solid $\Sigma$ through $\pi$.
Every such hypercilinder has a point $V$ as vertex.
Let $\tau$ denote the set consisting of these vertices.Then $|\tau|$ corresponds to the number of $3$-spaces through a plane, that is $\theta_{r-3}$.
We will show that $\tau$ is a subspace, necessarily of dimension $r-3$.
Then $S$ must be a hypercilinder with vertex $\tau$ and as base $\pi \cap S$.
To show that $\tau$ is a subspace, we will prove that if a line contains more than 1 point of $\tau$, it is completely contained in $\tau$.
To this aim take two distinct vertices $V_1$ and $V_2$ in $\tau$.
Denote the line through $V_1$ and $V_2$ by $v$.
Since $V_1$ and $V_2$ lie in distinct solids through $\pi$, $v$ cannot intersect $\pi$.
Take a point $Q \in \pi \cap S$ and consider the plane $\sigma = \langle v, Q\rangle $.
Then for $i=1,2$, it must be that $\langle V_i,Q \rangle \cap S = \langle V_i,Q \rangle \setminus \{V_i\}$.
Take a line $l$ in $\sigma$ through $Q$, with $l \neq \langle V_i,Q\rangle $ for $i=1,2$.
Consider the solid $\Sigma = \vspan{\pi,l}$.
Suppose that the vertex $V_3$ of this solid does not lie on $l$.
Then there exists some plane $\rho$ in $\Sigma$ through $l$, that doesn't contain $V_3$.
Then $\rho$ intersects $\pi$ in a line.
Note that $\rho \cap S$ cannot be empty, since it contains $Q$, and cannot be the symmetric difference of two lines, since it doesn't contain $V_3$.
\\
Note that for every plane $\rho$ that intersect $\pi$ in a line, $\rho \cap S$ is either empty, a hyperoval, or the symmetric difference of two lines.
This follows immediately from the fact that $\pi$ and $\rho$ span a solid, this solid must intersect $S$ in a hypercilinder, and a hypercilinder in $\PG(3,q)$ intersects every plane in the empty set, a hyperoval or the symmetric difference of two lines.
\\
This means that $S \cap \rho$ is a hyperoval and hence $l$ is a $2$-secant line.
Since $\sigma$ intersects $\rho$ in a line, this means that $\sigma \cap S$ is either empty or a hyperoval or the symmetric difference of two lines.
All of these options are impossible, given how $S$ intersects the lines $\langle {V_i,Q}\rangle $.
Thus, $l$ must contain $V_3$.
As a consequence, every line $l$ through $Q$ in $\sigma$ intersects $S$ in $q$ points, and the point $l \setminus S$ is a vertex of a solid through $\pi$.
Then the points of $\sigma \setminus S$ are set of $q+1$ points, intersecting every line of $\sigma$ in an odd number of points since $S$ is an even set.
In particular, they are a blocking set of size $q+1$, hence a line, necessarily $v$.
This proves that every point of $v$ is a vertex, hence that $\tau$ is a subspace.
\end{proof}

We can finally conclude our characterization result.

\begin{corollary}\label{cor:hypercylinder}
Suppose that $r\geq 3$ and $q\geq 4$. Let $S \subseteq \PG(r,q)$ be a set of points of type $\{0,q^{r-2}+2q^{r-3},q^{r-2}+t\}_{r-1}$ of size $q^{r-1}+q^{r-2}+t$, with $2q^{r-3}< t \leq q^{r-2}+q-1$. Then $q$ is even, $t=q^{r-2}$ and $S$ is a hypercylinder.
\end{corollary}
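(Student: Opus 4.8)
The plan is to \emph{assemble} the results of this section; essentially no new computation is needed, and in particular no induction on $r$ is required (all the recursive content sits inside Theorem~\ref{th:propgenkmarcspace}, Theorem~\ref{th:intersectionspaces} and Corollary~\ref{cor:sisevenset}, which we are free to use). First I would dispose of the case $r=3$: there the hypothesis says exactly that $S\subseteq\PG(3,q)$ is of type $\{0,q+2,q+t\}_2$ of size $q^2+q+t$, so Corollary~\ref{cor:charr=3} applies directly and gives that $q$ is even, $t=q$ and $S$ is a hypercylinder. So from now on assume $r\ge 4$.

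For $r\ge 4$, Theorem~\ref{th:propgenkmarcspace} applies verbatim; its parts (v) and (vii) give that $q$ is even and $t=q^{r-2}$. Hence $S$ has size $q^{r-1}+2q^{r-2}$ and is of type $\{0,q^{r-2}+2q^{r-3},2q^{r-2}\}_{r-1}$, and Corollary~\ref{cor:sisevenset} shows that $S$ is of type $(0,2,q)_1$. In particular $S$ is an even set of the minimum possible size $q^{r-1}+2q^{r-2}$ of Theorem~\ref{Th:minimumsizecodeword}. The goal is now to feed $S$ into Theorem~\ref{th:classificationsam}, for which two things must be checked: that a $(q+2)$-secant plane exists, and that every solid through a $(q+2)$-secant plane meets $S$ in a hypercylinder. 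The first is the case $k=2$ of part (iii) of Theorem~\ref{th:propgenkmarcspace}.

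The main step is the second. Let $\pi$ be a $(q+2)$-secant plane and let $\Sigma\supseteq\pi$ be a solid. The key observation is to apply part (viii) of Theorem~\ref{th:propgenkmarcspace} with $i=r-2$: then the $(r-i)$-space appearing there is precisely our plane $\pi$ (a $(q^{r-i-1}+2q^{r-i-2})=(q+2)$-secant $2$-space) and the $(r-i+1)$-space is the solid $\Sigma$, so the conclusion is that $\Sigma$ is $(q^{r-i}+2q^{r-i-1})=(q^2+2q)$-secant. By part (iv) of the same theorem $\pi\cap S$ is a hyperoval, so inside $\Sigma\cong\PG(3,q)$ the set $S\cap\Sigma$ already carries an external line and a $2$-secant line; together with the fact that $S$ is of type $(0,2,q)_1$ this forces the line-type of $S\cap\Sigma$ to be contained in $\{0,2,q\}_1$ but different from $\{0,2\}_1$ (a set of the latter type in $\PG(3,q)$ has size $q^2+q+2\ne q^2+2q$ when $q\ge 4$), hence equal to $(0,2,q)_1$. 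Then Theorem~\ref{th:classconehyperoval} identifies $S\cap\Sigma$ as a hypercylinder. With both hypotheses verified, Theorem~\ref{th:classificationsam} yields that $S$ is a hypercylinder, which finishes the proof.

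The only delicate point is the index bookkeeping in the last step: one has to notice that the specialization $i=r-2$ in Theorem~\ref{th:propgenkmarcspace}(viii) is exactly what pins the size of a solid through a $(q+2)$-secant plane to $q^2+2q$, which is in turn exactly what makes the classification Theorem~\ref{th:classconehyperoval} applicable, and then that the degenerate line-types on such a solid are excluded purely by the bound $q\ge 4$. Beyond this the argument is a pure assembly of the section's results, so I do not expect a genuine obstacle.
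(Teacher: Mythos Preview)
Your proposal is correct and follows essentially the same route as the paper: split off $r=3$ via Corollary~\ref{cor:charr=3}, then for $r\ge 4$ use Theorem~\ref{th:propgenkmarcspace} (v), (vii), (iii), (viii) together with Corollary~\ref{cor:sisevenset} to feed into Theorem~\ref{th:classconehyperoval} and finally Theorem~\ref{th:classificationsam}. Your explicit verification that $S\cap\Sigma$ really has type $(0,2,q)_1$ (ruling out $\{0,2\}_1$ by the size count $q^2+q+2\neq q^2+2q$) is a bit more careful than the paper, which simply invokes Theorem~\ref{th:classconehyperoval} directly, but this is a matter of exposition rather than a different argument.
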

\begin{proof}
The case $r=3$ is ruled out by Corollary \ref{cor:charr=3}.
By (v), (vii) and (viii) of Theorem \ref{th:propgenkmarcspace}, it follows that $q$ is even, $S$ has size $q^{r-1}+2q^{r-2}$ and there is at least one $(q+2)$-secant plane to $S$. Moreover, by Corollary \ref{cor:sisevenset}, we know that $S$ is an even set and by (iv) of Theorem \ref{th:propgenkmarcspace}, we have that all the the solids $\Sigma$ through a $(q+2)$-secant plane meet $S$ in $q^2+2q$ points and hence by Theorem \ref{th:classconehyperoval} we obtain that $\Sigma\cap S$ is a hypercylinder. The assertion now follows by Theorem \ref{th:classificationsam}.
\end{proof}

\section{Some consequences on linear codes}\label{sec:codes}

\subsection{Linear codes}

A \textbf{linear code} $\C$ is any $\F_q$-subspace of $\F_{q}^n$.
If $\C$ has dimension $k$, we say that $\C$ is an $[n, k]_q$-\textbf{code}.
A \textbf{generator matrix} $G$ for $\C$ is a $(k \times n)$-matrix over $\F_q$ whose rows form a basis of $\C$, i.e.
\[ \C=\{xG \colon x\in \F_q^k\}. \]

The \textbf{Hamming distance} in $\F_q^n$ is defined as follows: $d(x,y)$ is the number of entries in which $x$ and $y$ differ, with $x,y \in \F_q^n$.
So we can define the \textbf{minimum distance} of a code $\C$ as follows 
\[d=d(\C)=\min\{d(x,y) \colon x,y \in \C, x \ne y\}.\]
A linear code in $\fq^n$ of dimension $k$ and minimum distance $d$ is said to be an $[n, k, d]_q$-code.
The \textbf{weight} $w(x)$ of a vector $x \in \fq^n$ is the number of positions of the non-zero components of $x$. 
When the code is linear the minimum weight of a code coincides with its minimum distance. 
A code $\C$ is said to be \textbf{nondegenerate} if there does not exist a position in which all the codewords of $\C$ have zero as entry.
Let $A_i$ be the number of codewords of $\C$ with Hamming weight $i$. 
Then the \textbf{weight distribution} of $\C$ is $(A_0,\ldots,A_n)$.

Two $[n,k,d]_q$-codes $\C_1$ and $\C_2$ are said to be (monomially) \textbf{equivalent} if 
\[\C_2=\C_1 PD=\{cPD \colon c\in \C_1\},\] 
for some permutation matrix $P \in \F_q^{n \times n}$ and invertible diagonal matrix $D \in \F_q^{n\times n}$. 

The set of equivalence classes of nondegenerate $[n,k,d]_q$--codes will be denoted by $\classcode$.

Furthermore, $ \mathcal C$ is called \textbf{projective} if in one (and thus in all) generator matrix $G$ of $\mathcal C$ no two columns are proportional. Note that a projective code is necessarily nondegenerate.

Linear codes have a natural geometric representation via projective systems. A \textbf{projective $[n, k, d]_q$--system} $(\mathcal{P},\mathrm{m})$ is a multiset, where $\mathcal{P} \subseteq \mathrm{PG}(k - 1, q)$ is a set of points  not
all of which lie in a hyperplane, and $\mathrm{m}: \PG(k-1,q) \rightarrow \mathbb{N}$ is the multiplicity function, with $\mathrm{m}(P)>0$ if and only if $P \in \mathcal{P}$ and $\sum_{P \in \mathcal{P}}\mathrm{m}(P)=n$. The parameter $d$ is defined by
\[n - d = \max\left\{\sum_{P \in H} \mathrm{m}(P): H \leq \PG(k-1,q), \dim(H)=k-2\right\},\]
see  e.g.\ \cite{vladut2007algebraic}.

Similar to linear codes, we will say that two projective $[n,k,d]_q$--systems $(\mathcal{P},\mathrm{m})$ and $(\mathcal{P}',\mathrm{m}')$  are \textbf{equivalent} if there exists $\phi \in \mathrm{PGL}(k,q)$ mapping $\mathcal{P}$ to $\mathcal{P}'$ that preserves the multiplicities of the points, i.e.
$\mathrm{m}(P)=\mathrm{m}'(\phi(P))$ for every $P \in \PG(k-1,q)$. The set of all equivalence classes of projective $[n,k,d]_q$ systems will be denoted by $\mathcal{P}[n,k,d]_q$.

\subsection{Linear codes and projective systems}\label{sec:projsystlincod}

In this subsection we briefly describe the well-known connection between projective systems and nondegenerate codes.
Indeed, there exists a one-to-one correspondence between $\classcode$ and $\classsystem$. 

For a given nondegenerate $[n,k,d]_q$--code $\C$, consider a generator matrix $G \in \F_{q}^{k \times n}$. Let $g_i$ be the columns of $G$ and define the set $\mathcal{P}=\{\langle g_1\rangle_{\fq},\ldots,\langle g_n\rangle_{\fq}\} \subseteq \PG(k-1,q)$. Moreover, define the multiplicity function $\mathrm{m}$ as
\[
\mathrm{m}(P)=\lvert \{i: P=\langle g_i\rangle_{\fq}\} \rvert.
\]
Then, a projective system associated with $\C$ is $(\mathcal{P},\mathrm{m})$. 
On the other hand, given a projective $[n,k,d]_q$--system $(\mathcal{P},\mathrm{m})$, we can construct a matrix $G$ by taking as columns representatives of each point $P_i$ in $\mathcal{P}$, counted with multiplicity $\mathrm{m}(P_i)$. Let $\C$ be the space generated by the rows of $G$. We say that $\C$ is a code associated with $(\mathcal{P},\mathrm{m})$. 
This naturally yields the one-to-one correspondence between $\classcode$ and $\classsystem$.

Also, the weight distribution of the code associated with a projective system is related to its intersection pattern with hyperplanes. More precisely, let $\C$ be a nondegenerate $[n,k,d]_q$-code. Let $G \in \F_q^{k \times n}$ be a generator matrix of $\C$. Let $g_i$, for $i\in \{1,\ldots,n\}$ be the $i$-th column of $G$. The weight of a codeword $vG \in \C$ is
\[
w(vG)=n-\lvert \{i:v \cdot g_i =0\} \rvert .
\]
Equivalently, consider $(\mathcal{P},\mathrm{m})$ the multiset where $\mathcal{P}=\{\langle g_1\rangle_{\fq},\ldots,\langle g_n\rangle_{\fq}\} \subseteq \PG(k-1,q)$ and with multiplicity function
\[
\mathrm{m}(P)=\lvert \{i: P=\langle g_i\rangle_{\fq}\} \rvert.
\]
We have that the codeword $vG$ has weight $w$ if and only if
the projective hyperplane
\[v_1x_1 +v_2x_2 + \cdots + v_kx_k = 0\]
contains $n  - w$ points of $(\mathcal P, \mathrm{m})$.
So, the number of distinct nonzero weights of $\C$ corresponds to the distinct sizes of the intersections of $({\mathcal P},\mathrm{m})$ with all the hyperplanes.
We refer to \cite{vladut2007algebraic} for more details on this connection.

We now give the following definition. Consider any hypercylinder $S\subseteq \PG(r,q)$. Then $S$ is a projective $[q^{r-1}+2q^{r-2},r+1,q^{r-1}]_q$-system.
A code associated with $S$ will be called a \textbf{hypercylinder code} and it is a $[q^{r-1}+2q^{r-2},r+1,q^{r-1}]_q$-code whose nonzero weights are $q^{r-1},q^{r-1}+q^{r-2}-2q^{r-3}$ and $q^{r-1}+2q^{r-2}$. In particular, these are \emph{three-weight codes} and linear codes with \emph{few weights} have been deeply investigated for their coding theoretical properties and for their applications in secret sharing \cite{carlet2005linear}, authentication codes \cite{ding2005coding}, association schemes \cite{calderbank1984three} and strongly regular graphs \cite{calderbank1986geometry}.

We can now prove a stability result on the family of hypercylinder codes.
This means that codes whose length and weight spectrum are close enough, in some sense, to those of a hypercylinder code must be hypercylinder codes.

\begin{theorem}
Let $q\geq 4$ be a prime power and let $t$ and $r\geq 3$ be positive integers such that $2q^{r-3}<t \leq q^{r-2}+q-1$. Let $\mathcal{C}$ be a projective $[q^{r-1}+q^{r-2}+t,r+1]_q$-code with nonzero weights in $\{q^{r-1}, q^{r-1}+t-2q^{r-3},q^{r-1}+q^{r-2}+t\}$. Then $q$ is even, $t=2q^{r-2}$ and $\mathcal{C}$ is a hypercylinder code.
\end{theorem}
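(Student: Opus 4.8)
The plan is to translate the statement into geometry via the correspondence of Section \ref{sec:projsystlincod} and then to apply Corollary \ref{cor:hypercylinder}; all the real content sits in the combinatorial results of Section \ref{sec:stab}. Write $n=q^{r-1}+q^{r-2}+t$ for the length. Since $\mathcal{C}$ is a projective $[n,r+1]_q$-code, fix a generator matrix $G\in\fq^{(r+1)\times n}$; its columns are pairwise non-proportional and, as $G$ has rank $r+1$, they span $\fq^{r+1}$. Hence the associated projective system has trivial multiplicity function and is a genuine set of points $S\subseteq\PG(r,q)$ with $|S|=n$ that is not contained in any hyperplane.

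Next I would read off the intersection pattern of $S$ with the hyperplanes. By the dictionary between weights and hyperplane sections, every hyperplane $H$ of $\PG(r,q)$ meets $S$ in $n-w$ points for some weight $w$ of $\mathcal{C}$; since $S$ spans $\PG(r,q)$ we have $|S\cap H|<n$, so $n-|S\cap H|$ is a \emph{nonzero} weight of $\mathcal{C}$ and therefore belongs to $\{q^{r-1},\,q^{r-1}+t-2q^{r-3},\,q^{r-1}+q^{r-2}+t\}$. Subtracting from $n$ shows that $S$ is of type $\{0,\,q^{r-2}+2q^{r-3},\,q^{r-2}+t\}_{r-1}$. Together with $r\geq 3$, $q\geq 4$, $|S|=q^{r-1}+q^{r-2}+t$, and the bound $2q^{r-3}<t\leq q^{r-2}+q-1$, these are exactly the hypotheses of Corollary \ref{cor:hypercylinder} (which itself rests on Theorems \ref{th:propgenkmarcspace}, \ref{th:intersectionspaces}, Corollary \ref{cor:sisevenset} and Theorem \ref{th:classificationsam}).

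Applying Corollary \ref{cor:hypercylinder} then gives that $q$ is even, $t=q^{r-2}$ (so that $n=q^{r-1}+2q^{r-2}$ indeed matches the size of a hypercylinder), and $S$ is a hypercylinder. Since the correspondence between equivalence classes of nondegenerate projective codes and of projective systems is a bijection, $\mathcal{C}$ is, up to monomial equivalence, a code associated with a hypercylinder, i.e.\ a hypercylinder code; in particular it is a $[q^{r-1}+2q^{r-2},r+1,q^{r-1}]_q$-code with nonzero weights $q^{r-1}$, $q^{r-1}+q^{r-2}-2q^{r-3}$ and $q^{r-1}+2q^{r-2}$. There is no serious obstacle here beyond bookkeeping; the only two points requiring a word of care are that projectivity of $\mathcal{C}$ is what forces the associated system to be an honest point set (so the classification of Section \ref{sec:stab} applies verbatim rather than to a multiset), and that ``nonzero weight'' corresponds to ``hyperplane section of size strictly smaller than $n$'', which holds because $S$ spans the ambient space.
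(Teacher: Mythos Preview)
Your proposal is correct and follows essentially the same route as the paper: pass from the projective code to its associated projective system, observe that projectivity makes this a genuine point set $S$ in $\PG(r,q)$ of size $q^{r-1}+q^{r-2}+t$ and of type $\{0,q^{r-2}+2q^{r-3},q^{r-2}+t\}_{r-1}$, and then invoke Corollary \ref{cor:hypercylinder}. Your write-up is somewhat more explicit than the paper's (in particular your remarks on why projectivity forces a simple point set and why all hyperplane sections correspond to nonzero weights are well placed), but the underlying argument is identical.
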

\begin{proof}
Since $\C$ is a projective code, it is also nondegenerate and hence we can consider a projective system $S$ associated to $\C$. Moreover, $S$ turns out to be a set of $q^{r-1}+q^{r-2}+t$ points in $\PG(r,q)$ of type $\{0,q^{r-2}+2q^{r-3},q^{r-2}+t\}_{r-1}$. Then by Corollary \ref{cor:hypercylinder} we have that $q$ is even, $t=q^{r-2}$ and $S$ is a hypercylinder. Therefore, $\C$ is a hypercylinder code.
\end{proof}

Up to the action of $\mathrm{PGL}(r+1,q)$, we may always assume that the (deleted) vertex of a hypercylinder has equations $x_0=x_1=x_2=0$, and we can take the plane $x_3 = \ldots = x_r = 0$ as the plane containing the basis of the cone.
Then it is easy to see that two hypercylinders with basis the hyperovals $\mathcal{H}_1$ and $\mathcal{H}_2$, respectively, are $\mathrm{PGL}(r+1,q)$-equivalent if and only if $\mathcal{H}_1$ and $\mathcal{H}_2$ are $\mathrm{PGL}(3,q)$-equivalent.
Therefore, we have the following result.

\begin{corollary}
Two hypercylinder codes are equivalent if and only if the bases of two related hypercylinders are $\mathrm{PGL}(3,q)$-equivalent.
In particular, there is a hypercylinder code for any dimension greater than or equal to $4$.
\end{corollary}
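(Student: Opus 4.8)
The plan is to carry the geometric observation made in the paragraph immediately preceding the corollary through the correspondence between codes and projective systems from Section~\ref{sec:projsystlincod}. First I would note that a hypercylinder code $\C$ is, by definition, a code associated with a hypercylinder $S\subseteq\PG(r,q)$, and that it is \emph{projective}: the points of $S$ all have multiplicity one and are pairwise distinct, so a generator matrix obtained from $S$ has pairwise non-proportional columns. Since Section~\ref{sec:projsystlincod} provides a bijection between equivalence classes of nondegenerate codes and equivalence classes of projective systems, and since for projective codes this specialises to the statement that two projective codes are monomially equivalent precisely when the associated point sets are $\mathrm{PGL}$-equivalent, one obtains: two hypercylinder codes $\C_1,\C_2$ are equivalent if and only if the hypercylinders $S_1,S_2$ to which they are associated are $\mathrm{PGL}(r+1,q)$-equivalent. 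Here one may assume $S_1,S_2\subseteq\PG(r,q)$ for the same $r$ over the same field $\F_q$, because equivalent codes have the same field and the same length $q^{r-1}+2q^{r-2}$, and the latter determines $r$.

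Next I would invoke the normalisation recorded just before the statement: up to $\mathrm{PGL}(r+1,q)$ the deleted vertex of a hypercylinder is the $(r-3)$-space $x_0=x_1=x_2=0$ and its hyperoval base lies in the plane $x_3=\cdots=x_r=0$. Hence $S_1$ and $S_2$ are $\mathrm{PGL}(r+1,q)$-equivalent if and only if their bases $\mathcal H_1,\mathcal H_2$ are $\mathrm{PGL}(3,q)$-equivalent: a collineation sending $S_1$ to $S_2$ must send the vertex of $S_1$ to the vertex of $S_2$ and hence induces a collineation of the base planes carrying $\mathcal H_1$ to $\mathcal H_2$, while conversely any collineation of the base planes mapping $\mathcal H_1$ to $\mathcal H_2$, extended by the identity on the common vertex, gives a collineation of $\PG(r,q)$ mapping $S_1$ onto $S_2$. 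Chaining the two equivalences proves the first assertion. The only point requiring a short argument is that the vertex is recovered intrinsically from the point set $S$ inside $\PG(r,q)$ --- the ``easy to see'' step of the preceding paragraph --- and I expect this to be the sole (mild) obstacle, the rest being a formal translation.

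Finally, for the ``in particular'' clause I would use that hyperovals exist in $\PG(2,q)$ for every even prime power $q$. Fixing such a $q$, say $q=4$, and an arbitrary integer $r\ge 3$, any hyperoval $\mathcal H\subseteq\PG(2,q)$ yields a hypercylinder in $\PG(r,q)$, and hence a hypercylinder code of length $q^{r-1}+2q^{r-2}$ and dimension $r+1$. Letting $r$ range over all integers $\ge 3$ then produces a hypercylinder code in every dimension $\ge 4$, as claimed.
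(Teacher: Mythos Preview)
Your proposal is correct and follows essentially the same approach as the paper: translate code equivalence to $\mathrm{PGL}(r+1,q)$-equivalence of the associated hypercylinders via the projective-system correspondence, then use the normalisation stated just before the corollary to reduce to $\mathrm{PGL}(3,q)$-equivalence of the bases, and finally appeal to the existence of hyperovals for even $q$ to get all dimensions $\ge 4$. Your write-up is in fact more explicit than the paper's in spelling out why the vertex is preserved and in checking that equivalent codes share the same $r$, but the underlying argument is the same.
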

\begin{proof}
The first part follows from the fact that the equivalence codes can be read as the $\mathrm{PGL}(r+1,q)$-equivalence of the associated hypercylinders, which is equivalent the bases being $\mathrm{PGL}(3,q)$-equivalent.
The last part follows from the fact that for any even value of $q$, there always exists at least one hyperoval in $\PG(2,q)$, namely a conic together with its nucleus.
\end{proof}

The classification of hyperovals is a quite hard and well-studied problem. A complete classification is known only in $\PG(2,q)$ when $q\leq 64$, see \cite{vandendriessche2019classification} and the references therein.

\subsection{Linear rank metric codes and systems}

Rank metric codes were introduced by Delsarte \cite{de78} in 1978 as sets of bilinear forms and they have been intensively investigated in recent years because of their several applications; see e.g.\ \cite{gorla2021rank,sheekeysurvey,polverino2020connections}.
Instead of using framework introduced by Delsarte, in this section we will be interested in rank metric codes in $\F_{q^n}^\ell$.
In this context, the \textbf{rank} (weight) $w(v)$ of a vector $v=(v_1,\ldots,v_{\ell}) \in \F_{q^n}^{\ell}$ is defined as the dimension of the vector space generated over $\F_q$ by its entries, i.e $w(v)=\dim_{\fq} (\langle v_1,\ldots, v_{\ell}\rangle_{\fq})$. 

A \textbf{(linear) rank metric code} $\C$ is an $\F_{q^n}$-subspace of $\F_{q^n}^{\ell}$ endowed with the rank distance, defined as follows
\[
d(x,y)=w(x-y),
\]
where $x, y \in \F_{q^n}^{\ell}$. 

We say that $\C$ is an $[\ell,k,d]_{q^n/q}$ code (or $[\ell,k]_{q^n/q}$ code) if $k=\dim_{\F_{q^n}}(\C)$ and $d=d(\C)$ is its minimum distance, defined analogously as for the Hamming metric. 

Moreover, we say that two rank metric codes $\C,\C' \subseteq \F_{q^n}^{\ell}$ are \textbf{(linearly) equivalent} if and only if there exists a matrix $A \in \mathrm{GL}(\ell,q)$ such that
$\C'=\C A=\{vA : v \in \C\}$. 
The codes we will consider are \textbf{nondegenerate}, that is the columns of any generator matrix of $\C$ are $\fq$-linearly independent. Denote by $ \mathfrak{C}[\ell,k,d]_{q^n/q}$ the set of all $[\ell,k,d]_{q^n/q}$ rank metric codes in $\F_{q^n}^{\ell}$.

The analogue of the projective systems in the rank metric are the systems. 
An $[\ell,k,d]_{q^n/q}$ \textbf{system} $U$ is an $\F_q$-subspace of $\F_{q^n}^k$ of dimension $\ell$, such that
$ \langle U \rangle_{\F_{q^n}}=\F_{q^n}^k$ and
$$ d=\ell-\max\left\{\dim_{\F_q}(U\cap H) \mid H \textnormal{ is an $\F_{q^n}$-hyperplane of }\F_{q^n}^k\right\}.$$
Moreover, two $[\ell,k,d]_{q^n/q}$ systems $U$ and $U'$ are \textbf{equivalent} if there exists an $\F_{q^n}$-isomorphism $\varphi\in\mathrm{GL}(k,q^n)$ such that
$$ \varphi(U) = U'.$$
We denote the set of equivalence classes of $[\ell,k,d]_{q^n/q}$ systems by $\mathfrak{U}[\ell,k,d]_{q^n/q}$.
The following result describes the relation between rank metric codes and systems.

\begin{theorem}[{\cite{Randrianarisoa2020ageometric}}] \label{th:connection}
Let $\C$ be a nondegenerate $[\ell,k,d]_{q^n/q}$ rank metric code and let $G$ be a generator matrix for $\C$.
Let $U \subseteq \F_{q^n}^k$ be the $\F_q$-span of the columns of $G$.
The rank weight of an element $x G \in \C$, with $x \in \F_{q^n}^k$ is
\begin{equation}\label{eq:relweight}
w(x G) = \ell - \dim_{\fq}(U \cap x^{\perp}),\end{equation}
where $x^{\perp}=\{y \in \F_{q^n}^k \colon x \cdot y=0\}.$ In particular,
\begin{equation} \label{eq:distancedesign}
d=\ell - \max\left\{ \dim_{\fq}(U \cap H)  \colon H\mbox{ is an } \F_{q^n}\mbox{-hyperplane of }\F_{q^n}^k  \right\}.
\end{equation}
\end{theorem}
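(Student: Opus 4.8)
The plan is to read off the rank weight as the $\fq$-dimension of the image of a single evaluation map and then apply the rank--nullity theorem over $\fq$. Write $g_1,\dots,g_\ell\in\fqn^k$ for the columns of $G$, so that $U=\vspan{g_1,\dots,g_\ell}_{\fq}$. Since $\C$ is nondegenerate, these columns are $\fq$-linearly independent, whence $\dim_{\fq}(U)=\ell$. For $x\in\fqn^k$ the codeword $xG$ has entries $x\cdot g_1,\dots,x\cdot g_\ell$, so by definition its rank weight is $w(xG)=\dim_{\fq}\vspan{x\cdot g_1,\dots,x\cdot g_\ell}_{\fq}$.

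The central step I would take is to introduce the evaluation map $\phi_x\colon U\to\fqn$, $u\mapsto x\cdot u$. Since the standard bilinear form is $\fqn$-bilinear, for fixed $x$ this map is $\fqn$-linear in $u$, and in particular $\fq$-linear on $U$. Because $\phi_x$ is $\fq$-linear and $U$ is $\fq$-spanned by the $g_i$, its image equals $\phi_x(U)=\vspan{x\cdot g_1,\dots,x\cdot g_\ell}_{\fq}$, so $\dim_{\fq}(\operatorname{im}\phi_x)=w(xG)$; meanwhile its kernel is $\{u\in U\colon x\cdot u=0\}=U\cap x^{\perp}$. Applying rank--nullity to $\phi_x$ over $\fq$ then yields
$$\ell=\dim_{\fq}(U)=\dim_{\fq}(\ker\phi_x)+\dim_{\fq}(\operatorname{im}\phi_x)=\dim_{\fq}(U\cap x^{\perp})+w(xG),$$
which rearranges to \eqref{eq:relweight}.

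For the minimum-distance formula, I would first observe that the rows of $G$ are $\fqn$-linearly independent (as $\dim_{\fqn}\C=k$), so $x\mapsto xG$ is injective and the nonzero codewords are exactly those $xG$ with $x\neq 0$. Minimising $w(xG)=\ell-\dim_{\fq}(U\cap x^{\perp})$ over nonzero codewords thus amounts to maximising $\dim_{\fq}(U\cap x^{\perp})$ over $x\in\fqn^k\setminus\{0\}$. Since $x\mapsto x^{\perp}$ induces a bijection between the nonzero vectors of $\fqn^k$ modulo $\fqn^{*}$-scaling and the $\fqn$-hyperplanes of $\fqn^k$, and $U\cap x^{\perp}$ depends only on the hyperplane $x^{\perp}$, this maximum equals $\max\{\dim_{\fq}(U\cap H)\colon H \text{ an } \fqn\text{-hyperplane of }\fqn^k\}$, which gives \eqref{eq:distancedesign}.

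The argument is in essence a one-line application of rank--nullity, so there is no genuine obstacle; the only points that require care are that the linear map and all dimensions must be taken over the small field $\fq$ rather than $\fqn$, and that the nondegeneracy hypothesis is precisely what guarantees $\dim_{\fq}(U)=\ell$, so that the codeword weight and the intersection dimension sum to $\ell$.
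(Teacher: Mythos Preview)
Your proof is correct and is the standard rank--nullity argument for this result. Note, however, that the paper does not give its own proof of this theorem: it is quoted directly from \cite{Randrianarisoa2020ageometric} as a background result, so there is no in-paper proof to compare against.
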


Actually, the above result allows us to give a one-to-one correspondence between equivalence classes of nondegenerate $[\ell,k,d]_{q^n/q}$ codes and equivalence classes of $[\ell,k,d]_{q^n/q}$ systems, see \cite{Randrianarisoa2020ageometric}.
The system $U$ and the code $\C$ as in Theorem \ref{th:connection} are said to be \textbf{associated}.

Suppose that $U$ defines a cone with basis a properly maximum $h$-scattered linear set as in Construction \ref{con:genhyper}. Clearly, $U$ is a system in $\F_{q^n}^{r}$ with parameters $\left[ \frac{dn}{h+1}+n(r-d), r, \overline{d} \right]_{q^n/q}$, where 
\begin{align*}
 \overline{d}= \frac{dn}{h+1}+n(r-d) -
 \max\left\{ n(r-d)+\frac{dn}{h+1}-n+i \colon i \in \{0,\ldots,h\} \right\}
 = n-h
\end{align*}
by Corollary \ref{lem:intersectioninfinity}.
Therefore, by Theorem \ref{th:connection} a code associated with $U$ is a nondegenerate $\left[ \frac{dn}{h+1}+n(r-d), r, n-h \right]_{q^n/q}$ code whose possible weights are
\[ n-i\,\,\text{with}\,\, i \in \{0,\ldots,h\}.\]
Therefore, the maximum number of distinct weights that $\C$ may have is $2(h+1)$, and hence when $h$ is very small with respect to the length of the code $\C$ then $\C$ has \emph{few weights}.

Another class of interesting rank metric codes arise from the construction explored in Section \ref{sec:constr1}.
Indeed, let $U_1$ be as in Theorem \ref{th:intersectsizes} and let $\mathcal{C}$ be an associated code. Then $U_1$ is a system in $\fqn^{r+1}$ with parameters $\left[  \frac{dn}{h+1}+n(r-d)+1,r,1 \right]_{q^n/q}$ since
\[ d(\mathcal{C})= \frac{dn}{h+1}+n(r-d)+1 -n(r-d)+\frac{dn}{h+1}=1. \]
As a consequence of Remark \ref{rk:weighthyperconstrB} and Theorem \ref{th:connection}, the weight distribution of $\mathcal{C}$ is
 $\{ 1 \} \cup \{ n-h, n-h+1, \dots, n+1 \}$.

\section{Conclusions and open problems}\label{sec:final}

In this paper we investigated cones having as basis a properly maximum $h$-scattered linear set in a complementary subspace to the vertex. First we determined the intersection sizes of such cones with the hyperplanes. Then we analyzed two constructions of point sets both having as part at infinity a cone with basis a properly maximum $h$-scattered linear set. For both the constructions we were able to determine their intersections with the hyperplanes. 
As an instance of the second construction we obtained hypercylinders for which we provided a stability result.
Then we applied our result to codes in both the Hamming and the rank metric. Indeed, we constructed codes with few weights and we provided a stability result for the Hamming metric codes associated with hypercylinders.
We now list some open problems/questions related to the results of this paper.

\begin{itemize}
    \item It is a natural question to ask whether or not there exists a generalization of KM-arcs in projective spaces. Indeed, when considering $q=2,d=2$ and $h=1$, in Theorem \ref{th:intersectsizes}, we obtain examples of point sets $S \subseteq \PG(r,2^n)$, of size $2^{n(r-1)}+2^{n(r-2)+1}$ of type $(0,2^{n(r-2)}+2^{n(r-3)+1},2^{n(r-2)+1})_{r-1}$ and it looks exactly like what we would imagine as a generalization of KM-arcs in projective spaces. But when relaxing the conditions on the size of the point set and the intersections with the hyperplanes, we obtained again a hypercylinder. 
    \item Are there any other families of linear sets for which constructions from Sections \ref{sec:constr1} and \ref{sec:constr2} give families of point sets with few intersection numbers with respect to the hyperplanes? Note that this is a certainly non trivial task since to follow our approach we both need the weight distribution of the hyperplanes with respect to the linear set and the intersection numbers with the hyperplanes. When considering scattered linear sets we just need one of the two but, as for the cone case, in general this is not enough.
    \item We have provided a stability result for hypercylinders. It would be nice to prove that every even set in $\PG(n,q)$, $q$ even, of size $(q+2)q^{n-2}$ is a hypercylinder, or to find other examples of such sets.
    One reason why this is interesting is that these sets are equivalent to the minimum weight codewords of the binary dual code of points and lines in $\PG(n,q)$, see \cite{calkin1999minimumweight}.
\end{itemize}


\section*{Acknowledgments}

We would like to thank Jan De Beule, Sam Mattheus and Olga Polverino for fruitful discussions. The third and the fourth authors are very grateful for the hospitality of the Department of Mathematics and Data Science, Vrije Universiteit Brussel, Brussel, Belgium, where the third author was a visiting PhD student for 2 months and the fourth author was a visiting researcher for 1 month during the development of this research.
The third and the last authors were supported by the project ``VALERE: VAnviteLli pEr la RicErca" of the University of Campania ``Luigi Vanvitelli'' and by the Italian National Group for Algebraic and Geometric Structures and their Applications (GNSAGA - INdAM).

\bibliographystyle{abbrv}
\bibliography{biblio}

\end{document}